\newcolumntype{C}{>{$}c<{$}}  
\newcolumntype{L}{>{$}l<{$}} 
\theoremstyle{plain}
\newtheorem{thm}{Theorem}
\newtheorem{lm}[thm]{Lemma}
\newtheorem{cor}[thm]{Corollary}
\newtheorem{prop}[thm]{Proposition}
\newtheorem*{main}{Main Theorem}
\theoremstyle{definition}
\newtheorem{de}[thm]{Definition}
\newtheorem{exs}[thm]{Examples}
\newtheorem{re}[thm]{Remark}
\DeclareMathOperator{\NN}{\mathbb{N}}
\DeclareMathOperator{\ZZ}{\mathbb{Z}}
\DeclareMathOperator{\PP}{\mathbb{P}}
\DeclareMathOperator{\A}{A}
\DeclareMathOperator{\B}{B}
\DeclareMathOperator{\C}{C}
\DeclareMathOperator{\D}{D}
\DeclareMathOperator{\Sym}{Sym}
\DeclareMathOperator{\Diag}{Diag}
\DeclareMathOperator{\Gr}{Gr}
\DeclareMathOperator{\id}{id}
\DeclareMathOperator{\rk}{rk}
\DeclareMathOperator{\cha}{char}
\DeclareMathOperator{\tr}{tr}
\let\span\relax\DeclareMathOperator{\span}{span}
\let\mod\relax\DeclareMathOperator{\mod}{~mod~}
\DeclareMathOperator{\pr}{pr}
\DeclareMathOperator{\grad}{grad}
\DeclareMathOperator{\GL}{GL}
\DeclareMathOperator{\SL}{SL}
\let\O\relax\DeclareMathOperator{\O}{O}
\DeclareMathOperator{\Sp}{Sp}
\DeclareMathOperator{\g}{\mathfrak{g}}
\DeclareMathOperator{\h}{\mathfrak{h}}
\DeclareMathOperator{\gl}{\mathfrak{gl}}
\let\sl\relax\DeclareMathOperator{\sl}{\mathfrak{sl}}
\let\o\relax\DeclareMathOperator{\o}{\mathfrak{o}}
\let\sp\relax\DeclareMathOperator{\sp}{\mathfrak{sp}}
\def\iddots{\mathinner{\mkern1mu\raise\p@
    \vbox{\kern7\p@\hbox{.}}\mkern2mu
    \raise4\p@\hbox{.}\mkern2mu\raise7\p@\hbox{.}\mkern1mu}}
\begin{document}
\title{Noetherianity up to conjugation of locally diagonal inverse limits} 
\author{Arthur Bik}
\address{Universit\"at Bern, Mathematisches Institut, Alpeneggstrasse 22,
3012 Bern, Switzerland}
\email{arthur.bik@math.unibe.ch}
\subjclass[2010]{13E99; 14L30; 17B45}
\keywords{Noetherianity; locally finite Lie algebras; classical groups}
\begin{abstract}
\textit{We prove that the inverse limit of the sequence dual to a sequence of Lie algebras is Noetherian up to the action of the direct limit of the corresponding sequence of classical algebraic groups when the sequence of groups consists of diagonal embeddings. We also classify all conjugation-stable closed subsets of the space of $\NN\times\NN$ matrices.}
\end{abstract}
\thanks{The author was partially supported by the NWO Vici grant entitled {\em Stabilisation in Algebra and Geometry}.}
\maketitle

Throughout this paper, we work over an infinite field $K$. Consider a sequence of groups
$$\begin{tikzcd}
G_1 \arrow{r} & G_2 \arrow{r} & G_3 \arrow{r} & \dots
\end{tikzcd}$$
together with a sequence of finite-dimensional vector spaces over $K$
$$\begin{tikzcd}
V_1 & \arrow{l} V_2 & \arrow{l} V_3 & \arrow{l} \dots
\end{tikzcd}$$
 such that $V_i$ is a representation of $G_i$ and the map $V_{i+1}\rightarrow V_i$ is $G_i$-equivariant for all $i\in\NN$. Then the direct limit $G$ of the sequence of groups naturally acts on the inverse limit $V$ of the sequence of vector spaces. A subset $X$ of $V$ is Zariski-closed if it is the inverse limit of a sequence of Zariski-closed subsets $X_i\subseteq V_i$. Now one can ask the following question. Given a descending sequence
$$
V\supseteq X^{(1)}\supseteq X^{(2)}\supseteq X^{(3)}\supseteq \dots
$$
of Zariski-closed $G$-stable subsets of $V$, is there always a $j\in\NN$ such that $X^{(i)}=X^{(j)}$ for all $i\geq j$?\bigskip

If the answers is yes, then the space $V$ is called $G$-Noetherian. See \cite{hillar-sullivant,draisma-eggermont,eggermont} for examples of such spaces. The easiest example of a space $V$ that is not \mbox{$G$-Noetherian} is given by an infinite-dimenional vector space acted on by the trivial group. Recently it was proven \cite{draisma} that polynomial functors of finite degree are Noetherian. Such functors give rise to $G$-Noetherian spaces $V$ where $G_i=\GL_i$, the map $G_i\to G_{i+1}$ is given by
$$
A\mapsto\begin{pmatrix}A\\&1\end{pmatrix}
$$
and where $V_i$ is a polynomial representation of $\GL_i$. This was then generalised \cite{eggermont-snowden} to algebraic polynomial functors of finite degree. Such functors give sequences $(G_i)_{i\geq1}$ of classical algebraic groups together with algebraic representations $(V_i)_{i\geq1}$.\bigskip

In this paper, we consider sequence of classical algebraic groups that do not arise this way, such as the sequence
$$\begin{tikzcd}
\SL_1 \arrow[hook]{r} & \SL_2 \arrow[hook]{r} & \SL_4 \arrow[hook]{r} & \dots\arrow[hook]{r}&\SL_{2^i}\arrow[hook]{r}&\dots
\end{tikzcd}$$
with maps given by
\begin{eqnarray*}
\SL_{2^i}&\hookrightarrow&\SL_{2^{i+1}}\\
A&\mapsto&\begin{pmatrix}A\\&A\end{pmatrix},
\end{eqnarray*}
where the image of an element $A\in G_i$ in $G_{i+1}$ can contain multiple copies of $A$. To such a sequence of groups, there is a corresponding sequence of Lie algebras, which we then dualize to get a sequence going in the opposite direction. We prove that the inverse limit of this sequence is Noetherian up to the action of the direct limit of the sequence of groups.

\subsection*{Notation and conventions}
Let $\NN$ be the set of positive integers. Denote the dual of a vector space $V$ by $V^*$. Let $i,j,k,\ell,m,n\in\NN$ be integers. Define $\delta_{ij}$ to be $1$ if $i=j$ and $0$ if $i\neq j$. Denote the set of $n\times n$ matrices by $\gl_n$. When $m\leq n$, we write $\pr_m$ for the projection map $\gl_n\twoheadrightarrow\gl_m$ of $n\times n$ matrices onto their topleft $m\times m$ submatrix. Denote the inverse limit of the sequence
$$\begin{tikzcd}
\gl_{1} & \arrow[two heads]{l} \gl_{2} & \arrow[two heads]{l} \gl_{3} & \arrow[two heads]{l} \dots
\end{tikzcd}$$
by $\gl_\infty$, let $I_\infty\in\gl_\infty$ be the infinite identity matrix and write $\pr_n$ for the projection map $\gl_\infty\twoheadrightarrow\gl_n$. Denote the set $\{1,\dots,n\}$ by $[n]$. Let $P,Q\in\gl_n$ be matrices. For subsets $\mathscr{K},\mathscr{L}\subseteq[n]$, we write $P_{\mathscr{K},\mathscr{L}}$ for the submatrix of $P$ with rows $\mathscr{K}$ and columns $\mathscr{L}$. We say that $P$ and $Q$ are similar (and write $P\sim Q$) if there is a matrix $A\in\GL_n$ such that $P=AQA^{-1}$. We say that $P$ and $Q$ are congruent if there is a matrix $B\in\GL_n$ such that $P=BQB^T$. For matrices $P_1,\dots,P_k$ not necessarily of the same size, denote the block-diagonal matrix with blocks $P_1,\dots,P_k$ by $\Diag(P_1,\dots,P_k)$. 

\subsection*{Acknowledgements}
I thank Jan Draisma and Micha\l{} Laso\'{n} for the helpful discussions I had with them. I also thank Jan Draisma for finding and proving Proposition \ref{prop_tuplerank} and for proofreading this paper. Finally, I thank the anonymous referee  for carefully reading this paper and for their useful comments.

\section{The main results}

We consider sequences of embeddings
$$\begin{tikzcd}
G_1 \arrow[hook,"\iota_1"]{r} & G_2 \arrow[hook,"\iota_2"]{r} & G_3 \arrow[hook,"\iota_3"]{r} & \dots
\end{tikzcd}$$
built up out of homomorphisms between the following classical algebraic groups
\begin{longtable}{CLCLCL}
\A_{n-1}:&\SL_n&=&\{A\in\GL_n\mid\det(A)=1\}&\mbox{for }n\in\NN\\~\\
\B_n:&\O_{2n+1}&=&\left\{A\in\GL_{2n+1}~\middle|~A\begin{pmatrix}&&I_n\\&1\\I_n\end{pmatrix}A^T=\begin{pmatrix}&&I_n\\&1\\I_n\end{pmatrix}\right\}&\mbox{for }n\in\NN\\~\\
\C_n:&\Sp_{2n}&=&\left\{A\in\GL_{2n}~\middle|~A\begin{pmatrix}&I_n\\-I_n\end{pmatrix}A^T=\begin{pmatrix}&I_n\\-I_n\end{pmatrix}\right\}&\mbox{for }n\in\NN\\~\\
\D_n:&\O_{2n}&=&\left\{A\in\GL_{2n}~\middle|~A\begin{pmatrix}&I_n\\I_n\end{pmatrix}A^T=\begin{pmatrix}&I_n\\I_n\end{pmatrix}\right\}&\mbox{for }n\in\NN
\end{longtable} 
\noindent which we view as embedded subgroups of $\GL_n$, for appropriate $n\in\NN$. Let $G,H$ be such groups, let $V,W$ be their standard representations and consider $K$ as the trivial representation of $G$. In \cite{baranov-zhilinskii}, an embedding $G\hookrightarrow H$ is called diagonal if
$$
W\cong V^{\oplus l}\oplus (V^*)^{\oplus r}\oplus K^{\oplus z}
$$
as representations of $G$ for some $l,r,z\in\ZZ_{\geq0}$ with $l+r\geq1$. The triple $(l,r,z)$ is called the signature of the embedding. If $G$ is of type $\A$, then the signature of a diagonal embedding $G\hookrightarrow H$ is unique. However, if $G$ is of type $\B$, $\C$ or $\D$, then the representation $V$ is isomorphic to $V^*$. In this case, we will always assume that $r=0$, which makes the pair $(l,z)$ unique, and we also denote the signature by $(l,z)$.

\begin{exs}
Let $G\subseteq\GL_n,H,L$ be classical groups of type $\A$, $\B$, $\C$ or $\D$.
\begin{itemize}
\item[(a)] For each $B\in\GL_n$ with $BG=GB$, the automorphism 
\begin{eqnarray*}
G&\rightarrow&G\\
A&\mapsto& BAB^{-1}
\end{eqnarray*}
is diagonal with signature $(1,0,0)$.
\item[(b)] For all matrices $A\in G$, we have $A^{-T}\in G$. The automorphism
\begin{eqnarray*}
G&\rightarrow&G\\
A&\mapsto& A^{-T}
\end{eqnarray*}
is diagonal with signature $(0,1,0)$.
\item[(c)] The composition of any two diagional embeddings $G\hookrightarrow H$ and $H\hookrightarrow L$ is a diagonal embedding $G\hookrightarrow L$. 
\end{itemize}
\end{exs}

We will assume the sequence
$$\begin{tikzcd}
G_1 \arrow[hook,"\iota_1"]{r} & G_2 \arrow[hook,"\iota_2"]{r} & G_3 \arrow[hook,"\iota_3"]{r} & \dots
\end{tikzcd}$$
consists of diagonal embeddings. Let $G$ be its direct limit and consider the associated sequence
$$\begin{tikzcd}
\g_1 \arrow[hook]{r} & \g_2 \arrow[hook]{r} & \g_3 \arrow[hook]{r} & \dots
\end{tikzcd}$$
where $\g_i$ is the Lie algebra of $G_i$. Now, we let $V$ be the inverse limit of the sequence
$$\begin{tikzcd}
\g_1^* & \arrow[two heads]{l} \g_2^* & \arrow[two heads]{l} \g_3^* & \arrow[two heads]{l} \dots
\end{tikzcd}$$
obtained by dualizing the previous sequence. Then $V$ has a natural action of $G$. If we modify our sequence by replacing  
$$\begin{tikzcd}
G_i \arrow[hook,"\iota_i"]{r} & G_{i+1} \arrow[hook,"\iota_{i+1}"]{r} & G_{i+2} 
\end{tikzcd}$$
by
$$\begin{tikzcd}
G_i \arrow[hook,"\iota_{i+1}\circ\iota_i"]{rr} &\quad\quad& G_{i+2} 
\end{tikzcd}$$
then both the direct limit $G$ and the inverse limit $V$ do not change. So we may replace our sequence of groups by any of its infinite subsequences. Conversely, we can also replace our sequence by any supersequence. Note that there always exists an infinite subsquence such that every group in the subsequence is of the same type.

\begin{main}\label{main}
Assume that one of the following conditions hold:
\begin{itemize}
\item[(a)] The group $G_i$ has type $\A$ for infinitely many $i\in\NN$.
\item[(b)] The characteristic of $K$ does not equal $2$.
\end{itemize}
Then the space $V$ is $G$-Noetherian, i.e. for every descending sequence
$$
V\supseteq X_1\supseteq X_2\supseteq X_3\supseteq\dots
$$
of $G$-stable closed subsets of $V$ there is an $i\in\NN$ such that $X_i=X_j$ for all $j\geq i$.
\end{main}

\begin{re}
When we prove the Main Theorem, we may assume that all $G_i$ have the same type. When this type is $\B$, $\C$ or $\D$, we assume that $\cha(K)\neq2$. This way we know that the set of (skew-)symmetric $n\times n$ matrices congruent to some given (skew-)symmetric matrix $A$ equals the set of all (skew-)symmetric matrices whose rank is equal to the rank of $A$. See the proofs of Lemmas \ref{lm_boundedranksp} and \ref{lm_boundedrankod} and Proposition~\ref{prop_boundedrankd}.
\end{re}

When all $G_i$ are of type $\A$ and $(l,r)=(1,0)$ for all but finitely many embeddings, the group $G$ equals $\SL_\infty$ and the space $V$ can be identified with a quotient of the set $\gl_\infty$ of $\NN\times\NN$ matrices. We prove this case of the Main Theorem by classifying all $\SL_\infty$-stable closed subsets of $\gl_\infty$.

\begin{de}
Define the rank of a matrix $P\in\gl_\infty$ as 
$$
\rk(P)=\sup\{\rk(\pr_n(P))\mid n\in\NN\}\in\ZZ_{\geq0}\cup\{\infty\}.
$$
\end{de}

We use the following definition from \cite{draisma-eggermont}.

\begin{de}
Let $n\in\NN\cup\{\infty\}$ and let $Q_1,\dots,Q_k$ be elements of $\gl_n$. Define 
$$
\rk(Q_1,\dots,Q_k)=\inf\left\{\rk(\mu_1Q_1+\dots+\mu_kQ_k)~\middle|~(\mu_1:\dots:\mu_k)\in\PP^{k-1}\right\}\in\ZZ_{\geq0}\cup\{\infty\}
$$
to be the rank of the tuple $(Q_1,\dots,Q_k)$.
\end{de}

\begin{thm}\label{thm_maingl}
The space $\gl_{\infty}$ is $\SL_\infty$-Noetherian. Any $\SL_\infty$-stable closed subset of $\gl_{\infty}$ is a finite union of irreducible $\SL_\infty$-stable closed subsets and the irreducible $\SL_\infty$-stable closed subsets of $\gl_\infty$ are $\gl_\infty$ itself together with the subsets
$$
\left\{P\in\gl_\infty~\middle|~\rk(P,I_\infty)\leq k\right\}, \left\{P\in\gl_\infty~\middle|~\rk(P-\lambda I_\infty)\leq k\right\}
$$ 
for $\lambda\in K$ and $k\in\ZZ_{\geq0}$.
\end{thm}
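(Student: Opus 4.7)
The plan is to classify the $\SL_\infty$-orbit closures in $\gl_\infty$, identify them with the listed irreducible sets, and deduce both the finite-decomposition statement and Noetherianity. To each $P \in \gl_\infty$ I would associate $k(P) := \rk(P, I_\infty) \in \ZZ_{\geq 0} \cup \{\infty\}$ and, when $k(P)$ is finite, the $\lambda(P) \in \bar K$ with $\rk(P - \lambda(P) I_\infty) = k(P)$. Uniqueness of $\lambda(P)$ follows from subadditivity of rank: if $\rk(P - \lambda_1 I_\infty)$ and $\rk(P - \lambda_2 I_\infty)$ were both finite for distinct $\lambda_1, \lambda_2$, then $\rk((\lambda_2 - \lambda_1) I_\infty)$ would be finite, contradicting $\rk(I_\infty) = \infty$. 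Both $k$ and $\lambda$ are $\SL_\infty$-orbit invariants.

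I would first verify the listed sets are $\SL_\infty$-stable (immediate from conjugation-invariance of rank), closed, and irreducible. Each $\{P : \rk(P - \lambda I_\infty) \leq k\}$ is the inverse limit of the closed irreducible determinantal varieties $\{P \in \gl_n : \rk(P - \lambda I_n) \leq k\}$ under dominant transition maps. The finite-level analogue $\{P \in \gl_n : \rk(P, I_n) \leq k\}$ is the image under the proper projection $\gl_n \times \PP^1 \to \gl_n$ of the closed determinantal locus $\{(P, (\mu_1:\mu_2)) : \rk(\mu_1 P + \mu_2 I_n) \leq k\}$; irreducibility is manifest from the surjective parameterization $(A, N, \mu) \mapsto A \Diag(N, \mu I_{n-k}) A^{-1}$ from $\GL_n \times \gl_k \times K$, and taking inverse limits preserves these properties.

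The hardest step is computing orbit closures, especially the case $k(P) = \infty$ where I want $\overline{\SL_\infty \cdot P} = \gl_\infty$, equivalently $\overline{\pr_n(\SL_\infty \cdot P)} = \gl_n$ for every $n$. Given a target $Q \in \gl_n$, the strategy is to find $N \gg n$ and an $A \in \SL_N$ with $\pr_n(A \pr_N(P) A^{-1})$ approximating $Q$. Writing $(A \pr_N(P) A^{-1})_{ij} = w_i^T \pr_N(P) v_j$ with $v_j = A^{-1} e_j$, $w_i = A^T e_i$, and $w_i^T v_j = \delta_{ij}$, the key observation is that if $V = \span(v_1,\dots,v_n)$ satisfies $V \cap \ker \pr_N(P) = 0$ and $V \cap \pr_N(P) V = 0$, then the residues $r_j := \pr_N(P) v_j \bmod V$ are linearly independent in a complement of $V$, and the $w_i$'s on that complement can be freely chosen to realise any prescribed top-left $n \times n$ block. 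The condition $k(P) = \infty$ supplies the necessary genericity: using monotonicity of $\rk(\pr_N(P) - \mu I_N)$ in $N$ and that the $\mu$'s violating $\rk(\pr_N(P) - \mu I_N) \geq n$ belong to the finite spectrum of $\pr_N(P)$, one shows that for $N$ large every eigenvalue of $\pr_N(P)$ has geometric multiplicity at most $N-n$, so generic $V$ of dimension $n$ works. Analogous but easier arguments using the $\lambda$-eigenspace of $\pr_N(P)$ handle $k(P) < \infty$: if $\lambda(P) \in K$ then $\overline{\SL_\infty \cdot P} = \{Q : \rk(Q - \lambda(P) I_\infty) \leq k(P)\}$; if $\lambda(P) \notin K$ then $\overline{\SL_\infty \cdot P} = \{Q : \rk(Q, I_\infty) \leq k(P)\}$ via a Galois-descent refinement.

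For an irreducible $\SL_\infty$-stable closed $X$, upper semi-continuity of the invariants yields a generic value $k(X)$ and (when finite) a generic $\lambda(X)$ that is either constant in $K$ or non-$K$-rational; taking the orbit closure of a generic $P \in X$ forces $X$ to equal one of the listed sets. For arbitrary $\SL_\infty$-stable closed $X$, decomposing each projection $\pr_n(X) \subseteq \gl_n$ into its finitely many irreducible components (by Noetherianity of $\gl_n$) and matching these across the inverse system yields a finite decomposition of $X$ into listed sets. Noetherianity follows since any descending chain of $\SL_\infty$-stable closed subsets induces a descending chain of finite collections of pairs $(\lambda, k)$ in a well-founded poset, which must stabilize.
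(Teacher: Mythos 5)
Your overall strategy — classify orbit closures via the invariants $k(P) = \rk(P,I_\infty)$ and $\lambda(P)$, identify the listed irreducible sets, decompose an arbitrary $X$ into finitely many of them, and deduce Noetherianity from a well-founded poset — is the same as the paper's. Your direct proof that a $P$ with $k(P)=\infty$ has dense orbit (via the incidence variety $\{(W,[v]) : v, Pv \in W\}$, showing that for generic $V$ the constraints $w_i^T v_j = \delta_{ij}$, $w_i^T Pv_j = Q_{ij}$ are compatible) is a legitimate alternative to the paper's indirect route through Lemmas \ref{lm_lowranklowtuplerank} and \ref{lm_boundedtrk}; in fact the same dimension-count on that incidence variety is the content of Proposition \ref{prop_tuplerank} in the paper, so the two arguments are closer than they look. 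Your Galois-descent caution about $\lambda(P) \in \overline{K}\setminus K$ is unnecessary: if $\rk(P-\lambda I_\infty)<\infty$ for $\lambda\in\overline{K}$ then $\rk(P-\sigma(\lambda)I_\infty)$ is also finite for any $K$-automorphism $\sigma$, and the subadditivity argument forces $\sigma(\lambda)=\lambda$ for all $\sigma$, so $\lambda\in K$. That case simply never arises, and the same observation cleans up the genericity step.

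The genuine gap is the finiteness of the decomposition of an arbitrary $\SL_\infty$-stable closed $X$. You write that one decomposes each $X_n := \overline{\pr_n(X)}$ into its finitely many irreducible components and ``matches these across the inverse system.'' But the number $c_n$ of components of $X_n$ is a priori non-decreasing in $n$ (each component of $X_n$ is dominated by at least one component of $X_{n+1}$), and nothing in your argument bounds it. If $c_n\to\infty$, matching would yield an infinite union of orbit closures, and your Noetherianity step (a descending chain of finite collections of pairs $(\lambda,k)$) would have no finite collections to begin with. The paper supplies exactly this missing bound in Lemma \ref{lm_containedinunion} and Proposition \ref{prop_closedsubsetsnotcontaining}: pick $m$ with $X_m\subsetneq\gl_m$ and a nonzero $f\in I(X_m)$ whose restriction $g(t)=f(tI_m)$ is nonzero (possible once $\span(I_\infty)\not\subseteq X$); then any $\lambda$ occurring as $\lambda(P)$ for $P\in X$ must satisfy $g(\lambda)=0$, so only finitely many $\lambda$'s occur, and the remaining contribution is controlled by Proposition \ref{prop_closedsubsetcontaining}. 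Your proposal has no analogue of this step. Relatedly, your argument for the irreducible case is also thin: the sets $\{P : \rk(P,I_\infty)\leq k\}$ are irreducible but are \emph{not} orbit closures (they are unions $\bigcup_\lambda \{Q:\rk(Q-\lambda I_\infty)\leq k\}$), so ``take the orbit closure of a generic $P\in X$'' cannot by itself produce them; the gesture toward a ``non-$K$-rational generic $\lambda$'' needs to be turned into an actual argument, and again the polynomial-at-$\span(I_\infty)$ device is what makes this precise in the paper.
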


\begin{re}
We would like to point out that the $\SL_\infty$-Noetherianity of $\gl_\infty$ also follows from \cite[Theorem 1.2]{eggermont-snowden}.
\end{re}

\begin{re}\label{re_uniquehighgeomult}
Let $P\in\gl_{\infty}$ be an $\NN\times\NN$ matrix such that $\rk(P,I_\infty)<\infty$. Then we have $\rk(P-\lambda I_\infty)<\infty$ for some $\lambda\in K$. If this holds for distinct $\lambda,\lambda'\in K$, then
$$
\infty=\rk((\lambda-\lambda')I_\infty)=\rk\left((P-\lambda'I_\infty)-(P-\lambda I_\infty)\right)\leq \rk(P-\lambda'I_\infty)+\rk(P-\lambda I_\infty)<\infty
$$
and hence the $\lambda\in K$ such that $\rk(P-\lambda I_\infty)<\infty$ must be unique. This is the infinite analogue of the statement that an $n\times n$ matrix can have at most one eigenvalue with geometric multiplicity more than $n/2$.
\end{re}

\begin{re}
When we call each of the closed subsets $X\subseteq\gl_\infty$ listed in the theorem irreducible, we mean this in the following sense: if we have 
$$
X=Y\cup Z
$$
for (not necessarily $\SL_\infty$-stable) closed subsets $Y,Z\subseteq X$, then $X=Y$ or $X=Z$.
\end{re}

\section{Structure of the proof}

In this section, we reduce the Main Theorem to a number of cases and we outline the structure that the proofs of each of those cases share.

\subsection{Reduction to standard diagonal embeddings}
When the vector space $V$ is finite-dimesional over $K$, the Main Theorem becomes trivial. So we will only consider the cases where $V$ is infinite-dimensional. For all $i\in\NN$, let $(l_i,r_i,z_i)$ be the signature of the embedding $\iota_i\colon G_i\hookrightarrow G_{i+1}$. When $G_i$ is of type $\B$, $\C$ or $\D$, we will assume that $r_i=0$. The following lemma tells us that we can assume that $l_i\geq r_i$ for all $i\in\NN$.

\begin{lm}
For all $i\in\NN$, let $\sigma_i\colon G_i\rightarrow G_i$ be the automorphism sending $A\mapsto A^{-T}$ and take $k_i\in\ZZ/2\ZZ$. Then the bottom row of the commutative diagram
$$\begin{tikzcd}
G_1 \arrow[hook,"\iota_1"]{r}\arrow["\sigma_1^{k_1}"]{d} & G_2 \arrow[hook,"\iota_2"]{r}\arrow["\sigma^{k_2}_2"]{d} & G_3 \arrow[hook,"\iota_3"]{r}\arrow["\sigma^{k_3}_3"]{d} & \dots\\
G_1 \arrow[hook]{r} & G_2 \arrow[hook]{r} & G_3 \arrow[hook]{r} & \dots
\end{tikzcd}$$
is a sequence of diagonal embeddings with signatures $\sigma^{k_i+k_{i+1}}(l_i,r_i,z_i)$ where $\sigma$ acts by permuting the first two entries.
\end{lm}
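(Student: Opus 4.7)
The plan is to identify the bottom-row maps explicitly and then track how pre- and post-composition with the involution $\sigma$ affects the representation-theoretic decomposition that defines the signature.

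Since each $\sigma_i$ is an involutive automorphism, commutativity of the square forces the $i$-th map in the bottom row to be
$$
\iota_i' \;=\; \sigma_{i+1}^{k_{i+1}} \circ \iota_i \circ \sigma_i^{k_i}.
$$
As a composition of group embeddings this is itself a group embedding, so the bottom row is well-defined. The key representation-theoretic ingredient is the effect of a single $\sigma$-twist on the standard representation $W$ of a classical group $H$: the module obtained by letting $h \in H$ act via $\sigma_H(h) = h^{-T}$ is isomorphic to the dual $W^*$. For type $\A$ this is exactly the definition of the dual representation. In types $\B$, $\C$ and $\D$ the defining bilinear form $J$ satisfies $A^{-T} = JAJ^{-1}$ for every $A \in H$, so $\sigma_H$ is the inner automorphism by $J$ and moreover provides an isomorphism $W \cong W^*$ of $H$-modules directly. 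The trivial representation $K$ is fixed under every twist.

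To compute the signature of $\iota_i'$, start from the decomposition $V_{i+1} \cong V_i^{\oplus l_i} \oplus (V_i^*)^{\oplus r_i} \oplus K^{\oplus z_i}$ as a $G_i$-module via $\iota_i$. Pre-composing with $\sigma_i^{k_i}$ twists the $G_i$-action, and by the observation above this swaps the $V_i$- and $V_i^*$-summands exactly when $k_i$ is odd. Post-composing with $\sigma_{i+1}^{k_{i+1}}$ replaces the $G_{i+1}$-module $V_{i+1}$ by its dual, which dualises the whole decomposition and again swaps $V_i$ with $V_i^*$ exactly when $k_{i+1}$ is odd. The net effect on the signature is the transposition $(l_i,r_i) \mapsto (r_i,l_i)$ applied $k_i+k_{i+1}$ times mod $2$, giving signature $\sigma^{k_i+k_{i+1}}(l_i,r_i,z_i)$ as claimed. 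In types $\B$, $\C$, $\D$ we have $r_i = 0$ and $V_i \cong V_i^*$, so the swap has no effect and the formula is consistent with the convention chosen there.

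The only point requiring care is the identification of the $\sigma_H$-twisted standard module with $W^*$ for each type of classical group; once this is recorded, the rest is routine bookkeeping of summands and I do not anticipate any serious obstacle.
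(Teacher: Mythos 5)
Your proposal is correct and takes essentially the same approach as the paper: the paper's one-line proof simply invokes that $\sigma_i$ is a diagonal involution (signature $(0,1,0)$, Example (b)) and that compositions of diagonal embeddings are diagonal (Example (c)), leaving the signature bookkeeping implicit, while you spell out exactly that bookkeeping by writing $\iota_i' = \sigma_{i+1}^{k_{i+1}}\circ\iota_i\circ\sigma_i^{k_i}$ and tracking how pre- and post-twisting by $\sigma$ swaps $V_i$ with $V_i^*$ in the branching decomposition.
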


The lemma follows from the fact that the automorphism $G_i\rightarrow G_i, A\mapsto A^{-T}$ is diagonal and its own inverse. We can choose the $k_i$ recursively so that $l_i\geq r_i$ for all $i\in\NN$ in the bottom sequence. Since the vertical maps are isomorphisms and the diagram commutes, the bottom sequence gives rise to isomorphic $G$ and $V$. This allows us to indeed assume that $l_i\geq r_i$.\bigskip

Let $G$ be a classical group of type $\A$, $\B$, $\C$ or $\D$. Let $l,r,z\in\ZZ_{\geq0}$ be integers with $r=0$ if $G$ is not of type~$\A$. Let $\beta_1,\beta_2$ be non-degenerate $G$-invariant bilinear forms on $V^{\oplus l}\oplus (V^*)^{\oplus r}\oplus K^{\oplus z}$.

\begin{lm}
Assume that $K=\overline{K}$ and that one of the following conditions hold:
\begin{itemize}
\item[(a)] $\beta_1$ and $\beta_2$ are both skew-symmetric.
\item[(b)] $\beta_1$ and $\beta_2$ are both symmetric and $\cha(K)\neq2$.
\end{itemize}
Then there exists a $G$-equivariant automorphism $\varphi$ of $V^{\oplus l}\oplus (V^*)^{\oplus r}\oplus K^{\oplus z}$ such that 
$$
\beta_2(\varphi(v),\varphi(w))=\beta_1(v,w)
$$
for all $v,w\in V^{\oplus l}\oplus (V^*)^{\oplus r}\oplus K^{\oplus z}$.
\end{lm}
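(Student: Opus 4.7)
The plan is to parameterise the non-degenerate $G$-invariant bilinear forms on $W:=V^{\oplus l}\oplus (V^*)^{\oplus r}\oplus K^{\oplus z}$ via Schur's lemma, then show that the group $\operatorname{Aut}_G(W)$ of $G$-equivariant automorphisms acts transitively on the set of forms of a given symmetry type, reducing to the classical fact that over $\overline{K}$ any two non-degenerate (skew-)symmetric bilinear forms of the same shape are congruent.

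First I fix a canonical $G$-invariant form $\omega$ on $V$ (symmetric for types $\B$, $\D$, skew for type $\C$, with sign $\epsilon\in\{\pm1\}$), and in type $\A$ the evaluation pairing $V\otimes V^*\to K$. Since $V$ is irreducible for each classical $G$, and in type $\A$ moreover $V\not\cong V^*$, Schur's lemma identifies the space of $G$-invariant bilinear forms on $W$ with
$$
\Mat_{l,r}(K)\oplus\Mat_{r,l}(K)\oplus\Mat_{z,z}(K)
$$
in type $\A$, and with $\Mat_{l,l}(K)\oplus\Mat_{z,z}(K)$ in types $\B$, $\C$, $\D$ (where $r=0$). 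A direct computation using $\omega(y,x)=\epsilon\omega(x,y)$ shows that the symmetry (resp.\ skew-symmetry) of $\beta$ translates exactly into: in type $\A$, $M_2=\pm M_1^T$ and $N$ is (skew-)symmetric; in the other types, $M=\pm\epsilon M^T$ and $N$ is (skew-)symmetric. Non-degeneracy forces $M$ (or $M_1$) and $N$ to be invertible, and in type $\A$ it forces $l=r$.

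Next I identify $\operatorname{Aut}_G(W)$ via Schur as $\GL_l(K)\times\GL_r(K)\times\GL_z(K)$ in type $\A$ and $\GL_l(K)\times\GL_z(K)$ in the other types, acting on the matrix parameters by twisted congruence: in type $\A$, $(A_1,A_2,A_3)$ sends $M_1\mapsto A_1^T M_1 A_2$ and $N\mapsto A_3^T N A_3$; in the other types $(A_1,A_3)$ acts by $M\mapsto A_1^T M A_1$ and $N\mapsto A_3^T N A_3$. Both $\beta_1$ and $\beta_2$ give rise to matrix data of the same shape, and with the required invertibility and parity constraints satisfied. Over the algebraically closed field $K$ it is classical that (i) any two invertible $l\times l$ matrices lie in the same $\GL_l\times\GL_l$ orbit under $(A,B)\mapsto A^T M B$, (ii) any two invertible symmetric matrices of the same size are congruent (using $\cha K\neq 2$), and (iii) any two invertible skew-symmetric matrices of the same (necessarily even) size are congruent. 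Applying these to the pairs $(M,N)$ associated to $\beta_1$ and to $\beta_2$ produces the desired automorphism $\varphi$.

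The main obstacle is the case analysis and the bookkeeping of symmetries: one must carefully trace how the intrinsic sign $\epsilon$ of $\omega$ interacts with the prescribed symmetry of the $\beta_i$ and with the twisted-congruence action of $\operatorname{Aut}_G(W)$, and check in each case that the parity constraints (e.g.\ evenness of $l$ when a skew invertible $M$ is required) are automatic from the existence of the $\beta_i$. The hypothesis $\cha(K)\neq 2$ enters only to invoke step (ii); in the skew-symmetric case (a) the classification (iii) is available in all characteristics, which is why (a) needs no restriction on $\cha(K)$.
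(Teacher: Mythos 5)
Your approach is essentially the paper's: use Schur's lemma to express the $G$-invariant form via matrix data, identify $\operatorname{Aut}_G(W)$ as a product of general linear groups acting by (twisted) congruence, and reduce to the classical congruence classification of non-degenerate (skew-)symmetric matrices over $\overline{K}$. The paper's one-line proof is exactly this reduction, so your more detailed account is a faithful expansion. The sign bookkeeping $M=\pm\epsilon M^T$ is the right thing to track.

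There is, however, an error in your concluding paragraph. You assert that case (a) (both $\beta_i$ skew) only ever invokes classification (iii) and therefore needs no restriction on $\cha K$. But your own relation $M=\pm\epsilon M^T$ contradicts this: when $G$ is of type $\C$, the form $\omega$ on $V$ is skew (so $\epsilon=-1$), and then $\beta$ skew forces $M$ to be \emph{symmetric}, so one must invoke (ii), which requires $\cha K\neq2$. Concretely, with $G=\Sp_2$ over $\overline{\mathbb{F}_2}$ and $l=2$, $r=z=0$, both $I_2$ and the antidiagonal matrix with $1$'s give invertible symmetric $M$'s, hence non-degenerate alternating $G$-invariant forms on $V^{\oplus 2}$, but these two matrices are not congruent (one is alternating, the other is not), so no $G$-equivariant automorphism intertwines them. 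This reveals a latent imprecision in the lemma's own statement — case (a) for $G$ of type $\C$ does in fact require $\cha K\neq2$ — which the paper never runs into because the proof of the Main Theorem assumes $\cha K\neq2$ whenever it works with types $\B$, $\C$, $\D$. Your writeup should therefore either restrict case (a) to $G$ not of type $\C$ when $\cha K=2$, or at least note that the reduction flips the symmetry type exactly when $\epsilon=-1$, so case (a) may still land on (ii).
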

\begin{proof}
First suppose that $l=r=0$. In this case, the lemma reduces to the well-known statement that the matrices corresponding to $\beta_1$ and $\beta_2$ are congruent. In genenal, Schur's Lemma splits the lemma into the cases $r=z=0$, $l=z=0$ and $l=r=0$ and reduces the first two cases to the third.
\end{proof}

Let $f,g\colon G\rightarrow H\subseteq\GL_n$ be two diagonal embeddings with signature $(l,r,z)$.

\begin{lm}\label{lm_equiv_diag}
If the type of $H$ is $\B$, $\C$ or $\D$, assume that $K=\overline{K}$. If the type of $H$ is $\B$ or $\D$, assume in addition that $\cha(K)\neq2$. Then there is a $P\in H$ such that the isomorphism $\pi\colon H\rightarrow H,A\mapsto PAP^{-1}$ makes the diagram
$$\begin{tikzcd}
  G  \arrow[hook,"f"]{r} \arrow["\id"]{d} & H \arrow["\pi"]{d} \\
  G \arrow[hook,"g"]{r} & H
\end{tikzcd}$$
commute.
\end{lm}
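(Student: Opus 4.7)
The plan is to first produce an intertwiner in the ambient $\GL_n$ using only representation theory, then correct it by a $G$-equivariant automorphism (furnished by the previous lemma) so that it lands in $H$. Because $f$ and $g$ are diagonal embeddings of the same signature $(l,r,z)$, the standard representation $K^n$ of $H$ becomes, via $f$ and via $g$, two copies of the same $G$-representation $V^{\oplus l}\oplus (V^*)^{\oplus r}\oplus K^{\oplus z}$. Any isomorphism between these two representations is an element $P_0\in\GL_n$ with $g(A)P_0=P_0f(A)$ for all $A\in G$, so that $g=P_0f(\cdot)P_0^{-1}$. If $H$ is of type $\A$, this already gives an automorphism of $H=\SL_n$ by conjugation (and one can rescale $P_0$ by a $G$-equivariant automorphism, using Schur's lemma to see that $\End_G$ of the representation surjects onto $K^\times$ via the determinant on $K^n$, to land inside $\SL_n$); no hypothesis on $K$ is needed.

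For types $\B$, $\C$ and $\D$, the group $H$ preserves a non-degenerate bilinear form $\beta$ on $K^n$, symmetric in the first and third cases and skew-symmetric in the second. I would introduce the pulled-back form
$$
\beta'(v,w):=\beta(P_0v,P_0w)
$$
and check that it is $G$-invariant with respect to the $f$-action: for $A\in G$ we have $\beta'(f(A)v,f(A)w)=\beta(P_0f(A)v,P_0f(A)w)=\beta(g(A)P_0v,g(A)P_0w)=\beta(P_0v,P_0w)$, since $g(A)\in H$ preserves $\beta$. The original form $\beta$ is also $G$-invariant (via $f$) because $f(A)\in H$. Both forms share the symmetry of $\beta$ and are non-degenerate, so under the standing hypotheses on $K$ they fall into the scope of the preceding lemma, which provides a $G$-equivariant automorphism $\varphi$ of $V^{\oplus l}\oplus(V^*)^{\oplus r}\oplus K^{\oplus z}$ with $\beta'(\varphi v,\varphi w)=\beta(v,w)$, i.e.\ $\beta(P_0\varphi v,P_0\varphi w)=\beta(v,w)$.

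Set $P:=P_0\varphi$. The last identity says exactly that $P$ preserves $\beta$, hence $P\in H$. Since $\varphi$ is $G$-equivariant with respect to the $f$-action, it commutes with each $f(A)$, so
$$
Pf(A)P^{-1}=P_0\varphi f(A)\varphi^{-1}P_0^{-1}=P_0f(A)P_0^{-1}=g(A),
$$
which is precisely the commutativity of the diagram.

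The main obstacle, and the reason the preceding lemma was stated, is step two: once $P_0$ exists the form $\beta'$ is automatic, but to correct $P_0$ inside $H$ one needs the two non-degenerate invariant forms $\beta$ and $\beta'$ to be $G$-equivalent. This equivalence is what fails in general over non-closed fields, and is the reason behind the assumptions $K=\overline{K}$ (to kill the discrete invariants of symmetric/skew-symmetric forms on the trivial isotypic component) and $\cha(K)\neq2$ in the symmetric case. The type $\A$ case is comparatively easy because the standard representation carries no invariant bilinear form to match and determinants are the only obstruction.
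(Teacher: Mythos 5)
Your proof follows essentially the same route as the paper's: you produce an intertwiner $P_0$ (the paper's $R^{-1}Q$) from the $G$-equivariant identification $K^n\cong V^{\oplus l}\oplus(V^*)^{\oplus r}\oplus K^{\oplus z}$, and in the $\B$, $\C$, $\D$ cases correct it by the $G$-equivariant automorphism $\varphi$ supplied by the preceding lemma so that $P=P_0\varphi$ preserves the invariant form and hence lies in $H$. The only cosmetic difference is that you compare $\beta$ with the pulled-back form $\beta'$ directly on $K^n$ carrying the $f$-action, whereas the paper first transports both forms to the block-diagonal model via $Q$ and $R$ before invoking the preceding lemma; the type-$\A$ normalisation into $\SL_n$ is handled at the same level of detail as the paper's choice of $\lambda R^{-1}Q$.
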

\begin{proof}
The maps $f$ and $g$ both induce an isomorphism 
$$
K^n\cong V^{\oplus l}\oplus (V^*)^{\oplus r}\oplus K^{\oplus z}
$$
of representations of $G$. This means that there are matrices $Q,R$ such that
$$
Qf(A)Q^{-1}=Rg(A)R^{-1}=\Diag(A,\dots,A,A^{-T},\dots,A^{-T},I_z)
$$
for all $A\in G$ where the block-diagonal matrix has $l$ blocks $A$ and $r$ blocks $A^{-T}$. If $H$ is of type $\A$, then we take $P=\lambda R^{-1}Q$ for some $\lambda\in K$ such that $P\in\SL_n$ and see that the isomorphism $\pi\colon H\rightarrow H,A\mapsto PAP^{-1}$ makes the diagram commute.\bigskip

Assume that $H$ is not of type $\A$. Then $H=\{g\in\GL_n\mid g^TBg=B\}$ for some matrix $B\in\GL_n$. Let $\beta_1$ and $\beta_2$ be the $G$-invariant bilinear forms on $K^n$ defined by $Q^{-T}BQ^{-1}$ and $R^{-T}BR^{-1}$. By the previous lemma, there exists a $G$-equivariant automorphism $\varphi$ of $K^n$ such that 
$$
\beta_2(\varphi(v),\varphi(w))=\beta_1(v,w)
$$
for all $v,w\in K^n$. Let $S$ be the matrix corresponding to $\varphi$. Then 
$$
S^TQ^{-T}BQ^{-1}S=R^{-T}BR^{-1}
$$
and
$$
S\Diag(A,\dots,A,A^{-T},\dots,A^{-T},I_z)=\Diag(A,\dots,A,A^{-T},\dots,A^{-T},I_z)S
$$
for all $A\in G$. Take $P=R^{-1}S^{-1}Q$. Then $P^{-1}\in H$ and therefore $P\in H$. The isomorphism $\pi\colon H\rightarrow H,A\mapsto PAP^{-1}$ makes the diagram commute.
\end{proof}

\begin{prop}
For every $i\in\NN$, let $\iota_i'\colon G_i\hookrightarrow G_{i+1}$ be a diagonal embedding with the same signature $(l_i,r_i,z_i)$ as $\iota_i$. If the type of $G_i$ is $\B$, $\C$ or $\D$ for any $i\in\NN$, assume that $K=\overline{K}$. If the type of $G_i$ is $\B$ or $\D$ for any $i\in\NN$, assume in addition that $\cha(K)\neq2$. Then there exist isomorphisms $\varphi_i\colon G_i\rightarrow G_i$ making the diagram
$$\begin{tikzcd}
G_1 \arrow[hook,"\iota_1"]{r}\arrow["\id"]{d} & G_2 \arrow[hook,"\iota_2"]{r}\arrow["\varphi_2"]{d} & G_3 \arrow[hook,"\iota_3"]{r}\arrow["\varphi_3"]{d} & \dots\\
G_1 \arrow[hook,"\iota_1'"]{r} & G_2 \arrow[hook,"\iota_2'"]{r} & G_3 \arrow[hook,"\iota_3'"]{r} & \dots
\end{tikzcd}$$
commute. 
\end{prop}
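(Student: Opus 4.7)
The plan is a straightforward induction whose only real ingredient is Lemma \ref{lm_equiv_diag}. I would set $\varphi_1 := \id_{G_1}$ and then, for each $i$, construct $\varphi_{i+1}$ from $\varphi_i$. Crucially, I would strengthen the inductive hypothesis by requiring each $\varphi_j$ to be an \emph{inner} automorphism of $G_j$, i.e.\ of the form $A \mapsto P_j A P_j^{-1}$ with $P_j \in G_j$. This is natural because Lemma \ref{lm_equiv_diag} produces automorphisms of exactly this shape, so the inductive invariant is preserved for free.

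Given $\varphi_i$, I want to apply Lemma \ref{lm_equiv_diag} to the two embeddings $f := \iota_i$ and $g := \iota_i' \circ \varphi_i$ of $G_i$ into $G_{i+1}$, which will yield the desired $\varphi_{i+1}$ with $\varphi_{i+1} \circ \iota_i = \iota_i' \circ \varphi_i$. For the lemma to apply, both $f$ and $g$ must be diagonal embeddings of the same signature. By hypothesis $f$ has signature $(l_i, r_i, z_i)$. For $g$, since $\varphi_i$ is conjugation by $P_i \in G_i$, we have $g(A) = \iota_i'(P_i)\, \iota_i'(A)\, \iota_i'(P_i)^{-1}$, so the $G_i$-module structure on $V_{i+1}$ pulled back along $g$ is conjugate, via the single element $\iota_i'(P_i) \in \GL(V_{i+1})$, to the one pulled back along $\iota_i'$. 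These representations are therefore isomorphic, so $g$ also has signature $(l_i, r_i, z_i)$. The hypotheses on $K$ and the type of $G_{i+1}$ required by Lemma \ref{lm_equiv_diag} are exactly those assumed in the proposition, so the lemma produces an inner automorphism $\varphi_{i+1}$ of $G_{i+1}$ making the $(i+1)$-st square of the diagram commute and satisfying the inductive invariant.

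The only step requiring thought is verifying that $\iota_i' \circ \varphi_i$ remains a diagonal embedding of signature $(l_i, r_i, z_i)$, since a priori precomposing with an automorphism of the source could alter the decomposition type. Maintaining the inductive invariant that each $\varphi_j$ is inner is precisely what rules this out: inner twists preserve isomorphism classes of representations, whereas an outer automorphism such as $A \mapsto A^{-T}$ would swap $l$ and $r$. Once this invariant is set up, the argument is a direct, line-by-line iteration of Lemma \ref{lm_equiv_diag}, and I do not anticipate any further obstacles.
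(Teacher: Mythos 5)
Your proposal is correct and follows essentially the same inductive scheme as the paper: set $\varphi_1=\id$ and repeatedly invoke Lemma \ref{lm_equiv_diag} with $f=\iota_i$ and $g=\iota_i'\circ\varphi_i$, the crux being an inductive invariant guaranteeing that $\iota_i'\circ\varphi_i$ has the same signature as $\iota_i$. The only cosmetic difference is that you maintain ``$\varphi_i$ is inner,'' while the paper maintains ``$\varphi_i$ is a diagonal embedding with signature $(1,0,0)$''; since Lemma \ref{lm_equiv_diag} produces conjugation by an element $P\in H$, both invariants are preserved and both yield the required signature computation.
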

\begin{proof}
We construct the isomorphisms $\varphi_i$ recursively in such a way that the $\varphi_i$ are also diagonal embeddings with signature $(1,0,0)$. Write $\varphi_1=\id$, let $i\geq 2$ and assume that $\varphi_{i-1}$ has has already been constructed. Then $\iota'_{i-1}\circ \varphi_{i-1}$ has the same signature as $\iota_{i-1}$. So by the previous lemma, there exists an isomorphism $\varphi_i$ making the diagram
$$\begin{tikzcd}
  G_{i-1}  \arrow[hook,"\iota_{i-1}"]{rr} \arrow["\id"]{d} && G_i \arrow["\varphi_i"]{d} \\
  G_{i-1} \arrow[hook,"\iota'_{i-1}\circ \varphi_{i-1}"]{rr} && G_i
\end{tikzcd}$$
commute that also has signature $(1,0,0)$ as a diagonal embedding.
\end{proof}

Recall that, when we replace
$$\begin{tikzcd}
G_1 \arrow[hook,"\iota_1"]{r} & G_2 \arrow[hook,"\iota_2"]{r} & G_3 \arrow[hook,"\iota_3"]{r} & \dots
\end{tikzcd}$$
by supersequences or infinite subsequences, we do not change $G$ or $V$. Therefore we may assume that each group $G_i$ has the same type and we will prove the Main Theorem for sequences of groups of type $\A$, $\B$, $\C$ and $\D$ separately. The proposition tells us that, if we replace $K$ by its algebraic closure, the limits $G$ and $V$ only depend on the signatures of the diagonal embeddings. Since $G$-Noetherianity of $V$ over $\overline{K}$ implies $G$-Noetherianity of $V$ over the original field $K$, we only have to consider one diagonal embedding per possible signature.

\subsection{Identifying $V$ with the inverse limit of a sequence of quotients/subspaces of matrix spaces}

We encounter the following Lie algebras:
$$\begin{array}{clclc}
\A_{n-1}:&\sl_n&=&\{P\in\gl_n\mid\tr(P)=0\}&\mbox{for }n\in\NN\\~\\
\B_n:&\o_{2n+1}&=&\left\{P\in\gl_{2n+1}~\middle|~P\begin{pmatrix}&&I_n\\&1\\I_n\end{pmatrix}+\begin{pmatrix}&&I_n\\&1\\I_n\end{pmatrix}P^T=0\right\}&\mbox{for }n\in\NN\\~\\
\C_n:&\sp_{2n}&=&\left\{P\in\gl_{2n}~\middle|~P\begin{pmatrix}&I_n\\-I_n\end{pmatrix}+\begin{pmatrix}&I_n\\-I_n\end{pmatrix}P^T=0\right\}&\mbox{for }n\in\NN\\~\\
\D_n:&\o_{2n}&=&\left\{P\in\gl_{2n}~\middle|~P\begin{pmatrix}&I_n\\I_n\end{pmatrix}+\begin{pmatrix}&I_n\\I_n\end{pmatrix}P^T=0\right\}&\mbox{for }n\in\NN
\end{array}$$
These are all subspaces of $\gl_m$ for some $m\in\NN$. Consider the symmetric bilinear form $\gl_m\times\gl_m\rightarrow K,(P,Q)\mapsto\tr(PQ)$. This map is non-degenerate and therefore the map $\gl_m\rightarrow\gl_m^*,P\mapsto(Q\mapsto\tr(PQ))$ is an isomorphism. By composing this map with the restriction map $\gl_m^*\rightarrow\sl_m^*$ and factoring out the kernel, we find that
\begin{eqnarray*}
\gl_m/\span(I_m)&\rightarrow&\sl_m^*\\
P\mod I_m&\mapsto&(Q\mapsto\tr(PQ))
\end{eqnarray*}
is an isomorphism. When $\cha(K)\neq2$ and $\g\subseteq\gl_m$ is a Lie algebra of type $\B$, $\C$ or $\D$, the restriction of the bilinear map to $\g\times\g$ is non-degenerate. So the map
\begin{eqnarray*}
\g&\rightarrow&\g^*\\
P&\mapsto&(Q\mapsto\tr(PQ))
\end{eqnarray*}
is an isomorphism. Since the map $\gl_n\rightarrow\gl_n^*$ is in fact $\GL_n$-equivariant, the maps $\gl_m/\span(I_m)\rightarrow\sl_m^*$ and $\g\rightarrow\g^*$ are all isomorphisms of representations of the groups acting on them. Using these isomorphisms, we identify the duals $\g_i^*$ of the Lie algebras of the groups $G_i$ with quotients/subspaces of spaces of matrices. This in particular allows us to define the coordinate rings of the $\g_i^*$ in terms of entries of matrices. For type $\A$, we get
$$
K[\gl_n/\span(I_n)]=\{f\in K[\gl_n]\mid \forall P\in\gl_n\forall\lambda\in K\colon f(P+\lambda I_n)=f(P)\}
$$
which is the graded subring 
$$
K[p_{k\ell}\mid k\neq\ell]\otimes_KK[p_{11}-p_{kk}\mid k\neq1]
$$
of $K[\gl_n]=K[p_{k\ell}\mid1\leq k,\ell\leq n]$. For type $\B$, assuming that $\cha(K)\neq 2$, we have
$$
\o_{2n+1}=\left\{\begin{pmatrix}P&v&Q\\-w^T&0&-v^T\\R&w&-P^T\end{pmatrix}\in\gl_{2n+1}~\middle|~\begin{array}{c}Q+Q^T=0\\R+R^T=0\end{array} \right\}
$$
and therefore we get
$$
K[\o_{2n+1}]=K[p_{k\ell},q_{k\ell},r_{k\ell},v_k,w_k\mid 1\leq k,\ell\leq n]/(q_{k\ell}+q_{\ell k},r_{k\ell}+r_{\ell k}).
$$
For type $\C$, we have 
$$
\sp_{2n}=\left\{\begin{pmatrix}P&Q\\R&-P^T\end{pmatrix}\in\gl_{2n}~\middle|~\begin{array}{c}Q=Q^T\\R=R^T\end{array} \right\}
$$
and we get
$$
K[\sp_{2n}]=K[p_{k\ell},q_{k\ell},r_{k\ell}\mid 1\leq k,\ell\leq n]/(q_{k\ell}-q_{\ell k},r_{k\ell}-r_{\ell k}).
$$
For type $\D$, assuming that $\cha(K)\neq 2$, we have
$$
\o_{2n}=\left\{\begin{pmatrix}P&Q\\R&-P^T\end{pmatrix}\in\gl_{2n}~\middle|~\begin{array}{c}Q+Q^T=0\\R+R^T=0\end{array} \right\}
$$
and get
$$
K[\o_{2n}]=K[p_{k\ell},q_{k\ell},r_{k\ell}\mid 1\leq k,\ell\leq n]/(q_{k\ell}+q_{\ell k},r_{k\ell}+r_{\ell k}).
$$
For Lie algebras $\g\subseteq\gl_m$ of type $\B$, $\C$ or $\D$, we will denote elements of $K[\g]$ by their representatives in $K[\gl_m]$. Define a grading on each of these coordinate rings by $\grad(r_{k\ell})=\grad(w_k)=0$, $\grad(p_{k\ell})=\grad(v_k)=1$ and $\grad(q_{k\ell})=2$ for all $k,\ell\in[n]$.

\subsection{Moving equations around}

Let $X\subsetneq V$ be a $G$-stable closed subset. For each $i\in\NN$, let $V_i$ be the vector space (we identified with) $\g_i^*$ which is acted on by $G_i$ by conjugation and let $X_i$ be the closure of the projection from $X$ to $V_i$. Then $X_i$ is a $G_i$-stable closed subset of $V_i$ for all $i\in\NN$ and there exists an $i\in\NN$ such that $X_i\neq V_i$. This means that the ideal $I(X_i)\subseteq K[V_i]$ is non-zero. Let $f$ be a non-zero element of $I(X_i)$ and let $d$ be its degree. The first step of the proof of the Main theorem is to use this polynomial $f$ to get elements $f_j$ of $I(X_j)$  such that $f_j\neq 0$, such that $\deg(f_j)\leq d$ and such that $f_j$ is ``off-diagonal" for all $j\gg i$. When the groups $G_i$ are of type $\B$, $\C$ or $\D$, this last condition means that $f_j$ is a polynomial in only the variables $r_{k\ell}$ and $w_k$. When the groups $G_i$ are of type $\A$, we similarly require that the $f_j$ are polynomials in the variables $p_{k\ell}$ with $k\in\mathscr{K}$ and $\ell\in\mathscr{L}$ for some disjoint sets $\mathscr{K},\mathscr{L}$. \bigskip

The projection maps $\pr_i\colon V_{i+1}\rightarrow V_i$ induce maps $\pr_i^*\colon K[V_i]\rightarrow K[V_{i+1}]$ which are injective and degree-preserving. We will see that, for many of the maps $\pr_i$ we will encounter, the map $\pr_i^*$ is also $\grad$-preserving. Since $X_{i+1}$ projects into $X_i$, we have $\pr_i^*(I(X_i))\subseteq I(X_{i+1})$. So $f$ induces non-zero elements $g_j\in I(X_j)$ of degree $d$ for all $j>i$.\bigskip

Let $A\colon K^k\rightarrow G_j$ be a polynomial map such that the map
\begin{eqnarray*}
K^k&\rightarrow&G_j\\
\Lambda&\mapsto&A(\Lambda)^{-1}
\end{eqnarray*}
is polynomial as well. Then $A(\Lambda)\cdot g_j\in I(X_j)$ for all $\Lambda\in K^k$ and therefore linear combinations of such elements also lie in $I(X_j)$. Note that we can view $A(\Lambda)\cdot g_j$ as a polynomial in the entries of $\Lambda$ whose coefficients are elements of $K[V_j]$. Let $R$ be a $K$-algebra and $h\in R[x]$ a polynomial. Then, since the field $K$ is infinite, one sees using a Vandermonde matrix that the coefficients of $h$ are contained in the $K$-span of $\{h(\lambda)\mid \lambda\in K\}$. Applying this fact $k$ times, we see that all the coeffiecients of $A(\Lambda)\cdot g_j$ lie in $\span(A(\Lambda)\cdot g_j\mid\Lambda\in K^k)\subseteq I(X_j)$.\bigskip

We will let $f_j$ be a certain one of these coefficients. We have $\deg(f_j)\leq d$ by construction and we will choose $A$ in such a way that $f_j$ is ``off-diagonal". We will see that $f_j$ is obtained from $g_j$ by substituting variables into the top-graded part of $g_j$ with respect to the right grading (in most cases $\deg$ or $\grad$). Since the polynomial $g_j$ is non-zero, so is its top-graded part with respect to any grading. So it then suffices to check that this top-graded part does not become zero after the substitution. In the cases where is this not obvious, it will follow from a lemma stating that a certain morphism is dominant.

\subsection{Using knowledge about stable closed subsets of the \texorpdfstring{``off-diagonal"}{"off-diagonal"} part}
The space $V_j$ consists of matrices. When we have an ``off-diagonal" polynomial which is contained in $I(X_j)$, we know that the projection $Y$ of $X_j$ onto some off-diagonal submatrix cannot form a dense subset of the projection $W$ of the whole of $V_j$. We then give $W$ the structure of a representation such that $Y$ is stable and use the fact the we know that the ideal of $Y$ contains a non-zero polynomial of degree at most $d$ to find conditions that hold for all elements of $Y$. These in turn give conditions that must hold for all elements of $X_j$, which will be enough to prove that $X$ is $G$-Noetherian. 

\section{Limits of classical groups of type A}

In this section, we let $G$ be the direct limit of a sequence 
$$\begin{tikzcd}
\SL_{n_1} \arrow[hook,"\iota_1"]{r} & \SL_{n_2} \arrow[hook,"\iota_2"]{r} & \SL_{n_3} \arrow[hook,"\iota_3"]{r} & \dots
\end{tikzcd}$$
of diagonal embeddings given by
\begin{eqnarray*}
\iota_i\colon \SL_{n_i}&\hookrightarrow&\SL_{n_{i+1}}\\
A&\mapsto&\Diag(A,\dots,A,A^{-T},\dots,A^{-T},I_{z_i})
\end{eqnarray*}
with $l_i$ blocks $A$ and $r_i$ blocks $A^{-T}$ for some $l_i\in\NN$ and $r_i,z_i\in\ZZ_{\geq0}$ with $l_i\geq r_i$. We let $V$ be the inverse limit of the sequence
$$\begin{tikzcd}
\gl_{n_1}/\span(I_{n_1}) & \arrow[two heads]{l} \gl_{n_2}/\span(I_{n_2}) & \arrow[two heads]{l} \gl_{n_3}/\span(I_{n_3}) & \arrow[two heads]{l} \dots
\end{tikzcd}$$
where the maps are given by
\begin{eqnarray*}
\gl_{n_{i+1}}/\span(I_{n_{i+1}})&\twoheadrightarrow&\gl_{n_i}/\span(I_{n_i})\\
\begin{pmatrix}P_{11}&\dots&P_{1l_i}&\bullet&\dots&\bullet&\bullet\\\vdots&&\vdots&\vdots&&\vdots&\vdots\\P_{l_i1}&\dots&P_{l_il_i}&\bullet&\dots&\bullet&\bullet\\\bullet&\dots&\bullet&Q_{11}&\dots&Q_{1r_i}&\bullet\\\vdots&&\vdots&\vdots&&\vdots&\vdots\\\bullet&\dots&\bullet&Q_{r_i1}&\dots&Q_{r_ir_i}&\bullet\\\bullet&\dots&\bullet&\bullet&\dots&\bullet&\bullet\end{pmatrix}\mod I_{n_{i+1}}&\mapsto&\sum_{k=1}^{l_i}P_{kk}-\sum_{\ell=1}^{r_i}Q_{\ell\ell}^{T}\mod I_{n_i}.
\end{eqnarray*}
Here each $\bullet$ represents some matrix of the appropriate size. Our goal is to prove that the inverse limit $V$ of this sequence is $G$-Noetherian.\bigskip

Take $\alpha=\#\{i\mid l_i>1\}$, $\beta=\#\{i\mid r_i>0\}$, $\gamma=\#\{i\mid z_i>0\}\in\ZZ_{\geq0}\cup\{\infty\}$. Then we have $\alpha+\beta+\gamma=\infty$, since $G$ is assumed to be infinite-dimensional. Based on $\alpha,\beta,\gamma$ we distinguish the following cases:
\begin{itemize}
\item[(1)] $\alpha+\beta<\infty$;
\item[(2)] $\alpha+\beta=\gamma=\infty$;
\item[(3a)] $\beta=\infty$, $\gamma<\infty$ and $\cha(K)\neq 2$ or $2\nmid n_i$ for all $i\gg0$;
\item[(3b)] $\beta=\infty$, $\gamma<\infty$, $\cha(K)=2$ and $2\mid n_i$ for all $i\gg0$;
\item[(4a)] $\beta+\gamma<\infty$ and $\cha(K)\nmid n_i$ for all $i\gg0$; and
\item[(4b)] $\beta+\gamma<\infty$ and $\cha(K)\mid n_i$ for all $i\gg0$.
\end{itemize}
Note here that if $\gamma<\infty$, then $n_i|n_{i+1}$ for all $i\gg0$. Denote the element of $V$ representated by the sequence of zero matrices by $0$. 

\begin{thm}\label{thm_mainsl}
The space $V$ is $G$-Noetherian. Any $G$-stable closed subset of $V$ is a finite union of irreducible $G$-stable closed subsets. The irreducible $G$-stable closed subsets of $V$ are $\{0\}$ and $V$ together with 
$$
\left\{(P_i\mod I_{n_i})_i\in V~\middle|~\forall i\gg0\colon\rk(P_i,I_{n_i})\leq k\right\}
$$
for $k\in\NN$ in case (1) and together with 
$$
\left\{(P_i\mod I_{n_i})_i\in V~\middle|~\forall i\gg0\colon\tr(P_i)=\mu\right\}
$$
for $\mu\in K$ in cases (3b) and (4b).
\end{thm}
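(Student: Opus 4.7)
My plan is to follow the four-step strategy outlined in Section~2, splitting the argument according to the cases (1)--(4b) determined by $(\alpha,\beta,\gamma)$.

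First I would verify that the listed candidates are genuinely irreducible $G$-stable closed subsets of $V$. Stability is immediate since $\rk(P_i, I_{n_i})$ is a conjugation-invariant, and in cases (3b), (4b) the trace of $P_i$ descends to a well-defined scalar on $V$ because $\cha(K) \mid n_i$ for $i \gg 0$. Closedness is clear because each condition is algebraic and pulled back from a finite level. For irreducibility of the sets $\{(P_i) \mid \rk(P_i, I_{n_i}) \leq k\}$, I would exhibit a dominant morphism from an irreducible variety parametrizing pairs $(\lambda, M_i)$ with $\lambda \in K$ and $M_i$ of rank $\leq k$, sending $(\lambda, M_i)$ to $(M_i + \lambda I_{n_i}) \mod I_{n_i}$. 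Irreducibility of the trace-$\mu$ sets in cases (3b), (4b) follows since they are affine linear subspaces.

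For Noetherianity, fix a $G$-stable proper closed subset $X \subsetneq V$, pick $j$ such that the projection $X_j \subseteq V_j$ is proper, and choose a non-zero $f \in I(X_j)$ of some degree $d$. Pulling back along the projections yields non-zero $g_i \in I(X_i)$ of degree $d$ for all $i \geq j$. Applying polynomial families $A(\Lambda) \in G_i$ to $g_i$ as in Section~2, I would extract an \emph{off-diagonal} coefficient $f_i$ depending only on entries $p_{k\ell}$ with $k \in \mathscr{K}$, $\ell \in \mathscr{L}$ for disjoint index sets arising from two distinct block-positions in the embedding $\Diag(A,\dots,A,A^{-T},\dots,A^{-T},I_{z_i})$; such positions exist for $i \gg 0$ because either $l_i \geq 2$, $r_i \geq 1$ or $z_i \geq 1$ infinitely often. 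Non-vanishing of $f_i$ is guaranteed by showing that the relevant substitution is dominant on the top-graded part of $g_i$. In case~(1), which reduces to an inverse limit of $\gl_{n_i}/\span(I_{n_i})$ under top-left projection, the classification descends directly from Theorem~\ref{thm_maingl} by quotienting the classification of $\SL_\infty$-stable closed subsets of $\gl_\infty$ by $\span(I_\infty)$.

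With a non-zero off-diagonal polynomial in hand, the projection of $X_i$ onto the corresponding off-diagonal submatrix fails to be Zariski-dense. That submatrix carries a natural left--right $\GL$-action (possibly composed with transpose-inverse in the cases involving $A^{-T}$), and its invariant closed subsets are classified by rank. This produces a uniform rank bound on $X$, which combined with the trace invariant in cases (3b), (4b) exhibits $X$ as a finite union of the irreducibles in the statement and shows that any descending chain stabilizes. The main obstacle is the dominance argument ensuring that off-diagonal extraction does not annihilate the top-graded part of $g_i$, together with the separation of cases (3a)/(3b) and (4a)/(4b): when $\cha(K) \mid n_i$ the scalar identity lies in $\sl_{n_i}$, so the trace functional survives as a genuine linear coordinate on $V$ and yields the additional family of $\tr = \mu$ irreducibles, whereas otherwise trace is absorbed into $\span(I_{n_i})$ and provides no new invariants.
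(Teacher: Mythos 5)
Your outline correctly matches the paper's overall strategy: split off case (1) and appeal to Theorem~\ref{thm_maingl}, then in the remaining cases use conjugation by polynomial families $A(\Lambda)\in G_i$ to produce nonzero off-diagonal polynomials in $I(X_i)$, and use the classification of left--right $\GL$-stable closed subsets of a matrix block to deduce rank bounds. You also correctly flag the dominance arguments (needed to show the off-diagonal extraction does not annihilate the top-graded part) as a genuine technical burden, and your explanation of why the trace survives in cases (3b) and (4b) is essentially right.

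However, there is a genuine gap at the end of your argument. You assert that ``a uniform rank bound on $X$ \ldots exhibits $X$ as a finite union of the irreducibles in the statement.'' In cases (2)--(4b), the irreducibles in the statement are only $\{0\}$, $V$ and (in cases (3b), (4b)) the trace hyperplanes $Y_\mu$; there is \emph{no} family of intermediate bounded-rank irreducibles as in case (1). So a uniform rank bound does not by itself give the classification: you must further prove that a nonzero rank bound is impossible, i.e., that bounded tuple-rank forces $X\subseteq\{0\}$. This is a nontrivial \emph{rank escalation} step. The paper (Lemma~\ref{lm_boundedtuplerankimpliesrk=0}, using Lemmas~\ref{lm_topleftinv} and~\ref{lm_higherrank}) shows that once one may assume $l_i+r_i\geq 2$, an element $P\bmod I_{n_{i+1}}$ of $X_{i+1}$ with $\rk(P,I_{n_{i+1}})=k>0$ can be conjugated so that the $l_i+r_i\geq 2$ diagonal blocks each have rank $k$, and then (conjugating the blocks independently inside $G_{i+1}$) their sum projected into $X_i$ has strictly larger tuple-rank; iterating this contradicts any finite uniform bound unless $k=0$. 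Your proposal jumps from the rank bound directly to the classification without this step, and it is precisely this escalation that distinguishes cases (2)--(4b) from case (1) (where $l_i=1$, $r_i=0$ and no escalation is available, leaving a whole tower of rank-bounded irreducibles).
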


Here we call a closed subset $X\subseteq V$ irreducible when the following condition holds: if $X=Y\cup Z$ for (not necessarily $G$-stable) closed subsets $Y,Z\subseteq X$, then $X=Y$ or $X=Z$. The following proposition expresses the irreduciblility of a closed subset of $V$ in terms of the closures of its projections.

\begin{prop}\label{prop_irr2irrinf}
Let
$$\begin{tikzcd}
W_1& \arrow[two heads]{l} W_2 & \arrow[two heads]{l} W_{3} & \arrow[two heads]{l} \dots
\end{tikzcd}$$
be a sequence of finite-dimensional vector spaces with inverse limit $W$. Let $X\subseteq W$ be a closed subset and let $X_i$ be the closure of the projection of $X$ to $W_i$. Then the following are equivalent:
\begin{itemize}
\item[(1)] $X$ is irreducible.
\item[(2)] $X_i$ is irreducible for all $i\geq 1$. 
\item[(3)] $X_i$ is irreducible for all $i\gg0$. 
\end{itemize} 
\end{prop}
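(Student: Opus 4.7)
The implication $(2)\Rightarrow(3)$ is immediate. Write $\pi_i\colon W\to W_i$ for the projection of the inverse limit and $\pi_{j,i}\colon W_j\to W_i$ for the structure map, for $i\leq j$. The crucial identity I would establish first is
$$
X_i = \overline{\pi_{j,i}(X_j)}\quad\text{for all }i\leq j,
$$
obtained by taking closures in the chain $\pi_i(X)\subseteq\pi_{j,i}(X_j)\subseteq X_i$ (the right-hand inclusion uses that $\pi_{j,i}$ is polynomial, hence continuous, and sends $\pi_j(X)$ into $\pi_i(X)$). With this identity in hand, $(3)\Rightarrow(2)$ is immediate: given $i$, pick $j\geq i$ with $X_j$ irreducible, so $X_i$ is irreducible as the closure of the continuous image of an irreducible set.

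\textbf{The implication $(1)\Rightarrow(2)$} is also easy. If $X_i=Y_i\cup Z_i$ with $Y_i,Z_i\subsetneq X_i$ closed, then $X=(X\cap\pi_i^{-1}(Y_i))\cup(X\cap\pi_i^{-1}(Z_i))$ decomposes $X$ into two closed subsets; both must be proper, since for instance $X\subseteq\pi_i^{-1}(Y_i)$ would force $X_i\subseteq Y_i$, contradicting properness of $Y_i$.

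\textbf{The main implication $(2)\Rightarrow(1)$.} Suppose $X=Y\cup Z$ for closed subsets $Y,Z\subseteq X$. Setting $Y_i:=\overline{\pi_i(Y)}$ and $Z_i:=\overline{\pi_i(Z)}$, we have $X_i=Y_i\cup Z_i$, so by hypothesis either $X_i=Y_i$ or $X_i=Z_i$ for each $i$. The key combinatorial point is that $I:=\{i\mid X_i=Y_i\}$ is \emph{downward closed}: applying the compatibility identity above to both $X$ and $Y$, if $X_j=Y_j$ and $i\leq j$ then $X_i=\overline{\pi_{j,i}(X_j)}=\overline{\pi_{j,i}(Y_j)}=Y_i$. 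The analogous statement holds for $J:=\{i\mid X_i=Z_i\}$, and since $I\cup J=\NN$ with both downward closed, at least one of them must equal all of $\NN$. In the case $I=\NN$, we conclude $Y=\varprojlim Y_i=\varprojlim X_i=X$, so $X$ is irreducible.

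\textbf{Expected obstacle.} The only delicate input is the recovery $Y=\varprojlim\overline{\pi_i(Y)}$ for an arbitrary closed $Y\subseteq W$. This follows from the definition of closed subsets of $W$ as inverse limits of closed subsets $S_i\subseteq W_i$, together with the minimality of $\overline{\pi_i(Y)}$ among the closed subsets of $W_i$ whose preimage contains $Y$. Once this is available, the argument reduces to the trivial observation that two downward-closed subsets of $\NN$ whose union is all of $\NN$ cannot both be proper.
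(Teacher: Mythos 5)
Your proof is correct and takes essentially the same approach as the paper: the $(1)\Rightarrow(2)$ step is identical, and your ``downward-closed'' argument for $(2)\Rightarrow(1)$ is the same observation the paper uses in its contrapositive proof of $(3)\Rightarrow(1)$ (namely that $Y_j=X_j$ forces $Y_i=X_i$ for $i\leq j$, so once $Y_i\subsetneq X_i$ holds it persists for all larger indices), combined with the recovery $Y=\varprojlim\overline{\pi_i(Y)}$. You prove more implications than the minimal cycle $(1)\Rightarrow(2)\Rightarrow(3)\Rightarrow(1)$ the paper uses, and you spell out the compatibility identity $X_i=\overline{\pi_{j,i}(X_j)}$ which the paper leaves implicit, but the substance is the same.
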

\begin{proof}
Suppose that $X_i$ is reducible for some $i\in\NN$. Then $X_i=Y\cup Z$ for some closed subsets $Y,Z\subsetneq X_i$. In this case, we see that
$$
X=(\pr_i^{-1}(Y)\cap X)\cup(\pr_i^{-1}(Z)\cap X),\quad \pr_i^{-1}(Y)\cap X,\pr_i^{-1}(Z)\cap X\subsetneq X
$$ 
and so $X$ is reducible. This establishes $(1)\Rightarrow(2)$. The implication $(2)\Rightarrow(3)$ is trivial. So next, if $X=Y\cup Z$ for some closed subsets $Y,Z\subsetneq X$ with closures $Y_i,Z_i$ in $W_i$, then $X_i=Y_i\cup Z_i$ for all $i\in\NN$ and $Y_i,Z_i\subsetneq X_i$ for all $i\gg 0$. So in this case, we see that $X_i$ is reducible for $i\gg0$.
\end{proof}

\subsection{The case \texorpdfstring{$\alpha+\beta<\infty$}{alpha+beta<oo}}

By replacing
$$\begin{tikzcd}
\SL_{n_1} \arrow[hook,"\iota_1"]{r} & \SL_{n_2} \arrow[hook,"\iota_2"]{r} & \SL_{n_3} \arrow[hook,"\iota_3"]{r} & \dots
\end{tikzcd}$$
with some infinite subsequence, we may assume that $(l_i,r_i)=(1,0)$ and $z_i>0$ for all $i\in\NN$. Then, by replacing the sequence by a supersequence, we may assume that $n_i=i$ and $z_i=1$ for all $i\in\NN$. So we consider the inverse limit $V=\gl_\infty/\span(I_\infty)$ of the sequence
$$\begin{tikzcd}
\gl_{1}/\span(I_{1}) & \arrow[two heads]{l} \gl_{2}/\span(I_{2}) & \arrow[two heads]{l} \gl_{3}/\span(I_{3}) & \arrow[two heads]{l} \dots
\end{tikzcd}$$
acted on by the group $G=\SL_\infty$. The $\SL_\infty$-stable closed subsets of $\gl_\infty/\span(I_\infty)$ correspond one-to-one to the $\SL_\infty$-stable closed subsets $X$ of $\gl_\infty$ such that 
$$
X+\span(I_{\infty})=X.
$$
Theorem \ref{thm_maingl} therefore tells us exactly what the $G$-stable closed subsets of $V$ are. The next proposition shows that Theorem \ref{thm_maingl} implies case (1) of Theorem \ref{thm_mainsl} . 

\begin{prop}\label{prop_tuplerank=suptuplerank}
Let $P_1,\dots,P_k$ be elements of $\gl_\infty$. Then we have
$$
\rk(P_1,\dots,P_k)=\sup\{\rk(\pr_n(P_1),\dots,\pr_n(P_k))\mid n\in\NN\}.
$$
\end{prop}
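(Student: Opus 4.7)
The plan is to turn both sides into nested extrema over $\PP^{k-1}$ and $\NN$, and then to swap them using Noetherianity of $\PP^{k-1}$.

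For $\mu=(\mu_1:\dots:\mu_k)\in\PP^{k-1}$ and $n\in\NN$, set
$$
f_n(\mu)=\rk\!\left(\pr_n\!\left(\mu_1P_1+\dots+\mu_kP_k\right)\right)\in\ZZ_{\geq0}.
$$
Since $\pr_n$ is linear and commutes with taking linear combinations, the definitions give
$$
\rk\bigl(\pr_n(P_1),\dots,\pr_n(P_k)\bigr)=\inf_{\mu\in\PP^{k-1}} f_n(\mu),
\qquad
\rk(P_1,\dots,P_k)=\inf_{\mu\in\PP^{k-1}} \sup_{n\in\NN} f_n(\mu),
$$
the second equality using that $\rk(M)=\sup_n\rk(\pr_n(M))$ for $M\in\gl_\infty$. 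Also note that $\pr_n(M)$ is a submatrix of $\pr_{n+1}(M)$, so $f_n\leq f_{n+1}$ pointwise on $\PP^{k-1}$. The identity to be proved becomes the min-max exchange
$$
\inf_{\mu}\sup_{n} f_n(\mu)=\sup_{n}\inf_{\mu} f_n(\mu).
$$

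The inequality $\geq$ is the standard trivial one: for each fixed $n$ and each $\mu$, $\inf_\nu f_n(\nu)\leq f_n(\mu)\leq\sup_m f_m(\mu)$; taking $\inf_\mu$ on the right and then $\sup_n$ on the left yields the desired bound.

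For the non-trivial inequality $\leq$, if $\sup_n\inf_\mu f_n(\mu)=\infty$ there is nothing to show, so assume the sup equals some $r\in\ZZ_{\geq 0}$. For each $n$ define
$$
S_n=\{\mu\in\PP^{k-1}\mid f_n(\mu)\leq r\},
$$
cut out by the vanishing of all $(r+1)\times(r+1)$ minors of the matrix $\pr_n(\sum_i\mu_iP_i)$ — a system of homogeneous polynomials in $\mu$ — so $S_n$ is Zariski-closed in $\PP^{k-1}$. It is non-empty by the assumption $\inf_\mu f_n(\mu)\leq r$, and the chain $S_1\supseteq S_2\supseteq\dots$ is descending because $f_n\leq f_{n+1}$. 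Since $\PP^{k-1}$ is a Noetherian topological space, the chain stabilizes: $S_n=S_N$ for some $N$ and all $n\geq N$, and this common set is non-empty. For any $\mu$ in it we have $f_n(\mu)\leq r$ for every $n$, hence $\sup_n f_n(\mu)\leq r$, and consequently $\inf_\mu\sup_n f_n(\mu)\leq r$, as required.

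The only substantive step is the min-max swap, and the only tool it uses is the descending chain condition on closed subsets of $\PP^{k-1}$; the mild subtlety is just being careful with the case that some of the quantities involved are infinite, which is handled by separating the cases $r<\infty$ and $r=\infty$ above.
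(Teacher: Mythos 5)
Your proof is correct and follows essentially the same route as the paper: the sets $S_n$ you introduce are exactly the paper's sets $Y_n$, and the decisive step in both is the stabilization of this descending chain of closed subsets of $\PP^{k-1}$ by Noetherianity. Your framing of the identity as a min--max exchange and the explicit separation into the cases $r<\infty$ and $r=\infty$ are just a more spelled-out presentation of the same argument.
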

\begin{proof}
We have $\rk(\pr_n(P_1),\dots,\pr_n(P_k))\leq \rk(\mu_1P_1+\dots+\mu_kP_k)$ for all $n\in\NN$ and $(\mu_1:\dots:\mu_k)\in\PP^{k-1}$. So 
$$
r:=\sup\{\rk(\pr_n(P_1),\dots,\pr_n(P_k))\mid n\in\NN\}\leq \rk(P_1,\dots,P_k)
$$
with equality when $r=\infty$. Suppose that $r<\infty$ and consider the descending chain 
$$
Y_1\supseteq Y_2\supseteq Y_3\supseteq Y_4\supseteq\dots
$$
of closed subsets of $\PP^{k-1}$ defined by 
$$
Y_n=\left\{(\mu_1:\dots:\mu_k)\in\PP^{k-1}~\middle|~\rk(\mu_1\pr_n(P_1)+\dots+\mu_k\pr_n(P_k))\leq r\right\}.
$$
By construction, each $Y_n$ is non-empty. And by the Noetherianity of $\PP^{k-1}$, the chain stabilizes. Let $(\mu_1:\dots:\mu_k)\in\PP^{k-1}$ be an element contained in $Y_n$ for all $n\in\NN$. Then we see that $\rk(P_1,\dots,P_k)\leq \rk(\mu_1P_1+\dots+\mu_kP_k)\leq r$.
\end{proof}

So we proceed to prove Theorem \ref{thm_maingl}. The following proposition, which is due to Jan Draisma, connects the tuple rank of a matrix $P$ with the identity matrix to the rank of off-diagonal submatrices of matrices similar to $P$.

\begin{prop}\label{prop_tuplerank}
Let $k,m,n\in\ZZ_{\geq0}$ be such that $n\geq 2m\geq 2(k+1)$, let $\mathscr{K},\mathscr{L}$ be disjoint subsets of $[n]$ of size $m$ and let $P$ be an $n\times n$ matrix. Then $\rk(P,I_n)\leq k$ if and only if the submatrix $Q_{\mathscr{K},\mathscr{L}}$ of $Q$ has rank at most $k$ for every $Q\sim P$.
\end{prop}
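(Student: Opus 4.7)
The plan is to prove the two implications separately. For the forward direction, if $\rk(P, I_n) \leq k$ then the infimum in the definition of $\rk(P, I_n)$ must be attained by some $(\mu:\nu) \in \PP^1$ with $\mu \neq 0$ (since $\rk(\nu I_n) = n > k$), giving $\rk(P - \lambda I_n) \leq k$ for some $\lambda \in K$. For any $Q \sim P$ the matrix $Q - \lambda I_n$ is conjugate to $P - \lambda I_n$ and so has rank $\leq k$; since $\mathscr{K} \cap \mathscr{L} = \emptyset$, the submatrix $Q_{\mathscr{K}, \mathscr{L}}$ equals $(Q - \lambda I_n)_{\mathscr{K}, \mathscr{L}}$ and therefore has rank at most $k$.

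For the reverse direction I would argue by contraposition: assume $\rk(P - \lambda I_n) \geq k + 1$ for every $\lambda \in K$, and construct $Q \sim P$ with $\rk(Q_{\mathscr{K}, \mathscr{L}}) \geq k + 1$. The first move is to upgrade the hypothesis to $\overline{K}$: any $\mu \in \overline{K} \setminus K$ has at least two Galois conjugates sharing the same geometric multiplicity, so $g_\mu := \dim \ker(P - \mu I_n) \leq n/2 \leq n - (k + 1)$ holds automatically. Combined with the hypothesis, this gives $g_\mu \leq n - k - 1$ for every $\mu \in \overline{K}$.

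The heart of the argument is to exhibit a subspace $W \subseteq K^n$ of dimension $k + 1$ with $W \cap P^{-1}(W) = 0$, equivalently $\dim(W + PW) = 2(k+1)$. I would show by a dimension count that the locus of bad $W$ in $\Gr(k+1, n)$ is a proper closed subvariety. Any bad $W$ contains a line $\ell$ with $P\ell \subseteq W$; if $\ell$ is an eigenline, the incidence variety of such pairs $(\ell, W)$ has dimension at most $(g_{\max} - 1) + k(n - k - 1) \leq (k+1)(n - k - 1) - 1$ thanks to $g_{\max} \leq n - k - 1$, while if $\dim(\ell + P\ell) = 2$ the incidence variety has dimension at most $(n - 1) + (k - 1)(n - k - 1)$, which is strictly less than $(k+1)(n - k - 1)$ by exactly $n - 2k - 1 \geq 1$. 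A generic $W$ is therefore good, and since $\Gr(k+1, n)$ is rational over the infinite field $K$, such a $W$ exists over $K$. I would then extend $W$ to $U = W \oplus F$ of dimension $m$ with $F$ chosen disjoint from $W + PW$ (possible since $2(k+1) + (m - k - 1) = m + k + 1 \leq n$), so that $PW \cap U = 0$ and $\rk(P|_U\colon U \to K^n/U) \geq k + 1$. A similarly generic choice over $K$ of an $(n - 2m)$-dimensional subspace $C$ disjoint from $U + PU$ and of any $m$-dimensional complement $W'$ of $U \oplus C$ then produces a basis of $K^n$ indexed by $\mathscr{L}$, $\mathscr{K}$, and $[n] \setminus (\mathscr{K} \cup \mathscr{L})$ in which the matrix of $P$ is a conjugate $Q$ whose block $Q_{\mathscr{K}, \mathscr{L}}$ represents the map $\pi_{W'} \circ P|_U \colon U \to W'$ and hence has rank equal to $\rk(P|_U) \geq k + 1$.

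The main obstacle I expect is the Grassmannian dimension count in the previous paragraph, which is the only step that genuinely uses the hypothesis $\rk(P - \lambda I_n) \geq k + 1$, through the bound $g_\mu \leq n - k - 1$ on geometric multiplicities. Everything else — the Galois upgrade from $K$ to $\overline{K}$, the extension of $W$ to $U$, and the choice of complementary subspaces $C$ and $W'$ over $K$ — is routine linear algebra combined with Zariski density of $K$-rational points on the relevant Grassmannians.
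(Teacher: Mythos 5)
Your proposal is correct and uses essentially the same argument as the paper: both hinge on the dimension count for the incidence variety $\{(W,[v]) : v, Pv \in W\}$ in $\Gr_{k+1}(V)\times\PP(V)$, split into eigenvector and non-eigenvector fibres. You run the count contrapositively and then extend a good $(k+1)$-plane $W$ to an $m$-plane $U$, whereas the paper first reduces to $m=k+1$ and argues directly that surjectivity of the Grassmannian projection forces an eigenvalue of geometric multiplicity at least $n-k$; these are cosmetic reorganizations of the same proof.
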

\begin{proof}
Suppose that $\rk(P,I_n)\leq k$. Let $Q\sim P$ be a similar matrix. Then $\rk(Q,I_n)\leq k$. So since $\mathscr{K}\cap \mathscr{L}=\emptyset$ and the off-diagonal entries of $Q$ and $Q-\lambda I_n$ are equal for all $\lambda\in K$, we see that $\rk(Q_{\mathscr{K},\mathscr{L}})\leq k$.\bigskip

Suppose that the submatrix $Q_{\mathscr{K},\mathscr{L}}$ has rank at most $k$ for every $Q\sim P$. Then this statement still holds when we replace $\mathscr{K}$ and $\mathscr{L}$ by subsets of themselves of size $k+1$. This reduces the proposition to the case $m=k+1$. Now the statement we want to prove is implied by the following coordinate-free version:
\begin{itemize}
\item[(*)] Let $V$ be a vector space of dimension $n$ and let $\varphi\colon V\rightarrow V$ be an endomorphism. If the induced map $\varphi\colon W\rightarrow V/W$ has a non-trivial kernel for all $(k+1)$-dimensional subspaces $W$ of $V$, then $\varphi$ has an eigenvalue of geometric multiplicity at least $n-k$.
\end{itemize}
Indeed, taking $\varphi\colon K^n\to K^n$ the endomorphism corresponding to $P$ and $W\subseteq K^n$ a $(k+1)$-dimensional subspace, we can first replace $P$ be a matrix $Q\sim P$ to get $W=K^{k+1}\times\{0\}$. Since $Q$ is similar to all its conjugates by permutation matrices, we know that $\det(Q_{\mathscr{K},\mathscr{L}})=0$ for all disjoint subsets of $\mathscr{K},\mathscr{L}\subseteq[n]$ of size $m$. Hence $Q_{[n]\setminus[k+1],[k+1]}$ has rank at most $k$. So the induced map $W\to V/W$ has a non-trivial kernel. We conclude from (*) that 
$$
\rk(P-\lambda I_n)=\rk(Q-\lambda I_n)\leq n-(n-k)=k
$$
for some $\lambda\in K$. So $\rk(P,I_n)\leq k$.\bigskip

To prove (*), consider the incidence variety
$$
Z=\left\{(W,[v])\in\Gr_{k+1}(V)\times\PP(V)\mid v,\varphi(v)\in W\right\}
$$
and let $\pi_1,\pi_2$ be the projections from $Z$ to the Grassmannian $\Gr_k(V)$ and to $\PP(V)$. By assumption $\pi_1$ is surjective. So we have 
$$
\dim Z \geq \dim(\Gr_{k+1}(V))=(k+1)(n-k-1).
$$
On the other hand, let $v\in V\setminus\{0\}$ be a non-eigenvector of $\varphi$. Then $\pi_1(\pi_2^{-1}([v]))$ consists of all $W\in\Gr_{k+1}(V)$ containing $\span(v,\varphi(v))$ and these form the Grassmannian $\Gr_{k-1}(V/\span(v,\varphi(v)))$ of dimension $(k-1)(n-k-1)$. Thus the union of the fibres $\pi_2^{-1}([v])$ for $v$ not an eigenvector of $\varphi$ has dimension at most 
$$
(k-1)(n-k-1)+\dim(\PP(V))=(k+1)(n-k-1)+2k+1-n.
$$
This dimension is strictly smaller than $\dim(Z)$. Let $v$ be an eigenvector of $\varphi$. Then $\pi_1(\pi_2^{-1}([v]))$ consists of all $W\in\Gr_{k+1}(V)$ with $v\in W$ and these form the Grassmannian $\Gr_k(V/\span(v))$ of dimension $k(n-k-1)$. So we see that the union of the eigenspaces of $\varphi$ must have dimension at least $\dim(Z)-k(n-k-1)+1\geq n-k$. Hence some eigenspace of $\varphi$ must have dimension al least $n-k$.
\end{proof}

\begin{de}
For $n\in\NN$, we call a polynomial $f\in K[\gl_n]$ off-diagonal if 
$$
f\in K[p_{k\ell}\mid k\in\mathscr{K},\ell\in\mathscr{L}]
$$
for some disjoint subsets $\mathscr{K},\mathscr{L}\subset[n]$ of size $m\leq (n-1)/2$. 
\end{de}

\begin{lm}\label{lm_lowranklowtuplerank}
Let $n\in\NN$ be an integer, let $Y$ be an $\SL_n$-stable closed subset of $\gl_n$ and suppose that $I(Y)$ contains a non-zero off-diagonal polynomial $f$. Then $\rk(P,I_n)<\deg(f)$ for all $P\in Y$.
\end{lm}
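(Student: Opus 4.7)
The plan is to reduce to Proposition \ref{prop_tuplerank} by bounding the rank of the $(\mathscr{K},\mathscr{L})$-block of every matrix similar to $P$. Fix $P\in Y$ and write $f\in K[p_{k\ell}\mid k\in\mathscr{K},\ell\in\mathscr{L}]$ for disjoint $\mathscr{K},\mathscr{L}\subseteq[n]$ of size $m\leq(n-1)/2$. Viewing $f$ as a non-zero polynomial on $\Mat_{m\times m}$, I would first observe that, since $Y$ is $\SL_n$-stable and $f$ depends only on the $(\mathscr{K},\mathscr{L})$-entries of its argument, $f$ vanishes on
\[
S=\{(APA^{-1})_{\mathscr{K},\mathscr{L}}\mid A\in\SL_n\}\subseteq\Mat_{m\times m},
\]
and hence on its Zariski closure $\bar{S}$.

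The key step will be to show that $\bar{S}$ is invariant under the $\GL_m\times\GL_m$-action on $\Mat_{m\times m}$ given by $(B_1,B_2)\cdot M=B_1MB_2^{-1}$. Because $2m<n$, I can pick any $j\in[n]\setminus(\mathscr{K}\cup\mathscr{L})$ and, for each pair $(B_1,B_2)$, form an element of $\SL_n$ that is block-diagonal with respect to the partition $[n]=\mathscr{K}\sqcup\mathscr{L}\sqcup([n]\setminus(\mathscr{K}\cup\mathscr{L}))$, carrying $B_1$ on $\mathscr{K}$, $B_2$ on $\mathscr{L}$, the scalar $(\det B_1\det B_2)^{-1}$ in position $(j,j)$, and $1$ on the remaining diagonal. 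This is exactly where the hypothesis $m\leq(n-1)/2$ is used: it provides the extra coordinate needed to absorb the determinant while staying in $\SL_n$. Conjugating $APA^{-1}$ by such an element multiplies its $(\mathscr{K},\mathscr{L})$-block by $B_1$ on the left and $B_2^{-1}$ on the right, so $S$, and hence $\bar{S}$, is $\GL_m\times\GL_m$-stable. The orbits of this action on $\Mat_{m\times m}$ are the rank strata, and their closures are totally ordered by inclusion, so $\bar{S}=\{M\in\Mat_{m\times m}\mid\rk(M)\leq r^*\}$ for some $r^*$. The existence of the non-zero polynomial $f$ vanishing on $\bar{S}$ forces $r^*\leq m-1$; moreover, the ideal of this rank variety is a homogeneous ideal generated by $(r^*+1)$-minors, so every non-zero polynomial in it has degree at least $r^*+1$, whence $r^*\leq\deg(f)-1$.

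To finish, for any $Q\sim P$, after scaling a conjugating matrix into $\SL_n$ (which can be done after base change to $\overline{K}$, with rank over $K$ unchanged) we obtain $\rk(Q_{\mathscr{K},\mathscr{L}})\leq r^*$. Using conjugation by signed permutation matrices in $\SL_n$, the same bound holds for every disjoint pair of subsets of $[n]$ of size $m$, and hence, by passing to submatrices, for every disjoint pair of size $m'\leq m$. Taking $m'=r^*+1\leq m$ and applying Proposition \ref{prop_tuplerank} with $k=r^*$, whose hypotheses $n\geq 2m'\geq 2(k+1)$ follow from $n\geq 2m$ and $m\geq r^*+1$, yields $\rk(P,I_n)\leq r^*\leq\deg(f)-1<\deg(f)$. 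The only substantive obstacle is the $\GL_m\times\GL_m$-invariance of $\bar{S}$ established in the second step; once this is in hand, everything else is routine bookkeeping with the proposition.
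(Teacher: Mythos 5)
Your argument is correct and follows essentially the same route as the paper's own proof: cut out the $(\mathscr{K},\mathscr{L})$-block, use a block-diagonal embedding $\GL_m\times\GL_m\hookrightarrow\SL_n$ (with a determinant-absorbing diagonal entry in a spare coordinate) to get $\GL_m\times\GL_m$-invariance of the projected set, identify it as a rank variety whose ideal is generated in degree $r^*+1$, and feed the resulting rank bound into Proposition \ref{prop_tuplerank}. The only cosmetic difference is that the paper works with the closure of the projection of all of $Y$ rather than of the orbit of a single $P$, and it invokes Proposition \ref{prop_tuplerank} with the original $m$ instead of shrinking to $m'=r^*+1$; you are also slightly more explicit about the $\SL_n$ versus $\GL_n$ conjugation issue, which the paper glosses over.
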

\begin{proof}
Let $\mathscr{K},\mathscr{L}\subset[n]$ be disjoint subsets of size $m\leq n/2$ and let 
$$
f\in K[p_{k\ell}\mid k\in\mathscr{K},\ell\in\mathscr{L}]\cap I(Y)
$$
be a non-zero element. If $m=0$, then $f$ is constant and $Y=\emptyset$. So in particular, $\rk(P,I_n)<\deg(f)$ for all $P\in Y$. For $m>0$, let $Z$ be the closure of the set 
$$
\{(y_{k\ell})_{k\in\mathscr{K},\ell\in\mathscr{L}}\mid(y_{k\ell})_{k,\ell}\in Y\}
$$
in $\gl_m$. Then $f\in I(Z)$. By conjugating with with $\pm1$ times a permutation matrix, we may assume that $\mathscr{K}=[m]$ and $\mathscr{L}=[2m]\setminus[m]$. Now consider the map
\begin{eqnarray*}
\GL_m\times\GL_m&\to&\SL_n\\
(A,B)&\mapsto&\Diag(A,B,I_{n-2m-1},\det(AB)^{-1}).
\end{eqnarray*}
Since $Y$ is $\GL_m\times\GL_m$-stable, we see that $Z$ is closed under $\GL_m\times\GL_m$ acting by left and right multiplication. So $Z$ must consist of all matrices of rank at most $\ell$ for some $\ell\leq m$. Since $f\in I(Z)$, we see that $\ell<\min(m,\deg(f))$. So by Proposition \ref{prop_tuplerank}, we see that $Y$ consists of matrices $P$ such that $\rk(P,I_n)<\min(m,\deg(f))\leq\deg(f)$.
\end{proof}

\begin{re}\label{re_lowranklowtuplerank}
Let $Y$ be a $\SL_n$-stable closed subset of $\gl_n/\span(I_n)$. Then we can apply Lemma \ref{lm_lowranklowtuplerank} to $Y$ by considering its inverse image in $\gl_n$. So if $I(Y)$ contains a non-zero off-diagonal polynomial $f$, then $\rk(P,I_n)<\deg(f)$ for all $(P\mod I_n)\in Y$.
\end{re}

Let $X$ be a proper $\SL_\infty$-stable closed subset of $\gl_\infty$. Denote the closure of the projection of $X$ to $\gl_n$ by $X_n$ and let $I(X_n)\subseteq K[\gl_n]$ be its corresponding ideal.

\begin{lm}\label{lm_boundedtrk}
Let $m$ be a positive integer and suppose that $I(X_m)$ contains a non-zero polynomial $f$. Then $rk(P,I_\infty)<\deg(f)$ for all $P\in X$.
\end{lm}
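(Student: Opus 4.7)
The plan is to convert the non-zero polynomial $f\in I(X_m)$ of degree $d:=\deg(f)$ into a non-zero off-diagonal polynomial of degree $d$ inside $I(X_n)$ for every $n\ge 2m+1$, then apply Lemma~\ref{lm_lowranklowtuplerank} to bound tuple ranks of projections, and finally pass to the limit via Proposition~\ref{prop_tuplerank=suptuplerank}.

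First I would pull $f$ back along the top-left $m\times m$ truncation $\gl_n\twoheadrightarrow\gl_m$ to obtain $g_n\in I(X_n)$ of degree $d$; this $g_n$ involves only the entries of the top-left $m\times m$ block and so is not yet off-diagonal. To move it off-diagonal I would conjugate by the $\SL_n$-element $A_\lambda:=I_n+\lambda E$, where $E:=\sum_{i=1}^m E_{i,m+i}$ (matrix units) and $\lambda\in K$. Since $E^2=0$ one has $A_\lambda^{-1}=I_n-\lambda E\in\SL_n$, and a direct computation shows that for $i,j\in[m]$,
$$
(A_\lambda^{-1}PA_\lambda)_{ij}=P_{ij}-\lambda P_{m+i,j}.
$$
Consequently
$$
g_n(A_\lambda^{-1}PA_\lambda)=f\bigl(P_{[m],[m]}-\lambda\,P_{\{m+1,\ldots,2m\},[m]}\bigr)
$$
is a polynomial in $\lambda$ of degree $\le d$ whose coefficients lie in $K[\gl_n]$, and for each $\lambda\in K$ it vanishes on $X_n$ since $X_n$ is $\SL_n$-stable. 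The Vandermonde argument recalled in the structural outline then places every $\lambda$-coefficient into $I(X_n)$.

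The coefficient of $\lambda^d$ is, up to sign, $f_d\bigl(P_{\{m+1,\ldots,2m\},[m]}\bigr)$, where $f_d$ is the non-zero homogeneous top-degree component of $f$. Because the entries $p_{k\ell}$ with $k\in\{m+1,\ldots,2m\}$, $\ell\in[m]$ are algebraically independent in $K[\gl_n]$, this is a non-zero element of $K[p_{k\ell}\mid k\in\{m+1,\ldots,2m\},\,\ell\in[m]]$ of degree $d$, hence an off-diagonal polynomial in $I(X_n)$ of degree $d$. Lemma~\ref{lm_lowranklowtuplerank} then gives $\rk(Q,I_n)\le d-1$ for every $Q\in X_n$. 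For $P\in X$ and $n\ge 2m+1$ we thus have $\rk(\pr_n(P),I_n)\le d-1$; since the sequence $\rk(\pr_n(P),I_n)$ is non-decreasing in $n$, Proposition~\ref{prop_tuplerank=suptuplerank} yields $\rk(P,I_\infty)\le d-1<d=\deg(f)$.

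The main point is the specific choice of $A_\lambda$: the nilpotent $E$ produces a conjugation whose top-left $m\times m$ block becomes an affine combination of the original top-left block and the off-diagonal block $P_{\{m+1,\ldots,2m\},[m]}$, and extracting the $\lambda^d$-coefficient isolates the top-degree part of $f$ evaluated on a genuinely off-diagonal submatrix. I expect this to be the only subtle step; the remaining pieces---transferring the equation to large $n$, extracting coefficients via Vandermonde, bounding tuple ranks, and taking the sup---are direct applications of results already established.
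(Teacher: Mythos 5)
Your proof is correct and follows essentially the same route as the paper: pull $f$ back to $\gl_n$ for $n\geq 2m+1$, conjugate by the unipotent $I_n+\lambda E$ (the paper's $A(\lambda)$ is exactly this matrix), use the Vandermonde argument to extract the top $\lambda$-coefficient $\pm f_{\deg f}(P_{\{m+1,\dots,2m\},[m]})$, and invoke Lemma~\ref{lm_lowranklowtuplerank} together with Proposition~\ref{prop_tuplerank=suptuplerank}. The only cosmetic differences are the direction of conjugation (both work since $A(\lambda)^{-1}=A(-\lambda)$) and that you skip the paper's ``without loss of generality $\deg(f)<m$'' remark, which is harmless since Lemma~\ref{lm_lowranklowtuplerank} already delivers the bound $\rk(Q,I_n)<\deg(f)$ unconditionally.
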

\begin{proof}
Note that the morphism $X_n\rightarrow X_m$ is dominant for all positive integers $m\leq n$. So it suffices to prove that $\rk(\pr_n(P),I_n)<\deg(f)$ for $n\gg0$. Let $n\geq 2m+1$ be an integer. Then $f$ induces the element
$$
g=\left(\begin{pmatrix}P&Q&\bullet\\R&S&\bullet\\\bullet&\bullet&\bullet\end{pmatrix}\mapsto f(P)\right)
$$
of $I(X_n)$ where $P,Q,R,S\in\gl_m$. This allows us to assume that $\deg(f)<m$ without loss of generality. For $\lambda\in K$, consider the matrix
$$
A(\lambda)=\begin{pmatrix}I_m&\lambda I_m\\&I_m\\&&I_{n-2m}\end{pmatrix}\in\SL_n.
$$
We have
$$
A(\lambda)\begin{pmatrix}P&Q&\bullet\\R&S&\bullet\\\bullet&\bullet&\bullet\end{pmatrix}A(\lambda)^{-1}=\begin{pmatrix}P+\lambda R&Q+\lambda (S-P)-\lambda^2R&\bullet\\R&S-\lambda R&\bullet\\\bullet&\bullet&\bullet\end{pmatrix}
$$
for all $\lambda\in K$. So we see that if we let $A(\lambda)$ act on $g$, we obtain the element
$$
h_\lambda=\left(\begin{pmatrix}P&Q&\bullet\\R&S&\bullet\\\bullet&\bullet&\bullet\end{pmatrix}\mapsto f(P+\lambda R)\right)
$$
of $I(X_n)$. Let $k+1$ be the degree of $f$ and let $f_{k+1}$ be the homogeneous part of $f$ of degree $k+1$. Then the homogeneous part of $h_\lambda$ of degree $k+1$ in $\lambda$ equals the polynomial $\lambda^{k+1}f_{k+1}(R)$. Since the field $K$ is infinite, the polynomial $f_{k+1}(R)$ is a linear combination of the $h_\lambda$. Hence $f_{k+1}(R)\in I(X_n)$. So $\rk(P,I_n)<\deg(f)$ for all $P\in X_n$ by Lemma \ref{lm_lowranklowtuplerank} and therefore $\rk(P,I_\infty)<\deg(f)$ for all $P\in X$.
\end{proof}

\begin{lm}\label{lm_randomtopleft}
Let $k<n$ be non-negative integers and let $P\in\gl_{2n}$ and $Q\in\gl_n$ be matrices with $\rk(P)=k$ and $\rk(Q)\leq k$. Then $P$ is similar to
$$
\begin{pmatrix}Q&Q_{12}\\Q_{21}&Q_{22}\end{pmatrix}
$$
for some $Q_{12},Q_{21},Q_{22}\in\gl_n$.
\end{lm}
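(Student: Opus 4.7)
My plan is to realize $P$ and a similar matrix with top-left block $Q$ as rank-$k$ products. Since $\rk(P)=k$, write $P=UV$ with $U\in K^{2n\times k}$, $V\in K^{k\times 2n}$ both of rank $k$. The key ingredient is the classical fact that for such rank-$k$ factorizations with $m\geq k$, the conjugacy class of $UV\in\gl_m$ is determined by $VU\in\gl_k$: since $U$ is injective and $V$ is surjective, the identity $\ker(UV)^j=V^{-1}(\ker(VU)^{j-1})$ yields
$$
\dim\ker(UV)^j=(m-k)+\dim\ker(VU)^{j-1}\qquad(j\geq 1),
$$
while all elementary divisors of $UV$ and $VU$ with irreducible part $\neq x$ coincide. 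Hence if $V'U'=VU$ for another rank-$k$ factorization of shapes $(2n\times k,\,k\times 2n)$, then $U'V'$ and $UV=P$ have the same rational canonical form, i.e.\ $U'V'\sim P$.

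To construct the factorization giving top-left block $Q$, split $U'=\begin{pmatrix}U'_1\\U'_2\end{pmatrix}$ and $V'=(V'_1\mid V'_2)$ with $U'_1,U'_2\in K^{n\times k}$ and $V'_1,V'_2\in K^{k\times n}$, so that the top-left block of $U'V'$ equals $U'_1V'_1$ and $V'U'=V'_1U'_1+V'_2U'_2$. Since $k<n$, I choose $U'_1\in K^{n\times k}$ of rank $k$ whose column space contains $\im Q$ (possible because $\rk Q\leq k$), and pick a left inverse $L\in K^{k\times n}$ of $U'_1$. Set $V'_1:=LQ$; then $U'_1V'_1=(U'_1L)Q=Q$ since $U'_1L$ is the projection onto $\im U'_1\supseteq\im Q$. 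Next, pick any $V'_2\in K^{k\times n}$ of rank $k$ with a right inverse $R\in K^{n\times k}$, and set $U'_2:=R(VU-V'_1U'_1)$. Then $V'U'=V'_1U'_1+V'_2U'_2=VU$; both $U'$ and $V'$ have rank $k$ (witnessed by $U'_1$ and $V'_2$ respectively); and $P':=U'V'$ has rank $k$, has top-left block $Q$, and is similar to $P$ by the classical fact above.

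The main obstacle is justifying the classical fact that $V'U'=VU$ forces $U'V'\sim UV$ for rank-$k$ factorizations. It reduces to the kernel-dimension identity above (which controls the $x$-primary part) together with the agreement of characteristic polynomials up to a factor $x^{m-k}$ and of minimal polynomials up to $x$-adic factors (which controls the rest); the whole argument is characteristic-free via the $K[x]$-module viewpoint. The hypothesis $k<n$ is used only to guarantee the existence of rank-$k$ matrices in $K^{n\times k}$ and $K^{k\times n}$ (in fact $k\leq n$ would suffice).
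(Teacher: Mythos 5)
Your proof is correct, and it takes a genuinely different route from the paper. The paper's proof invokes Proposition \ref{prop_tuplerank} (proved via an incidence-variety dimension count on Grassmannians): since $\rk(P,I_{2n})=k$ and $k<n$, that proposition forces some conjugate of $P$ to have a bottom-left $n\times n$ block of rank exactly $k$, and then two further explicit conjugations (first by $\Diag(g,I_n)$ to align kernels, then by a unipotent block matrix) turn the top-left block into $Q$. You instead take a full-rank factorization $P=UV$ and use the classical fact that, for fixed shape $(m\times k,\,k\times m)$ rank-$k$ factorizations, the similarity class of $UV$ is a function of $VU$ alone: the $x$-primary invariants are pinned down by $\dim\ker(UV)^j=(m-k)+\dim\ker(VU)^{j-1}$ and the prime-to-$x$ part of the $K[x]$-module is isomorphic to that of $VU$ (both $U$ and $V$ restrict to isomorphisms on those summands). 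Your block construction of $U'$ and $V'$ — taking $U'_1$ of rank $k$ with $\im U'_1\supseteq\im Q$, $V'_1=LQ$ so $U'_1V'_1=Q$, then choosing $V'_2$ of rank $k$ and $U'_2=R(VU-V'_1U'_1)$ to force $V'U'=VU$ — is clean and checks out, and your use of $k<n$ (really only $k\leq n$) is exactly right. The trade-off: your argument is self-contained linear algebra over an arbitrary field, avoiding Proposition \ref{prop_tuplerank} entirely and being transparently characteristic-free; the paper's proof is shorter given that Proposition \ref{prop_tuplerank} is already proved and reused throughout the paper, so reusing it costs nothing there.
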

\begin{proof}
First note that $\rk(P,I_{2n})=2n-\dim\ker(P)=k$, since $0$ has the highest geometric multiplicity among all eigenvalues of $P$. Since $2(k+1)\leq 2n$, it follows by Proposition \ref{prop_tuplerank} that
$$
P\sim\begin{pmatrix}\bullet&\bullet\\R&\bullet\end{pmatrix}
$$
for some matrix $R\in\gl_n$ with $\rk(R)=k$. By conjugating the latter matrix with $\Diag(g,I_n)$ for some $g\in\GL_n$ such that $g\ker(R)\subseteq\ker(Q)$, we see that
$$
\begin{pmatrix}\bullet&\bullet\\R&\bullet\end{pmatrix}\sim\begin{pmatrix}\bullet&\bullet\\R'&\bullet\end{pmatrix}
$$
for some matrix $R'\in\gl_n$ with $\rk(R')=k$ and $\ker(R')\subseteq\ker(Q)$. This means that $Q=SR'$ for some $S\in\gl_n$. Since both $R'$ and any matrix similar to $P$ have rank $k$, we see that the matrix on the right must be of the form
$$
\begin{pmatrix}TR'&\bullet\\R'&\bullet\end{pmatrix}
$$
for some $T\in\gl_n$. Now note that the matrix
$$
\begin{pmatrix}I_n&S-T\\0&I_n\end{pmatrix}\begin{pmatrix}TR'&\bullet\\R'&\bullet\end{pmatrix}\begin{pmatrix}I_n&T-S\\0&I_n\end{pmatrix}=\begin{pmatrix}SR'&\bullet\\R'&\bullet\end{pmatrix}=\begin{pmatrix}Q&\bullet\\R'&\bullet\end{pmatrix}
$$
is similar to $P$ and of the form we want.
\end{proof}

\begin{prop}\label{prop_pointclosure}
Let $P\in\gl_\infty$ be an element. Then either the orbit of $P$ is dense in $\gl_\infty$ or $k=\rk(P-\lambda I_\infty)<\infty$ for some unique $\lambda\in K$. In the second case, the closure of the orbit of $P$ equals the irreducible closed subset $\{Q\in\gl_\infty\mid\rk(Q-\lambda I_\infty)\leq k\}$ of $\gl_{\infty}$.
\end{prop}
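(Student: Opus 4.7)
The plan is to split on whether $X := \overline{\SL_\infty \cdot P}$ equals $\gl_\infty$. If so, we are in the first alternative and there is nothing to prove. Otherwise $X$ is a proper $\SL_\infty$-stable closed subset, so for some $m$ the closure $X_m$ of its projection to $\gl_m$ satisfies $X_m \subsetneq \gl_m$, giving a nonzero element of $I(X_m)$. Lemma \ref{lm_boundedtrk} then yields $\rk(P, I_\infty) < \infty$. Unpacking the tuple-rank definition, $\rk(\mu_1 P + \mu_2 I_\infty) < \infty$ for some $(\mu_1 : \mu_2) \in \PP^1$; since $\rk(I_\infty) = \infty$, we must have $\mu_1 \neq 0$, and rescaling gives $\rk(P - \lambda I_\infty) < \infty$ for $\lambda = -\mu_2/\mu_1$. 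Uniqueness of $\lambda$ is Remark \ref{re_uniquehighgeomult}. Write $k = \rk(P - \lambda I_\infty)$.

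Setting $Y := \{Q \in \gl_\infty \mid \rk(Q - \lambda I_\infty) \leq k\}$, the goal is to prove $X = Y$. The containment $X \subseteq Y$ is immediate: $\lambda I_\infty$ is central in $\SL_\infty$, so conjugation preserves $\rk(\,\cdot\, - \lambda I_\infty)$, placing $\SL_\infty \cdot P$ inside $Y$; and $Y$ is closed as the intersection over $n$ of the preimages $\pr_n^{-1}\{Q_n \mid \rk(Q_n - \lambda I_n) \leq k\}$ of determinantal varieties.

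For the reverse $Y \subseteq X$, I will use that translation by the central element $\lambda I_\infty$ commutes with conjugation to reduce to $\lambda = 0$, so that $\rk(P) = k$. Given $Q \in Y$ and $n \in \NN$, it suffices to produce an element of $\SL_\infty \cdot P$ whose $\pr_n$-projection equals $\pr_n(Q)$, since $\overline{\SL_\infty \cdot P} = \bigcap_n \pr_n^{-1}\!\overline{\pr_n(\SL_\infty \cdot P)}$. Choose $N$ with $N \geq n$, $N > k$, and $\rk(\pr_{2N}(P)) = k$; this is achievable because $k = \sup_m \rk(\pr_m(P))$. Since $\rk(\pr_N(Q)) \leq k$, Lemma \ref{lm_randomtopleft} produces $A \in \GL_{2N}$ with $A \pr_{2N}(P) A^{-1}$ having $\pr_N(Q)$ as its top-left $N \times N$ block. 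The key trick is to lift $A$ to $\SL_\infty$ via the composition $\GL_{2N} \hookrightarrow \SL_{2N+1} \hookrightarrow \SL_\infty$, $A \mapsto \Diag(A, \det(A)^{-1}, I_\infty)$: this sidesteps the potential absence of $(2N)$th roots in $K$ and yields a single conjugator whose action on $P$ has the desired $\pr_n$-projection.

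Finally, irreducibility of $Y$ follows from Proposition \ref{prop_irr2irrinf} together with the well-known irreducibility of each $\pr_n$-projection of $Y$, which is a translate of a classical determinantal variety. The main obstacle is the containment $Y \subseteq X$: one needs to realize an arbitrary rank-$\leq k$ matrix as the top-left block of some conjugate of a truncation of $P$, and then lift the $\GL$-conjugator to an $\SL_\infty$-element compatibly across truncation levels; Lemma \ref{lm_randomtopleft} and the $\Diag(A, \det(A)^{-1}, I_\infty)$ embedding together handle both in a single stroke.
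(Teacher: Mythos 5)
Your proof is correct and follows the paper's route: reduce to $\lambda = 0$, pass to finite truncations, apply Lemma \ref{lm_randomtopleft}, and finish with Proposition \ref{prop_irr2irrinf} for irreducibility. Your explicit lift $A \mapsto \Diag(A, \det(A)^{-1}, I_\infty)$ into $\SL_\infty$ is in fact slightly more careful than the paper's own argument, which writes $\SL_{2n}$ at a step where one really needs $\SL_{2n+1}$ (or $\SL_\infty$) to avoid implicitly assuming that $(2n)$-th roots of $\det(A)$ exist in $K$.
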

\begin{proof}
Let $X$ be the closure of the orbit of $P$. Then either $X=\gl_\infty$ or $\rk(P,I_\infty)=k$ for some $k\in\ZZ_{\geq0}$ by Lemma \ref{lm_boundedtrk}. In the second case, we see that $\rk(P-\lambda I_\infty)=k$ for some unique $\lambda\in K$ by Remark \ref{re_uniquehighgeomult}. Our goal is to prove that $X=\{Q\in\gl_\infty\mid\rk(Q-\lambda I_\infty)\leq k\}$. Using the $\SL_\infty$-equivariant affine isomorphism
\begin{eqnarray*}
\gl_\infty&\rightarrow&\gl_\infty\\
Q&\mapsto&Q-\lambda I_\infty
\end{eqnarray*}
we may assume that $\lambda=0$ and hence that $k=\rk(P)$ is finite. It suffices to prove that
$$
\pr_n(\{Q\in\gl_\infty\mid\rk(Q)\leq k\})=\{Q\in\gl_n\mid\rk(Q)\leq k\}=\pr_n(\SL_\infty\cdot P)
$$ 
for all $n\gg0$ since the middle set is irreducible. See Proposition \ref{prop_irr2irrinf}. The inclusions
$$
\pr_n(\SL_\infty\cdot P)\subseteq\pr_n(\{Q\in\gl_\infty\mid\rk(Q)\leq k\})\subseteq\{Q\in\gl_n\mid\rk(Q)\leq k\}
$$ 
are clear for all $n\in\NN$. Let $n>k$ be an integer such that the rank of $\pr_{2n}(P)$ equals~$k$. Then
$$
\{Q\in\gl_n\mid\rk(Q)\leq k\}\subseteq\pr_n(\SL_{2n}\cdot\pr_{2n}(P))\subseteq \pr_n(\SL_\infty\cdot P)
$$
by Lemma \ref{lm_randomtopleft}. So indeed $\pr_n(\{Q\in\gl_\infty\mid\rk(Q)\leq k\})=\pr_n(\SL_\infty\cdot P)$ for all $n\gg0$. 
\end{proof}

\begin{lm}\label{lm_containedinunion}
Let $m$ be a positive integer and suppose that $I(X_m)$ contains a non-zero polynomial $f$ with $\deg(f)<m$. Let $g(t)=f(tI_m)\in K[t]$ be the restriction of $f$ to $\span(I_m)$. Then $X$ is contained in 
$$
\bigcup_{\lambda}\left\{Q\in\gl_\infty~\middle|~\rk(Q-\lambda I_\infty)<\deg(f)\right\}
$$
where $\lambda\in K$ ranges over the zeros of $g$.
\end{lm}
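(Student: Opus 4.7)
The plan is to show that for every $P\in X$, the tuple-rank bound supplied by Lemma \ref{lm_boundedtrk} singles out a specific scalar $\lambda\in K$ with $\rk(P-\lambda I_\infty)<\deg(f)$, and then to exploit the $\SL_\infty$-action to force the concrete point $\lambda I_m$ into $X_m$, thereby extracting the equation $g(\lambda)=0$. First, Lemma \ref{lm_boundedtrk} gives $r:=\rk(P,I_\infty)<\deg(f)$ for every $P\in X$, so by Proposition \ref{prop_tuplerank=suptuplerank} we have $\rk(\pr_n(P),I_n)\leq r$ at every level $n$.

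To extract $\lambda$: for $n>2r$ the finite analogue of the argument in Remark \ref{re_uniquehighgeomult} (a matrix has at most one eigenvalue of geometric multiplicity exceeding $n/2$) yields a unique $\lambda_n\in K$ with $\rk(\pr_n(P)-\lambda_n I_n)\leq r$. Since $\pr_{n_0}(P)-\lambda_n I_{n_0}$ is a submatrix of $\pr_n(P)-\lambda_n I_n$ and hence also has rank $\leq r$, uniqueness at the smaller level $n_0\gg 0$ forces $\lambda_n=\lambda_{n_0}$, so $(\lambda_n)$ stabilises at some $\lambda\in K$ with $\rk(P-\lambda I_\infty)=\sup_n\rk(\pr_n(P)-\lambda I_n)\leq r$. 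To verify $g(\lambda)=0$, fix $n\geq m+r$ so that the $\lambda$-eigenspace of $\pr_n(P)$ has dimension at least $n-r\geq m$, pick linearly independent $\lambda$-eigenvectors $v_1,\dots,v_m$, extend them to a basis of $K^n$, and form $C=[v_1|\cdots|v_n]\in\GL_n$; rescaling $v_n$ by $\det(C)^{-1}$ puts $C$ in $\SL_n$ while keeping $v_1,\dots,v_m$ untouched. Then $C^{-1}\pr_n(P)C$ has $\lambda I_m$ as its top-left $m\times m$ block. Lifting $C$ to $\tilde C=\Diag(C,I_\infty)\in\SL_\infty$ and computing in block form yields $\pr_n(\tilde C^{-1}P\tilde C)=C^{-1}\pr_n(P)C$; since $X$ is $\SL_\infty$-stable, the conjugate $\tilde C^{-1}P\tilde C$ lies in $X$, so $\lambda I_m=\pr_m(\tilde C^{-1}P\tilde C)\in X_m$, and evaluating $f\in I(X_m)$ at $\lambda I_m$ gives $g(\lambda)=0$.

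The main obstacle is conceptual rather than technical: Lemma \ref{lm_boundedtrk} extracts only off-diagonal rank information from $f$ and says nothing about the specific scalar by which $P$ differs from a low-rank matrix, whereas the conclusion pins down $\lambda$ as a root of $g=f|_{\span(I_m)}$. The bridge is the conjugation trick above: because the $\lambda$-eigenspace of $\pr_n(P)$ is vastly larger than $m$, there is ample room inside $\SL_n$ to rotate $m$ eigenvectors into $e_1,\dots,e_m$, which exhibits $\lambda I_m$ as an honest point of $X_m$ on which $f$ must vanish.
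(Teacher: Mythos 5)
Your proof is correct, and it departs from the paper in one interesting way. Both arguments begin by invoking Lemma \ref{lm_boundedtrk} to get $\rk(P,I_\infty)<\deg(f)$, pin down the unique $\lambda$ with $\rk(P-\lambda I_\infty)<\deg(f)$, and conclude by observing that $\lambda I_m\in X_m$ forces $g(\lambda)=0$. The divergence is in how $\lambda I_m\in X_m$ is obtained. The paper invokes Proposition \ref{prop_pointclosure}, which asserts that the full orbit closure of $P$ is $\{Q\mid\rk(Q-\lambda I_\infty)\leq k\}$; since this set contains $\lambda I_\infty$, one gets $\lambda I_\infty\in X$ and hence $\lambda I_m\in X_m$. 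That proposition is itself nontrivial (it relies on Lemma \ref{lm_randomtopleft}, whose proof requires finding similar matrices with an \emph{arbitrary} prescribed top-left block of bounded rank). You avoid that machinery entirely: you only need a conjugate whose top-left $m\times m$ block is the very special matrix $\lambda I_m$, and since the $\lambda$-eigenspace of $\pr_n(P)$ has codimension at most $r<m$, you can rotate $m$ eigenvectors into $e_1,\dots,e_m$ by a matrix in $\SL_n$ and lift it to $\SL_\infty$. So your argument is more elementary and self-contained for this lemma, at the cost of not producing the sharper information (the full orbit closure) that the paper reuses in Proposition \ref{prop_closedsubsetsnotcontaining}. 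One small point worth tidying: you should take $n$ strictly larger than $m$ so that the column you rescale by $\det(C)^{-1}$ is not among $v_1,\dots,v_m$; since $r\leq\deg(f)-1\leq m-2$, any $n\geq m+r$ with $n>m$ (e.g.\ $n=m+r+1$) works, and the degenerate case $r=0$, where $P=\lambda I_\infty$, is immediate anyway.
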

\begin{proof}
Let $P$ be an element of $X$. Since $f$ is non-zero, we know that $X$ is a proper $\SL_\infty$-stable closed subset of $\gl_\infty$. Hence the orbit of $P$ cannot be dense in $\gl_\infty$. So $k=\rk(P-\lambda I_\infty)<\deg(f)$ for some $\lambda\in K$ by Lemma \ref{lm_boundedtrk}. This $\lambda$ is unique and the closure of the orbit of $P$ equals $\{Q\in\gl_\infty\mid\rk(Q-\lambda I_\infty)\leq k\}$ by Proposition~\ref{prop_pointclosure}. So we see that $\lambda I_\infty$ is an element of $X$. So $\lambda I_m$ is an element of $X_m$ and hence $g(\lambda)=f(\lambda I_m)=0$. We see that for all $P\in X$ there is a $\lambda\in K$ with $g(y)=0$ such that\medskip

\hfill$\displaystyle P\in \left\{Q\in\gl_\infty~\middle|~\rk(Q-\lambda I_\infty)<\deg(f)\right\}.$\smallskip
\end{proof}

\begin{prop}\label{prop_closedsubsetsnotcontaining}
Either the $\SL_\infty$-stable closed subset $\span(I_\infty)$ of $\gl_\infty$ is contained in $X$ or there exist $\lambda_1,\dots,\lambda_\ell\in K$ and $k_1,\dots,k_\ell\in\ZZ_{\geq0}$ such that 
$$
X=\bigcup_{i=1}^\ell\{Q\in\gl_\infty\mid\rk(Q-\lambda_i I_\infty)\leq k_i\}.
$$
\end{prop}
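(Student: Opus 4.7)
The plan is to use the hypothesis $\span(I_\infty)\not\subseteq X$ to manufacture a polynomial in some $I(X_m)$ that does not vanish on a scalar matrix, so that Lemma \ref{lm_containedinunion} pins $X$ down to a union indexed by the finitely many roots of a one-variable polynomial, and then to use Proposition \ref{prop_pointclosure} to identify each piece as a maximal orbit closure.

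For the first step I would pick $\mu\in K$ with $\mu I_\infty\notin X$. Because $X$ is the inverse limit of the $X_m$, this yields some $m\in\NN$ and an $f\in I(X_m)$ with $f(\mu I_m)\neq 0$. To meet the degree hypothesis of Lemma \ref{lm_containedinunion}, I would then replace $m$ by any $n>\deg(f)$ and $f$ by its pullback $\pr_m^*(f)\in I(X_n)$; this preserves the degree and still satisfies $\pr_m^*(f)(\mu I_n)=f(\mu I_m)\neq 0$. The polynomial $g(t):=f(tI_m)$ is therefore nonzero with only finitely many roots $\lambda_1,\dots,\lambda_\ell\in K$, and Lemma \ref{lm_containedinunion} delivers
\[
X\subseteq\bigcup_{i=1}^\ell\{Q\in\gl_\infty\mid\rk(Q-\lambda_i I_\infty)<\deg(f)\}.
\]

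For the second step, for each $i$ I would let $k_i$ be the maximum of $\rk(P-\lambda_i I_\infty)$ over those $P\in X$ satisfying $\rk(P-\lambda_i I_\infty)<\deg(f)$, discarding any index for which no such $P$ exists. This maximum is taken over a nonempty finite subset of $\{0,1,\dots,\deg(f)-1\}$ and so is attained by some $P_i\in X$. By Proposition \ref{prop_pointclosure}, the orbit closure of $P_i$ equals $\{Q\in\gl_\infty\mid\rk(Q-\lambda_i I_\infty)\leq k_i\}$, and closedness together with $\SL_\infty$-stability of $X$ force this orbit closure to lie in $X$. Conversely, by the inclusion above, any $P\in X$ belongs to $\{Q\mid\rk(Q-\lambda_i I_\infty)<\deg(f)\}$ for some $i$, whence $\rk(P-\lambda_i I_\infty)\leq k_i$ by the definition of $k_i$. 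Combining the two inclusions yields the advertised decomposition of $X$.

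I do not anticipate a serious obstacle here: the proposition is essentially a bookkeeping assembly of previously established facts. The mildly delicate point is the initial pullback to force $\deg(f)<m$ while retaining the non-vanishing on the diagonal line, which is what simultaneously unlocks Lemma \ref{lm_containedinunion} and ensures that the resulting polynomial $g$ is nonzero so that the index set $\{\lambda_1,\dots,\lambda_\ell\}$ is finite.
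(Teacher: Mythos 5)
Your proposal is correct and follows essentially the same route as the paper: produce a polynomial in some $I(X_m)$ whose restriction to $\span(I_m)$ is nonzero, invoke Lemma~\ref{lm_containedinunion} to cover $X$ by finitely many bounded-rank sets, and then use Proposition~\ref{prop_pointclosure} together with $\SL_\infty$-stability and closedness of $X$ to upgrade the cover to an equality. You are in fact slightly more careful than the paper in explicitly pulling $f$ back to enforce the hypothesis $\deg(f)<m$ of Lemma~\ref{lm_containedinunion}.
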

\begin{proof}
Assume that $\span(I_\infty)$ is not contained in $X$. Then, for some $m\in\NN$, $X_m$ is a proper subset of $\gl_m$ that does not contain $\span(I_m)$. The ideal $I(X_m)$ must contain a non-zero polynomial $f$ such that the polynomial $g(t)=f(tI_m)\in K[t]$ is non-zero. By Lemma \ref{lm_containedinunion}, we see that $X$ is contained in 
$$
\bigcup_{\lambda}\left\{Q\in\gl_\infty~\middle|~\rk(Q-\lambda I_\infty)<\deg(f)\right\}
$$
where $\lambda\in K$ ranges over the finitely many zeros of $g$. Take
$$
\Lambda=\left\{\lambda\in K~\middle|~g(\lambda)=0,\exists P\in X\colon \rk(P-\lambda I_\infty)<\deg(f)\right\}
$$ 
and take
$$
k_\lambda=\max\{\rk(P-\lambda I_\infty)\mid P\in X,\rk(P-\lambda I_\infty)<\infty\}
$$
for all $\lambda\in\Lambda$. Then we see that
$$
X=\bigcup_{\lambda\in\Lambda}\{Q\in\gl_\infty\mid\rk(Q-\lambda I_\infty)\leq k_\lambda\}
$$
using Proposition \ref{prop_pointclosure}.
\end{proof}

The proposition implies in particular that any descending chain of $\SL_\infty$-stable closed subsets of $\gl_\infty$ stablizes as long as one of these subsets does not contain $\span(I_\infty)$. Next we will classify the subsets that do contain $\span(I_\infty)$.  

\begin{prop}\label{prop_rkIleqk}
Let $k$ be a non-negative integer. Then the $\SL_\infty$-stable subset 
$$
\{P\in\gl_\infty\mid\rk(P,I_\infty)\leq k\}
$$
of $\gl_\infty$ is closed and irreducible.
\end{prop}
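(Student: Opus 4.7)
The plan is to invoke Proposition~\ref{prop_irr2irrinf} after realizing $X_k := \{P\in\gl_\infty\mid\rk(P,I_\infty)\leq k\}$ as an inverse limit of closed irreducible subsets. By Proposition~\ref{prop_tuplerank=suptuplerank}, the condition $\rk(P,I_\infty)\leq k$ is equivalent to $\rk(\pr_n(P),I_n)\leq k$ for every $n\in\NN$. Setting $Y_n := \{Q\in\gl_n\mid\rk(Q,I_n)\leq k\}$, we therefore have $X_k = \varprojlim_n Y_n$, so the closedness of $X_k$ in $\gl_\infty$ reduces to the closedness of each $Y_n$ in $\gl_n$.

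For each $n$, the subset
\[
\widetilde{Y}_n := \{(Q,[\mu_1:\mu_2])\in\gl_n\times\PP^1 \mid \rk(\mu_1 Q+\mu_2 I_n)\leq k\}
\]
is cut out by the vanishing of the $(k+1)\times(k+1)$ minors of $\mu_1 Q+\mu_2 I_n$, hence closed. Because $\PP^1$ is complete, the projection $\gl_n\times\PP^1\to\gl_n$ is a closed map, and the image of $\widetilde{Y}_n$ under it is precisely $Y_n$. Thus $Y_n$ is closed, and consequently so is $X_k$.

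For irreducibility, I first compute $\pr_n(X_k)$. Given $Q\in Y_n$ with $n>k$, the case $\mu_1=0$ is excluded (since $\rk(I_n)=n>k$), so there exists $\lambda\in K$ with $\rk(Q-\lambda I_n)\leq k$. Defining $P\in\gl_\infty$ by $P_{ij}=Q_{ij}$ for $i,j\leq n$ and $P_{ij}=\lambda\delta_{ij}$ otherwise, we have $\pr_n(P)=Q$ and $P-\lambda I_\infty$ is supported only on the top-left $n\times n$ block, where it equals $Q-\lambda I_n$. Hence $\rk(P,I_\infty)\leq\rk(P-\lambda I_\infty)\leq k$, so $P\in X_k$. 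This shows $\pr_n(X_k)=Y_n$ for every $n>k$.

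It then remains to verify that $Y_n$ is irreducible for $n>k$. Consider the morphism
\[
\phi\colon K\times R_k \to \gl_n,\quad (\lambda,M)\mapsto M+\lambda I_n,
\]
where $R_k := \{M\in\gl_n\mid\rk(M)\leq k\}$ is the classical irreducible determinantal variety. The previous paragraph shows that $\im(\phi)=Y_n$. Since $K\times R_k$ is irreducible and $Y_n$ is closed, $Y_n$ is the closure of the image of an irreducible variety, and therefore irreducible. Proposition~\ref{prop_irr2irrinf} then yields the irreducibility of $X_k$. The most technical step is the closedness of $Y_n$ in $\gl_n$, which is the elimination-theoretic step handled by the completeness of $\PP^1$; everything else is bookkeeping around the passage between finite $n$ and the inverse limit.
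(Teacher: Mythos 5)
Your proof is correct and follows essentially the same approach as the paper's: identify the set as the inverse limit of $Y_n=\{Q\in\gl_n\mid\rk(Q,I_n)\leq k\}$ via Proposition~\ref{prop_tuplerank=suptuplerank}, prove each $Y_n$ is closed by eliminating a $\PP^1$-factor (the paper routes this through $\gl_n^2$, you project directly, but it's the same completeness argument), show $Y_n$ is irreducible as the image of $K\times\{M\mid\rk(M)\leq k\}$, and conclude with Proposition~\ref{prop_irr2irrinf}. Your explicit verification that $\pr_n(X_k)=Y_n$ for $n>k$ is a useful small addition that the paper leaves implicit.
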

\begin{proof}
Using Proposition \ref{prop_tuplerank=suptuplerank}, we see that 
$$
\{P\in\gl_\infty\mid\rk(P,I_\infty)\leq k\}
$$
its the inverse limit of its projections $\{P\in\gl_n\mid\rk(P,I_n)\leq k\}$ onto $\gl_n$. So it suffices to show that this is a closed irreducible subset of $\gl_n$ for all $n\in\NN$. See Proposition~\ref{prop_irr2irrinf}.
The subset $\{P\in\gl_n\mid\rk(P,I_n)\leq k\}$ is the inverse image of the subset
$$
Y=\left\{(P,Q)\in\gl_n^2~\middle|~\rk(P,Q)\leq k\right\}
$$
under the map $\gl_n\rightarrow\gl_n^2,P\mapsto(P,I_n)$. The subset $Y$ is closed in $\gl_n^2$ since it is the image of the closed subset
$$
\left\{((\mu_1:\mu_2),P,Q)\in\PP^1\times\gl_n^2~\middle|~\rk(\mu_1P+\mu_2Q)\leq k\right\}
$$
under the projection map along the complete variety $\PP^1$. So $\{P\in\gl_n\mid\rk(P,I_n)\leq k\}$ is a closed subset of $\gl_n$. This subset is also the image of the map 
\begin{eqnarray*}
\{Q\in\gl_n\mid \rk(Q)\leq k\}\times K&\to&\gl_n\\
(Q,\lambda)&\mapsto&Q+\lambda I_n
\end{eqnarray*}
and hence irreducible.
\end{proof}

\begin{prop}\label{prop_closedsubsetcontaining}
Suppose that $X$ contains $\span(I_\infty)$. Then 
$$
X=\{P\in\gl_\infty\mid\rk(P,I_\infty)\leq k\}\cup Y
$$
for some non-negative integer $k$ and some $\SL_\infty$-stable closed subset $Y$ of $\gl_\infty$ that does not contain $\span(I_\infty)$.
\end{prop}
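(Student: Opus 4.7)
The plan is to combine the orbit-closure description from Proposition~\ref{prop_pointclosure} with the uniform tuple-rank bound from Lemma~\ref{lm_boundedtrk}, and to exploit that $K$ is infinite, so that every proper Zariski-closed subset of $\mathbb{A}^1_K$ is finite. I expect the main obstacle to be showing that, above the critical value of $k$, only finitely many scalars $\lambda$ can appear as ``shift parameters'' in $X$.

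First, since $X$ is proper, Lemma~\ref{lm_boundedtrk} provides an integer $d$ with $\rk(P,I_\infty)<d$ for every $P\in X$. Let $k$ be the largest non-negative integer such that $\{P\in\gl_\infty\mid\rk(P,I_\infty)\leq k\}\subseteq X$. The hypothesis $\span(I_\infty)\subseteq X$ guarantees that this set of admissible integers contains $0$, and the bound forces $k\leq d-1$, so $k$ is well defined.

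For each $k'>k$ and $\lambda\in K$, set $S(\lambda,k')=\{Q\in\gl_\infty\mid\rk(Q-\lambda I_\infty)\leq k'\}$ and $A_{k'}=\{\lambda\in K\mid S(\lambda,k')\subseteq X\}$. The key step is to prove that $A_{k+1}$ is finite. Writing $A_{k+1}=\bigcap_{Q}T_Q$, with $Q$ ranging over matrices of rank $\leq k+1$ and $T_Q=\{\lambda\in K\mid Q+\lambda I_\infty\in X\}$, I would observe that each $T_Q$ is the preimage of the closed set $X$ under the morphism $\mathbb{A}^1\to\gl_\infty$, $\lambda\mapsto Q+\lambda I_\infty$, and is therefore Zariski-closed in $\mathbb{A}^1_K$. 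By maximality of $k$, the set $\{P\mid\rk(P,I_\infty)\leq k+1\}=\bigcup_\lambda S(\lambda,k+1)$ is not contained in $X$, so $A_{k+1}\neq K$; hence some $T_{Q_0}\neq K$, and being a proper closed subset of the affine line over an infinite field, $T_{Q_0}$ is finite. Therefore $A_{k+1}\subseteq T_{Q_0}$ is finite, and the inclusion $A_{k'}\subseteq A_{k+1}$ (coming from $S(\lambda,k')\subseteq S(\lambda,k'+1)$) shows $A_{k'}$ is finite for every $k'\in\{k+1,\dots,d-1\}$.

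Finally, set $Y=\bigcup_{k'=k+1}^{d-1}\bigcup_{\lambda\in A_{k'}}S(\lambda,k')$; this is a finite union of $\SL_\infty$-stable closed subsets, hence itself closed and $\SL_\infty$-stable. By construction $\{P\mid\rk(P,I_\infty)\leq k\}\cup Y\subseteq X$. Conversely, given $P\in X$ with $k_P:=\rk(P,I_\infty)>k$, Proposition~\ref{prop_pointclosure} yields a unique $\lambda_P$ with $\overline{\SL_\infty\cdot P}=S(\lambda_P,k_P)\subseteq X$, whence $\lambda_P\in A_{k_P}$ and $P\in Y$. Each $S(\lambda,k')$ contains exactly one scalar matrix, namely $\lambda I_\infty$, since $\mu I_\infty\in S(\lambda,k')$ would force $\rk((\mu-\lambda)I_\infty)\leq k'<\infty$ and hence $\mu=\lambda$; consequently $Y$ contains only finitely many scalars, and $\span(I_\infty)\not\subseteq Y$ because $K$ is infinite.
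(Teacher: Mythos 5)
Your proof is correct, and it uses the same essential ingredients as the paper's argument: Lemma~\ref{lm_boundedtrk} for the uniform bound $d$, Proposition~\ref{prop_pointclosure} to reduce to orbit closures, and the observation that, for a fixed $Q$, the set $T_Q=\{\lambda\in K\mid Q+\lambda I_\infty\in X\}$ is closed in $\mathbb{A}^1$ and hence finite once proper. What differs is the organization. The paper proceeds by induction on $\ell-k$ (where $\ell$ bounds $\rk(P,I_\infty)$ on $X$): it invokes the inductive hypothesis on $X\cap\{P\mid\rk(P,I_\infty)\leq\ell-1\}$ to get $Y'$, shows the set $Z$ of scalars realizing rank exactly $\ell$ in $X$ is finite, and sets $Y=Y'\cup\bigcup_{\lambda\in Z}S(\lambda,\ell)$. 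You instead build $Y$ directly in one step across all levels $k'\in\{k+1,\dots,d-1\}$, getting finiteness of every $A_{k'}$ from a single finiteness argument at level $k+1$ combined with the nesting $A_{k'}\subseteq A_{k+1}$. Your non-inductive version is a bit cleaner and makes it explicit why only finitely many shift scalars can occur; the paper's inductive phrasing is a natural stepping stone toward the order-isomorphism with the poset $S$ used in the proof of Theorem~\ref{thm_maingl}, where those intermediate decompositions reappear. One small slip to fix: the parenthetical justification of $A_{k'}\subseteq A_{k+1}$ should read $S(\lambda,k+1)\subseteq S(\lambda,k')$, not $S(\lambda,k')\subseteq S(\lambda,k'+1)$.
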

\begin{proof}
Since $X$ is a proper subset of $\gl_\infty$, we know that 
$$
X\subseteq \{P\in\gl_\infty\mid\rk(P,I_\infty)\leq \ell\}
$$
for some $\ell\in\ZZ_{\geq0}$ by Lemma \ref{lm_boundedtrk}. Let $k$ be the maximal non-negative integer such that
$$
\{P\in\gl_\infty\mid\rk(P,I_\infty)\leq k\}\subseteq X.
$$
We will prove the statement by induction on the difference between $\ell$ and $k$.\bigskip

Suppose that $\ell=k$. Then $X=\{P\in\gl_\infty\mid\rk(P,I_\infty)\leq k\}$ and the statement holds. Now suppose that $\ell>k$ and let $Y'$ be an $\SL_\infty$-stable closed subset of $\gl_\infty$ that does not contain $\span(I_\infty)$ such that
$$
X\cap\{P\in\gl_\infty\mid\rk(P,I_\infty)\leq \ell-1\}=\{P\in\gl_\infty\mid\rk(P,I_\infty)\leq k\}\cup Y'.
$$
Consider the set $Z=\{\lambda\in K\mid\exists P\in X\colon\rk(P-\lambda I_\infty)=\ell\}$ and fix an element $Q\in\gl_\infty$ with $\rk(Q)=\ell$. By Proposition \ref{prop_pointclosure}, we know for $\lambda\in K$ that $Q+\lambda I_\infty\in X$ if and only if $\lambda\in Z$. This shows that $Z$ is a closed subset of $K$. So either $Z=K$ or $Z$ is finite. If $Z=K$, then we see that $X$ contains all $P\in\gl_\infty$ with $\rk(P,I_\infty)\leq\ell$ by Proposition \ref{prop_pointclosure}. Since $\ell>k$, this is not true and hence $Z$ is finite. Take
$$
Y=Y'\cup \bigcup_{\lambda\in Z}\{P\in\gl_\infty\mid\rk(P-\lambda I_\infty)\leq \ell\}.
$$
Then we see that $X=\{P\in\gl_\infty\mid\rk(P,I_\infty)\leq k\}\cup Y$.
\end{proof}

\begin{proof}[Proof of Theorem \ref{thm_maingl}]
Let $S$ be the set pairs $(k,f)$ where $k\in\ZZ_{\geq-1}$ and where $f\colon K\rightarrow\ZZ_{\geq k}$ is a function such that $f^{-1}(\ZZ_{>k})$ is finite.  Define a partial ordering on $S$ by $(k,f)\leq (\ell,g)$ when $k\leq \ell$ and $f(\lambda)\leq g(\lambda)$ for all $\lambda\in K$. Then for all $(k,f)\in S$, the set $\{(k,g)\in S\mid (k,g)\leq (k,f)\}$ is finite. So any descending chain in $S$ stabilizes. For a proper $\SL_\infty$-stable closed subset $X$ of $\gl_\infty$, let $k_X$ be the maximal integer such that $\{P\in\gl_\infty\mid\rk(P,I_\infty)\leq k_X\}\subseteq X$ and let $f_X\colon K\rightarrow\ZZ_{\geq k}$ be the function sending $\lambda\in K$ to the maximal $k$ such that $\{P\in\gl_\infty\mid\rk(P-\lambda I_\infty)\leq k\}\subseteq X$. Then, by Propositions \ref{prop_closedsubsetsnotcontaining} and \ref{prop_closedsubsetcontaining}, we see that
$$
X=\{P\in\gl_\infty\mid\rk(P,I_\infty)\leq k_X\}\cup\bigcup_{\lambda\in f_X^{-1}(\ZZ_{>k_X})}\left\{P\in\gl_\infty~\middle|~\rk(P-\lambda I_\infty)\leq f_X(\lambda)\right\}
$$
and that the map $X\mapsto (k_X,f_X)$ is an order preserving bijection between the set of proper $\SL_\infty$-stable closed subsets of $\gl_\infty$ and $S$. Now consider a descending chain 
$$
X_1\supseteq X_2\supseteq X_3\supseteq X_4\supseteq\dots
$$
of $\SL_\infty$-stable closed subsets of $\gl_\infty$. We get a descending chain 
$$
(k_{X_1},f_{X_1})\geq (k_{X_2},f_{X_2})\geq (k_{X_3},f_{X_3})\geq (k_{X_4},f_{X_4})\geq\dots
$$
in $S$ which must stabilize. Therefore the original chain also stabilizes. Hence $\gl_\infty$ is $\SL_\infty$-Noetherian. The irreducible $\SL_\infty$-stable closed subsets of $\gl_\infty$ are as described in the theorem by Propositions \ref{prop_pointclosure}, \ref{prop_closedsubsetsnotcontaining}, \ref{prop_rkIleqk} and \ref{prop_closedsubsetcontaining}.
\end{proof}

\begin{re}
The techniques used in the section can also be used to generalize Theorem 1.5 from \cite{draisma-eggermont} to $G$-Noetherianity where $G=\{(g,g)\mid g\in\GL_\infty\}$. This generalization also follows from Theorem 1.2 of \cite{eggermont-snowden}.
\end{re}

\subsection{The proof of the other cases}

Now, we turn our attention to cases (2)-(4b) of Theorem \ref{thm_mainsl}. We start by proving some statements that are useful in multiple cases.

\begin{lm}\label{lm_topleftinv}
Let $k,n$ be positive integers with $k\leq n$ and let $P\in\gl_n$ be a matrix. Then $\rk(P)<k$ if and only if $\det(Q_{[k],[k]})=0$ for all $Q\sim P$.
\end{lm}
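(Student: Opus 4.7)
The plan: the forward direction is immediate, since any $Q\sim P$ has $\rk(Q)=\rk(P)<k$, so $Q_{[k],[k]}$ has rank less than $k$ and hence vanishing determinant. For the converse I would argue the contrapositive: under the hypothesis $\rk(P)\geq k$, I will construct some $Q\sim P$ with invertible top-left $k\times k$ block.

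Viewing $P$ as an endomorphism of $V:=K^n$, the data of a $Q\sim P$ with invertible $Q_{[k],[k]}$ is equivalent to a direct-sum decomposition $V=W\oplus W'$ with $\dim W=k$ such that the composition
\[
W\hookrightarrow V\xrightarrow{P}V\twoheadrightarrow V/W'
\]
is an isomorphism: indeed, if $v_1,\ldots,v_k$ is a basis of $W$ and $v_{k+1},\ldots,v_n$ is a basis of $W'$, then with $A$ the matrix of columns $v_1,\ldots,v_n$, the matrix $Q=A^{-1}PA$ is similar to $P$ and $Q_{[k],[k]}$ is the matrix of this composition in the basis $v_1,\ldots,v_k$. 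The composition is an isomorphism between $k$-dimensional spaces exactly when $P|_W$ is injective and $PW\cap W'=0$.

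Finding $W$ is easy: since $\im P$ has dimension at least $k$, I can take $W$ to be the span of $k$ standard basis vectors corresponding to $k$ linearly independent columns of $P$, so that $P|_W$ is injective. The main obstacle is producing $W'$ of dimension $n-k$ that is simultaneously complementary to $W$ and to $PW$. Assuming $W=\span(e_1,\ldots,e_k)$ (the case of general $W$ reduces to this by a permutation similarity), write $P=\begin{pmatrix}P_{11}&P_{12}\\P_{21}&P_{22}\end{pmatrix}$ with $\begin{pmatrix}P_{11}\\P_{21}\end{pmatrix}$ of rank $k$. Parameterising complements of $W$ as $W'_S=\{u+Su:u\in\span(e_{k+1},\ldots,e_n)\}$ for $k\times(n-k)$ matrices $S$, a short calculation gives $W'_S\cap PW=0$ iff $P_{11}-SP_{21}$ is invertible. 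The rank-$k$ condition on $\begin{pmatrix}P_{11}\\P_{21}\end{pmatrix}$ makes $P_{11}$ injective on $\ker P_{21}$, and since $S\circ(P_{21}|_{W_0})$ can be made an arbitrary linear map $W_0\to K^k$ on any complement $W_0$ of $\ker P_{21}$ in $K^k$, one verifies that $\det(P_{11}-SP_{21})$ is a non-zero polynomial in the entries of $S$; the infinitude of $K$ then yields a valid $S$, completing the construction.
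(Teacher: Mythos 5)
Your proof is correct, but it takes a genuinely different route from the paper's. You argue the contrapositive geometrically: reduce the problem to finding a direct-sum decomposition $V=W\oplus W'$ with $\dim W=k$, $P|_W$ injective, and $PW\cap W'=0$; pick $W$ spanned by $k$ standard basis vectors corresponding to independent columns; then parametrize complements $W'_S$ and show $\det(P_{11}-SP_{21})$ is a non-vanishing polynomial in $S$. The paper instead proves directly that the hypothesis forces $\det(P_{\mathscr{K},\mathscr{L}})=0$ for \emph{all} $k$-element subsets $\mathscr{K},\mathscr{L}\subseteq[n]$, by reverse induction on $|\mathscr{K}\cap\mathscr{L}|$: the base case $\mathscr{K}=\mathscr{L}$ comes from conjugating by a permutation matrix, and the inductive step swaps one index of $\mathscr{K}$ via the similarity ``add row $i$ to row $j$, subtract column $j$ from column $i$,'' which by multilinearity expresses $\det(P_{\mathscr{K},\mathscr{L}})$ in terms of two minors with strictly larger row-column overlap. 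The paper's argument is shorter and valid over any field, whereas you invoke the infinitude of $K$ at the end — though that invocation is actually unnecessary: your verification that the polynomial is non-zero already exhibits an explicit $S\in K^{k\times(n-k)}$ making $P_{11}-SP_{21}$ invertible (extend any isomorphism from $W_0$ onto a complement of $P_{11}(\ker P_{21})$ through a left inverse of $P_{21}|_{W_0}$), so you produce the desired $Q\sim P$ directly. Your construction is the more conceptual of the two and is in the same geometric spirit as the Grassmannian-incidence argument the paper uses for Proposition~\ref{prop_tuplerank}, whereas the paper keeps this particular lemma purely combinatorial.
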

\begin{proof}
If $\rk(P)<k$, then $\det(Q_{[k],[k]})=0$ for all $Q\sim P$. Suppose that $\det(Q_{[k],[k]})=0$ for all $Q\sim P$.
Note that $\rk(P)<k$ if and only if $\det(P_{\mathscr{K},\mathscr{L}})=0$ for all subsets $\mathscr{K},\mathscr{L}\subset[n]$ of size $k$. One can prove this using reverse induction of the size of $\mathscr{K}\cap\mathscr{L}$. If $\mathscr{K}=\mathscr{L}$, then $P_{\mathscr{K},\mathscr{L}}=Q_{[k],[k]}$ for some matrix $Q\sim P$ obtained from $P$ by conjugating with a permutation matrix. So $\det(P_{\mathscr{K},\mathscr{L}})=0$. For $|\mathscr{K}\cap\mathscr{L}|<k$, we take $i\in\mathscr{K}\setminus\mathscr{L}$, $j\in\mathscr{L}\setminus\mathscr{K}$ and $\mathscr{K}'=\{j\}\cup\mathscr{K}\setminus\{i\}$ and note that, since $|\mathscr{K}'\cap\mathscr{L}|>|\mathscr{K}\cap\mathscr{L}|$,
$$
\det(P_{\mathscr{K},\mathscr{L}})=\pm\det(P_{\mathscr{K}',\mathscr{L}})\pm\det(Q_{\mathscr{K}',\mathscr{L}})=0
$$
where $Q\sim P$ is the matrix obtained from $P$ by adding row $i$ to row $j$ and substracting column $j$ from column $i$.
\end{proof}

\begin{lm}\label{lm_higherrank}
Let $k,\ell,n\in\NN$ be integers with $n\geq 6k$ and $\ell\geq2$ and let $P_1,\dots,P_\ell\in\gl_n$ be matrices of rank $k$. Then there exist $Q_1\sim P_1,\dots,Q_\ell\sim P_\ell$ such that
$$
k<\rk(Q_1+\dots+Q_\ell,I_n)=\rk(Q_1+\dots+Q_\ell)\leq 3k.
$$
\end{lm}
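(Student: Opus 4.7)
My strategy is to shrink the ``active'' support of each $P_i$ to a $2k$-block, then to place $Q_1,Q_3,\dots,Q_\ell$ inside one common $2k$-block and $Q_2$ inside a disjoint $2k$-block, so that $\sum_i Q_i$ becomes block-diagonal with controllable rank and a kernel large enough for the $(\cdot,I_n)$-condition to be automatic.

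First I would carry out a Jordan-type reduction. Since $\dim\ker P_i=n-k$, the matrix $P_i$ has $n-k$ Jordan blocks at $0$, and at most $k$ of these can have size $\geq 2$ (each contributes at least $1$ to $\rk P_i=k$). Hence $P_i$ has at least $n-2k$ size-$1$ zero Jordan blocks, so $P_i\sim\Diag(P_i'',0_{n-2k})$ for some rank-$k$ matrix $P_i''\in\gl_{2k}$. This decomposition is valid over any field: pick an $(n-2k)$-dimensional $W\subseteq\ker P_i$ disjoint from $\im P_i$, and take any $P_i$-stable complement $V\supseteq\im P_i$ to $W$.

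Next, for $i\in\{1,3,4,\dots,\ell\}$ I would set $Q_i=\Diag(Q_i'',0_{n-2k})$ with $Q_i''\sim P_i''$ in $\gl_{2k}$, placed in rows and columns $1,\dots,2k$; and take $Q_2$ to be zero everywhere except in rows and columns $2k+1,\dots,4k$, where it equals some $Q_2''\sim P_2''$. By the reduction, $Q_i\sim P_i$ in $\gl_n$. Writing $T:=Q_1''+Q_3''+\dots+Q_\ell''$, one obtains $\sum_iQ_i=\Diag(T,Q_2'',0_{n-4k})$, so $\rk(\sum_i Q_i)=\rk T+k$. The remaining freedom is to choose the $Q_i''$ inside their $\gl_{2k}$-orbits, and one can arrange $T\neq 0$: for any fixed $Q_3'',\dots,Q_\ell''$ the equation $T=0$ forces a single value of $Q_1''$, but the conjugation orbit of $P_1''$ in $\gl_{2k}$ contains more than one element (since $P_1''$ has rank $k$ with $1\leq k<2k$ and is therefore non-scalar). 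Any such choice yields $1\leq\rk T\leq 2k$, hence $k<\rk(\sum_iQ_i)\leq 3k$.

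Finally, I would verify $\rk(\sum_iQ_i,I_n)=\rk(\sum_iQ_i)$. For any matrix $M$ and any $\lambda\neq 0$, the equality $\ker(M-\lambda I)\cap\ker M=0$ gives $\dim\ker(M-\lambda I)\leq\rk M$. Applied to $T$ and $Q_2''$, and noting that the bottom $(n-4k)$-block of $\sum_iQ_i-\lambda I_n$ equals the invertible matrix $-\lambda I_{n-4k}$, this yields $\dim\ker(\sum_iQ_i-\lambda I_n)\leq\rk T+k=\rk(\sum_iQ_i)$. On the other hand $\dim\ker(\sum_iQ_i)=n-\rk(\sum_iQ_i)\geq n-3k\geq 3k\geq\rk(\sum_iQ_i)$ by $n\geq 6k$, so $0$ is the eigenvalue of maximal geometric multiplicity of $\sum_iQ_i$, which is exactly the desired condition. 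The main obstacle is that for $\ell\geq 3$ one cannot pack $\ell$ rank-$k$ conjugates into disjoint blocks without exceeding rank $3k$; the trick is to absorb all but $Q_2$ into a single $2k$-block, sacrificing the clean $\rk=\ell k$ picture and demanding only that the cluster's sum be nonzero. The hypothesis $n\geq 6k$ is tight for this argument, being exactly what guarantees $\dim\ker(\sum_iQ_i)\geq\rk(\sum_iQ_i)$.
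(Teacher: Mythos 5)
Your proof is correct, and it takes a genuinely different route from the paper's. The paper first establishes three pairwise claims about what ranks $\rk(Q+Q')$ can be achieved for $Q\sim P$, $Q'\sim P'$ (using Proposition~\ref{prop_tuplerank} to normalize each matrix so that all its rank sits in an off-diagonal $(n-m)\times m$ block), and then runs an induction on $\ell$ with a case split according to whether the running sum has rank $\leq 2k$ or $>2k$. You instead observe directly that any rank-$k$ matrix $P_i\in\gl_n$ is similar to $\Diag(P_i'',0_{n-2k})$ with $P_i''\in\gl_{2k}$ of rank $k$ --- a Fitting-type invariant-subspace decomposition, for which your over-any-field justification (choose $W\subseteq\ker P_i$ of dimension $n-2k$ meeting $\im P_i$ trivially, then a complement $V\supseteq\im P_i$) is sound --- and then you place $Q_1,Q_3,\dots,Q_\ell$ in a common $2k$-block and $Q_2$ in a disjoint one, using the non-scalarity of $P_1''$ (automatic since $0<k<2k$) to force the first cluster's sum $T$ nonzero. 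This yields $\rk(\sum_iQ_i)=\rk T+k\in(k,3k]$ in one shot, with no induction and no case analysis. Your verification of $\rk(\sum_iQ_i,I_n)=\rk(\sum_iQ_i)$ via geometric multiplicity of the eigenvalue $0$ is the same observation the paper makes, and both uses of $n\geq 6k$ agree. What your approach buys is a shorter, fully explicit construction; what the paper's approach buys is the reusable pairwise facts (0)--(2) about conjugate-sums of matrices of possibly different ranks, and the block form coming directly out of Proposition~\ref{prop_tuplerank}, which keeps the development self-contained within that machinery.
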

\begin{proof}
Let $P,P'\in\gl_n$ be matrices such that $\rk(P),\rk(P')\leq n/2-1$. We start with three claims.
\begin{itemize}
\item[(0)] For all $Q\sim P$ and $Q'\sim P'$, we have $\rk(Q+Q')\geq |\rk(P)-\rk(P')|$.
\item[(1)] There exist $Q\sim P$ and $Q'\sim P'$ with $\rk(Q+Q')=\rk(P)+\rk(P')$.
\item[(2)] There exist $Q\sim P$ and $Q'\sim P'$ with $\rk(Q+Q')\leq \max(\rk(P),\rk(P'))$.
\end{itemize}
Claim (0) is obvious. For (1) and (2), take $m=\max(\rk(P),\rk(P'))$ and note that
$$
P\sim\begin{pmatrix}TR&TRS\\R&RS\end{pmatrix}\sim\begin{pmatrix}I_m&-S\\&I_{n-m}\end{pmatrix}^{-1}\begin{pmatrix}TR&TRS\\R&RS\end{pmatrix}\begin{pmatrix}I_m&-S\\&I_{n-m}\end{pmatrix}=\begin{pmatrix}(S+T)R&0\\R&0\end{pmatrix}
$$
for some matrices $R,S,T$ with $R$ an $(n-m)\times m$ matrix of rank $\rk(P)$ by Proposition~\ref{prop_tuplerank}, because otherwise $\rk(P,I_n)<\rk(P)$ would hold. Similarly, we have 
$$
P'\sim\begin{pmatrix}\bullet&0\\R'&0\end{pmatrix}\sim\begin{pmatrix}\bullet&R''\\0&0\end{pmatrix}
$$
for some $(n-m)\times m$ matrix $R'$ and $m\times(n-m)$ matrix $R''$ that both have rank $\rk(P')$. Now (1) follows from the fact that
$$
\begin{pmatrix}\bullet&0\\R&0\end{pmatrix}+\begin{pmatrix}\bullet&R''\\0&0\end{pmatrix}
$$
has rank $\rk(P)+\rk(P')$ and (2) follows from the fact that 
$$
\begin{pmatrix}\bullet&0\\R&0\end{pmatrix}+\begin{pmatrix}\bullet&0\\R'&0\end{pmatrix}
$$
has rank at most $m$.\bigskip

Note that, since $6k\leq n$, if $Q\in\gl_n$ is a matrix with $\rk(Q)\leq 3k$, then $\rk(Q,I_n)$ equals $\rk(Q)$ as the eigenvalue $0$ must have the highest geometric multiplicity. So to prove the lemma it suffices to prove that
$$
k<\rk(Q_1+\dots+Q_\ell)\leq 3k
$$
for some $Q_1\sim P_1,\dots,Q_\ell\sim P_\ell$ using induction on $\ell$. For $\ell=2$ this follows from (1). Now suppose that $\ell>2$ and 
$$
k<\rk(Q_1+\dots+Q_{\ell-1})\leq 3k
$$
for some $Q_1\sim P_1,\dots,Q_{\ell-1}\sim P_{\ell-1}$. Using (1) if $\rk(Q_1+\dots+Q_{\ell-1})\leq 2k$ and using (0) and (2) otherwise, we see that
$$
k<\rk\left(g(Q_1+\dots+Q_{\ell-1})g^{-1}+Q_{\ell}\right))\leq 3k
$$
for some $g\in\GL_n$ and $Q_\ell\sim P_\ell$. Since $gQ_1g^{-1}\sim P_1$, \dots, $gQ_{\ell-1}g^{-1}\sim P_{\ell-1}$ and $Q_\ell\sim P_\ell$ this proves the lemma.
\end{proof}

Let $X$ be a $G$-stable closed subset of $V$ and let $X_i$ be the closure of the projection of $X$ to $\gl_{n_i}/\span(I_{n_i})$. 

\begin{lm}\label{lm_boundedtuplerankimpliesrk=0}
Suppose that $l_i+r_i\geq 2$ for all $i\in\NN$. If there exists a $k\in\ZZ_{\geq0}$ such that $X_i$ only contains elements $P\mod I_{n_i}$ with $\rk(P,I_{n_i})\leq k$ for all $i\gg0$, then $X\subseteq\{0\}$.
\end{lm}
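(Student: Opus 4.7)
The plan is to take an arbitrary $(P_j\mod I_{n_j})_j\in X$ and show, by iteratively halving a uniform bound on the rank of its ``non-scalar'' part, that it equals $0\in V$.

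For $j\gg0$, by hypothesis I can write $P_j=\lambda_j I_{n_j}+N_j$ with $\rk(N_j)\leq k$; once $n_j>2k$ the scalar $\lambda_j$ is uniquely determined by Remark~\ref{re_uniquehighgeomult}. The key step is a rank-boosting argument using the $G_{j+1}$-action. Conjugating $P_{j+1}$ by any $g\in G_{j+1}$ produces another element of $X$, whose image in $V_j$ equals
$$(l_j-r_j)\lambda_{j+1}I_{n_j}+\sum_{s=1}^{l_j}(gN_{j+1}g^{-1})_{ss}-\sum_{t=1}^{r_j}(gN_{j+1}g^{-1})_{l_j+t,l_j+t}^T\mod I_{n_j}.$$
I would pick $g=g'g_0$ with $g_0\in\GL_{n_{j+1}}$ generic and $g'$ block-diagonal with respect to the $(l_j+r_j+1)$-block decomposition of sizes $(n_j,\dots,n_j,z_j)$: from a rank factorization $N_{j+1}=XY^T$, a generic $g_0$ forces every diagonal $n_j\times n_j$ block of $g_0N_{j+1}g_0^{-1}$ to have rank exactly $\rho_{j+1}:=\rk(N_{j+1})$ (a generic row-projection of a rank-$\rho$ matrix onto $n_j\geq\rho$ rows has rank $\rho$); then iterated application of claim~(1) in the proof of Lemma~\ref{lm_higherrank} yields a block-diagonal $g'$ making the signed sum of those $l_j+r_j$ blocks have rank exactly $(l_j+r_j)\rho_{j+1}$.

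The resulting element of $X_j$ has non-scalar part of rank $(l_j+r_j)\rho_{j+1}$; for $n_j>2(l_j+r_j)k$ this equals its tuple rank (any other scalar shift would force rank $\geq n_j-(l_j+r_j)\rho_{j+1}>k$), so the hypothesis on $X_j$ gives $(l_j+r_j)\rho_{j+1}\leq k$, hence $\rho_{j+1}\leq\lfloor k/(l_j+r_j)\rfloor\leq\lfloor k/2\rfloor$ because $l_j+r_j\geq2$.

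Since $\{P\mod I_{n_j}:\rk(P,I_{n_j})\leq\lfloor k/2\rfloor\}$ is closed, this bound passes from the projection of $X$ to the closure $X_{j+1}$, so $X$ itself satisfies the lemma's hypothesis with $k$ replaced by $\lfloor k/2\rfloor$. Iterating $O(\log k)$ times forces $\rho_j=0$ for all $j$ sufficiently large, so $P_j\in\span(I_{n_j})$; since the transition maps $V_{j+1}\twoheadrightarrow V_j$ send scalar matrices to scalar matrices, $P_j\in\span(I_{n_j})$ for every $j$, i.e. $(P_j\mod I_{n_j})_j=0$. The main obstacle I foresee is the ``generic $g_0$'' step: one must verify that the locus of $g_0\in\SL_{n_{j+1}}$ making all $l_j+r_j$ diagonal blocks of $g_0N_{j+1}g_0^{-1}$ of maximal rank is a non-empty Zariski-open subset -- an intersection of finitely many open conditions, each non-empty by the rank-factorization argument, with the $\SL$-versus-$\GL$ distinction being harmless since scaling is central in the conjugation action.
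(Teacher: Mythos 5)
Your overall strategy is legitimate and genuinely different from the paper's: you fix a point $(P_j\bmod I_{n_j})_j\in X$ and try to halve a uniform rank bound on the ``non-scalar'' part, whereas the paper induces downward on the tuple-rank bound $k$ one unit at a time via the auxiliary statement~(*) in its proof. Your generic-$g_0$ step is fine (it is the same genericity argument the paper runs through Lemma~\ref{lm_topleftinv}), and the flagged ``$\SL$ vs.\ $\GL$'' point is indeed harmless. The computation of the image in $V_j$ of $gP_{j+1}g^{-1}$, and the passage from the projection of $X$ to its closure $X_{j+1}$, are also correct.

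The genuine gap is the claim that iterated applications of claim~(1) from Lemma~\ref{lm_higherrank} make the signed sum of the $l_j+r_j$ diagonal blocks have rank \emph{exactly} $(l_j+r_j)\rho_{j+1}$. Claim~(1) requires both summands to have rank at most $n_j/2-1$; after $s$ iterations the running sum has rank $s\rho_{j+1}$, so the final iteration requires $(l_j+r_j-1)\rho_{j+1}\leq n_j/2-1$, i.e.\ roughly $n_j\gtrsim 2(l_j+r_j)\rho_{j+1}$, which is exactly the hypothesis $n_j>2(l_j+r_j)k$ you later invoke for the tuple-rank identification. But $l_j+r_j$ is \emph{not} bounded across the sequence: one only knows $l_j+r_j\leq n_{j+1}/n_j$, and one can have, e.g., $r_j=z_j=0$ and $l_j=n_j$, so that $n_{j+1}=n_j^2$ and $n_j>2(l_j+r_j)k$ reduces to the false statement $1>2k$. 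In that case no $j$ (let alone $j\gg0$) satisfies your condition, the halving never starts, and the argument breaks down. Even the weaker goal of reaching rank $\geq 2\rho_{j+1}$ after adding all $l_j+r_j$ blocks does not follow from claims~(0)--(2) as stated (once the running rank exceeds $2\rho_{j+1}$, subsequent additions can drop it back toward $\rho_{j+1}$), so this would need a new argument. The paper's Lemma~\ref{lm_higherrank} avoids the issue entirely by steering the running rank into the fixed window $(k,3k]$ regardless of the number $\ell$ of summands, at the cost of only guaranteeing a rank strictly greater than $k$; this is why the paper's hypothesis is merely $n_i\geq 6k$ (automatic for $i\gg 0$) and why it decrements $k$ by one per step rather than halving. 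To repair your proof in the same spirit, you would need a variant of Lemma~\ref{lm_higherrank} that keeps the running rank in a window such as $(2\rho,4\rho]$, uniformly in the number of summands.
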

\begin{proof}
The lemma follows by induction on $k$ from the following statement.
\begin{itemize}
\item[(*)] Let $k,i\in\NN$ be integers such that $n_i\geq 6k$. If $X_{i+1}$ contains an element $P\mod I_{n_{i+1}}$ with $\rk(P,I_{n_{i+1}})=k$, then $X_i$ contains an element $Q\mod I_{n_i}$ with $\rk(Q,I_{n_i})>k$.
\end{itemize}
Let $k,i\in\NN$ be integers such that $n_i\geq 6k$ and let $P\mod I_{n_{i+1}}$ be an element of $X_{i+1}$ with $\rk(P,I_{n_{i+1}})=k$. By replacing the representative of the element $P\mod I_{n_{i+1}}$, we may assume that $\rk(P)=k$. By Lemma \ref{lm_topleftinv}, we have
$$
gPg^{-1}=\begin{pmatrix}P_{11}&\dots&P_{1l_i}&\bullet&\dots&\bullet&\bullet\\\vdots&&\vdots&\vdots&&\vdots&\vdots\\P_{l_i1}&\dots&P_{l_il_i}&\bullet&\dots&\bullet&\bullet\\\bullet&\dots&\bullet&Q_{11}&\dots&Q_{1r_i}&\bullet\\\vdots&&\vdots&\vdots&&\vdots&\vdots\\\bullet&\dots&\bullet&Q_{r_i1}&\dots&Q_{r_ir_i}&\bullet\\\bullet&\dots&\bullet&\bullet&\dots&\bullet&\bullet\end{pmatrix}
$$
for $P_{11},\dots,P_{l_i,l_i},Q_{11},\dots,Q_{r_ir_i}\in\gl_{n_i}$ with $\rk(P_{11})=k$ for some matrix $g\in\GL_{n_{i+1}}$. Since this is an open condition on $g$, the matrix $gPg^{-1}$ is in fact of this form for sufficiently general $g\in\GL_{n_{i+1}}$. This allows us to assume that $\rk(P_{jj})=k$ for all $j\in[l_i]$ and $\rk(-Q_{\ell\ell}^T)=\rk(Q_{\ell\ell})=k$ for all $\ell\in[r_i]$. Lemma \ref{lm_higherrank} now tell us that by replacing $g$ by $\Diag(g_1,\dots,g_{l_i+r_i},I_{z_i})g$ for some $g_1,\dots,g_{l_i+r_i}\in\GL_{n_i}$, we may also assume that
$$
Q=\sum_{j=1}^{l_i}P_{jj}-\sum_{\ell=1}^{r_i}Q_{\ell\ell}^{T}
$$
satisfies $k<\rk\left(Q,I_{n_i}\right)$ and this proves (*). 
\end{proof}

Let $n\in\NN$ be a multiple of $\cha(K)$. Then the trace function on $\gl_n$ is an element of $K[\gl_n/\span(I_n)]^{\SL_n}$. Note that if $\cha(K)\mid n_i$ and $z_i=0$, then $\cha(K)\mid n_{i+1}$. So if in addition $\cha(K)=2$ or $r_i=0$, then the map
\begin{eqnarray*}
\gl_{n_{i+1}}/\span(I_{n_{i+1}})&\twoheadrightarrow&\gl_{n_i}/\span(I_{n_i})\\
\begin{pmatrix}P_{11}&\dots&P_{1l_i}&\bullet&\dots&\bullet\\\vdots&&\vdots&\vdots&&\vdots\\P_{l_i1}&\dots&P_{l_il_i}&\bullet&\dots&\bullet\\\bullet&\dots&\bullet&Q_{11}&\dots&Q_{1r_i}\\\vdots&&\vdots&\vdots&&\vdots\\\bullet&\dots&\bullet&Q_{r_i1}&\dots&Q_{r_ir_i}\end{pmatrix}\mod I_{n_{i+1}}&\mapsto&\sum_{k=1}^{l_i}P_{kk}-\sum_{\ell=1}^{r_i}Q_{\ell\ell}^{T}\mod I_{n_i}.
\end{eqnarray*}
commutes with taking the trace.

\begin{de}
When $\cha(K)\mid n_i$ and $z_i=0$ for all $i\gg0$ and in addition $\cha(K)=2$ or $r_i=0$ for all $i\gg0$, define the trace of an element $(P_i\mod I_{n_i})_i\in V$ to be the $\mu\in K$ such that $\tr(P_i)=\mu$ for all $i\gg0$. Otherwise, define the trace of any element of $V$ to be zero.
\end{de}

Note that in all cases the trace of an element of $V$ is $G$-invariant. For $\mu\in K$, denote the $G$-stable closed subset $\{P\in V\mid\tr(P)=\mu\}$ of $V$ by $Y_\mu$. Denote the closure of the projection of $Y_{\mu}$ to $\gl_{n_i}/\span(I_{n_i})$ by $Y_{\mu,i}$.

\begin{thm}\label{thm_done}
Assume that $l_i+r_i\geq2$ for all $i\in\NN$ and that $X\subsetneq Y_\mu$ for some $\mu\in K$. Suppose that for all $i\in\NN$ such that $I(Y_{\mu,i})\subsetneq I(X_i)$ and for all non-zero polynomials $f\in I(X_i)\setminus I(Y_{\mu,i})$ of minimal degree, the span of the $\SL_{n_{i+1}}$-orbit of the polynomial 
$$
f(P_{11}+\dots+P_{l_il_i}-Q_{11}^{T}-\dots-Q_{r_ir_i}^{T})\in I(X_{i+1})
$$
contains a non-zero off-diagonal polynomial. Then either $X=\emptyset$ or $X=\{0\}$.
\end{thm}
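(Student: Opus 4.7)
The plan is to use the hypothesis at every sufficiently large level to produce a non-zero off-diagonal polynomial in $I(X_{j+1})$ of uniformly bounded degree, and then conclude by Lemma~\ref{lm_lowranklowtuplerank} (through Remark~\ref{re_lowranklowtuplerank}) and Lemma~\ref{lm_boundedtuplerankimpliesrk=0} that $X \subseteq \{0\}$. Since both $X$ and $Y_\mu$ are the inverse limits of their respective closure-of-projection systems $X_j \subseteq Y_{\mu,j}$, the strict inclusion $X \subsetneq Y_\mu$ forces $I(Y_{\mu,i_0}) \subsetneq I(X_{i_0})$ for some index $i_0$. I would fix a minimal-degree element $f \in I(X_{i_0}) \setminus I(Y_{\mu,i_0})$ of degree $d$.

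For every $j \geq i_0$ I claim the minimal degree $d_j$ of an element of $I(X_j) \setminus I(Y_{\mu,j})$ satisfies $d_j \leq d$. Indeed, pulling $f$ back along the tower of projections gives $\tilde f \in K[V_j]$ of degree $d$ which lies in $I(X_j)$ because the projection $X_j \to X_{i_0}$ is dominant, while $\tilde f \in I(Y_{\mu,j})$ would force $f \in I(Y_{\mu,i_0})$ since the projection $Y_{\mu,j} \to Y_{\mu,i_0}$ is also dominant (both sides being closures of the image of $Y_\mu$). Fixing such a minimal-degree $f_j$ for each $j \geq i_0$, I invoke the hypothesis: the $\SL_{n_{j+1}}$-span of the pullback $f_j(P_{11} + \cdots + P_{l_j l_j} - Q_{11}^T - \cdots - Q_{r_j r_j}^T) \in I(X_{j+1})$ is contained in $I(X_{j+1})$ by $\SL_{n_{j+1}}$-stability and contains a non-zero off-diagonal polynomial $h_{j+1}$. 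Group translates and linear combinations cannot raise degree, so $\deg h_{j+1} \leq d_j \leq d$; Remark~\ref{re_lowranklowtuplerank} then gives $\rk(P, I_{n_{j+1}}) < d$ for every $P \mod I_{n_{j+1}} \in X_{j+1}$, uniformly in $j \geq i_0$.

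With this uniform tuple-rank bound at every sufficiently large level, Lemma~\ref{lm_boundedtuplerankimpliesrk=0}, whose hypothesis $l_i + r_i \geq 2$ is assumed, yields $X \subseteq \{0\}$; since $X$ is closed, either $X = \emptyset$ or $X = \{0\}$. The main obstacle is the dominance/density bookkeeping in the second paragraph, namely checking that the closure-of-projection definition of $Y_{\mu,j}$ is compatible with the tower of projections so that a polynomial outside $I(Y_{\mu,i_0})$ pulls back to a polynomial outside $I(Y_{\mu,j})$. Once this is granted, the remainder is a mechanical assembly of the hypothesis with the earlier rank lemmas.
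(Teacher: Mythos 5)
Your proof is correct and follows essentially the same route as the paper: fix an index where $I(Y_{\mu,i}) \subsetneq I(X_i)$, observe the minimal degrees $d_j$ are non-increasing via pullback, use the hypothesis to get off-diagonal polynomials of uniformly bounded degree, and then combine Remark~\ref{re_lowranklowtuplerank} with Lemma~\ref{lm_boundedtuplerankimpliesrk=0}. The one point you flagged as needing care (that a polynomial outside $I(Y_{\mu,i_0})$ pulls back to one outside $I(Y_{\mu,j})$) is handled correctly by your dominance argument; the paper states the same monotonicity facts more tersely without detail.
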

\begin{proof}
Since $X$ is strictly contained in $Y_{\mu}$, there exists an integer $j\geq 2$ such that $I(Y_{\mu,j})\subsetneq I(X_j)$. Note that $I(Y_{\mu,i})\subsetneq I(X_i)$ for all integers $i\geq j$. For all $i\geq j$, let $f_i\in I(X_i)\setminus I(Y_{\mu,i})$ be an element of minimal degree $d_i$. Then $d_i\leq d_j$ for all $i\geq j$ and by choosing $j$ large enough we may assume that $d_j\leq n_j$.\bigskip

For $i\geq j$, let $g_i\in I(X_{i+1})$ be a non-zero off-diagonal polynomial contained in the span of the $\SL_{n_{i+1}}$-orbit of $f_i(P_{11}+\dots+P_{l_il_i}-Q_{11}^{T}-\dots-Q_{r_ir_i}^{T})$. Then $\deg(g)\leq d_i\leq d_j\leq n_j\leq n_{i+1}/2$ since $n_{i+1}=(l_i+r_i)n_i+z_i\geq 2n_i$. So by Remark \ref{re_lowranklowtuplerank} and Lemma \ref{lm_boundedtuplerankimpliesrk=0}, we see that $X\subseteq\{0\}$.
\end{proof}

\begin{cor}\label{cor_done}
Assume that $l_i+r_i\geq 2$ for all $i\in\NN$. Suppose that for all $\mu\in K$, for all $G$-stable closed subsets $X\subsetneq Y_\mu$, for all $i\in\NN$ such that $I(Y_{\mu,i})\subsetneq I(X_i)$ and for all non-zero polynomials $f\in I(X_i)\setminus I(Y_{\mu,i})$ of minimal degree, the span of the $\SL_{n_{i+1}}$-orbit of the polynomial 
$$
f(P_{11}+\dots+P_{l_il_i}-Q_{11}^{T}-\dots-Q_{r_ir_i}^{T})\in I(X_{i+1})
$$
contains a non-zero off-diagonal polynomial. Then the irreducible $G$-stable closed subsets of $V$ are the non-empty subsets among $\{0\}$, $V$ and $\{v\in V\mid\tr(v)=\mu\}$ for $\mu\in K$ and every $G$-stable closed subset of $V$ is a finite union of irreducible $G$-stable closed subsets.
\end{cor}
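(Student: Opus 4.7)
The plan is to combine Theorem~\ref{thm_done} with the fact that $K$ is one-dimensional in order to classify every $G$-stable closed subset of $V$. First, the hypothesis of the corollary is precisely that of Theorem~\ref{thm_done}, which tells us that every $G$-stable closed subset of a fixed fibre $Y_\mu$ is one of $\emptyset$, $\{0\}$ (the latter being possible only for $\mu=0$, since $\{0\}\subseteq Y_\mu$ only then), or $Y_\mu$ itself. Each set in the proposed irreducible list is clearly $G$-stable, and irreducibility is checked via Proposition~\ref{prop_irr2irrinf}: $\{0\}$ is a point; $V$ is irreducible since every $\gl_{n_i}/\span(I_{n_i})$ is an affine space; and each non-empty $Y_\mu$ is irreducible because for $i\gg 0$ the closure of its projection is the affine hyperplane $\{\tr=\mu\}$ in $\gl_{n_i}/\span(I_{n_i})$.

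Next, let $X$ be an arbitrary $G$-stable closed subset of $V$. In the ``otherwise'' branch of the definition of trace we have $V=Y_0$, so the conclusion follows directly from Theorem~\ref{thm_done}. In the ``when'' branch, $\tr\colon V\to K$ is a well-defined $G$-invariant morphism and $V=\bigsqcup_{\mu\in K}Y_\mu$; for every $\mu$ the set $X\cap Y_\mu$ is a $G$-stable closed subset of $Y_\mu$ and therefore lies in $\{\emptyset,\{0\},Y_\mu\}$. The decisive observation is that the closure $\overline{\tr(X)}\subseteq K$ is either finite or all of $K$, because $K$ has dimension one. If $\overline{\tr(X)}=\{\mu_1,\dots,\mu_\ell\}$ is finite, then $X\subseteq Y_{\mu_1}\cup\dots\cup Y_{\mu_\ell}$ and the decomposition $X=\bigcup_j(X\cap Y_{\mu_j})$ exhibits $X$ as a finite union of sets from the irreducible list.

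If instead $\overline{\tr(X)}=K$, then $T=\tr(X)\setminus\{0\}$ is infinite, and Theorem~\ref{thm_done} forces $X\cap Y_\mu=Y_\mu$ for each $\mu\in T$; hence $X\supseteq\bigcup_{\mu\in T}Y_\mu$. For every large $i$ the projection of this union to $\gl_{n_i}/\span(I_{n_i})$ consists of infinitely many distinct affine hyperplanes over the infinite field $K$ and is therefore dense, so the closure $X_i$ of the projection of $X$ equals $\gl_{n_i}/\span(I_{n_i})$ for all such $i$. Since ideals in $K[V]=\bigcup_i K[\gl_{n_i}/\span(I_{n_i})]$ are detected levelwise, this forces $X=V$.

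The main obstacle I expect is this last passage from levelwise density to the equality $X=V$ in the inverse limit, although it reduces to the observation that the coordinate ring of $V$ is the union of those of the finite levels. Once Theorem~\ref{thm_done} is available, everything else is a soft finite-or-cofinite dichotomy on the image of the trace, plus a routine verification of irreducibility via Proposition~\ref{prop_irr2irrinf}.
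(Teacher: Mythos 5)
Your proof is correct and follows essentially the same strategy as the paper: reduce everything to the fibers $Y_\mu$ via Theorem~\ref{thm_done} and then exploit the one-dimensionality of $K$ (a closed subset is finite or all of $K$). The technical implementation differs slightly. The paper introduces the affine slice $\varphi\colon K\to V$, $\mu\mapsto ((\mu+1)E_{11}-E_{22}\mod I_{n_i})_i$, which satisfies $\tr\circ\varphi=\id_K$, and observes that $\varphi^{-1}(X)=\{\mu\in K\mid Y_\mu\subseteq X\}$ is automatically a \emph{closed} subset of $K$, being the preimage of a closed set under a morphism; the dichotomy ``finite or all of $K$'' then falls out immediately, and in the latter case $V=\bigsqcup_\mu Y_\mu\subseteq X$ with no further work. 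You instead consider the \emph{image} $\tr(X)$, whose closure is what one controls, and this forces the extra step at the end (passing from levelwise density of $\bigcup_{\mu\in T}Y_\mu$ to $X=V$). That step is correct — distinct fibres project to distinct parallel hyperplanes, an infinite union of which is dense over an infinite field, and $X$ being the inverse limit of the closures of its projections gives $X=V$ — but the slice map sidesteps it. Both versions are valid; the paper's is marginally tighter because it replaces a density argument by a pullback.
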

\begin{proof}
Using Proposition \ref{prop_irr2irrinf}, it is easy to check that the mentioned subsets are either irreducible or empty. If the trace map on $V$ is zero, this is just Theorem \ref{thm_done} applied to $\mu=0$. Assume the trace map is non-zero. Then the linear map
\begin{eqnarray*}
\varphi\colon K&\rightarrow&V\\
\mu&\mapsto& ((\mu+1)E_{11}-E_{22}\mod I_{n_i})_i.
\end{eqnarray*}
has the property that $\tr(\varphi(\mu))=\mu$ for all $\mu\in K$. Let $X$ be a $G$-stable closed subset of~$V$. Then 
$$
\varphi^{-1}(X)=\left\{\mu\in K~\middle|~Y_\mu\subseteq X\right\}
$$
is a closed subset of $K$. So either $\varphi^{-1}(X)$ is finite or $\varphi^{-1}(X)=K$. By Theorem \ref{thm_done}, the intersection of $X$ with $Y_0$ is either $\emptyset$, $\{0\}$ or $Y_0$ and the intersection of $X$ with $Y_\mu$ for $\mu\in K\setminus\{0\}$ is either $\emptyset$ or $Y_\mu$. So either
$$
X=\{0\}\cup\bigcup_{\mu\in\varphi^{-1}(X)\setminus\{0\}}Y_\mu
$$
or
$$
X=\bigcup_{\mu\in\varphi^{-1}(X)}Y_\mu
$$
when $\varphi^{-1}(X)$ is finite and $X=V$ when $\varphi^{-1}(X)=K$.
\end{proof}

What remains is reduce the cases (2)-(4b) of Theorem \ref{thm_mainsl} to sequences
$$\begin{tikzcd}
\SL_{n_1} \arrow[hook,"\iota_1"]{r} & \SL_{n_2} \arrow[hook,"\iota_2"]{r} & \SL_{n_3} \arrow[hook,"\iota_3"]{r} & \dots
\end{tikzcd}$$
where the conditions of the corollary are satisfied.

\subsubsection*{Case (2): $\alpha+\beta=\gamma=\infty$}

Since $\gamma=\infty$, we do not have $z_i=0$ for all $i\gg0$. So we get $Y_0=V$ and $Y_\mu=\emptyset$ for all $\mu\in K\setminus\{0\}$. By restricting to an infinite subsequence we may assume that $l_i+r_i\geq2$ and $z_i\geq n_i$ for all $i\in\NN$. Let $i\in\NN$ be such that $I(X_i)\neq0$ and let $f\in I(X_i)$ be a non-zero polynomial of minimal degree. Take $l=l_i$, $r=r_i$, $z=z_i$, $m=n_i$ and $n=n_{i+1}=(l+r)m+z$. To prove that the conditions of Corollary~\ref{cor_done} are satisfied, we need to check the following condition:
\begin{itemize}
\item[(*)] The span of the $\SL_n$-orbit of the polynomial 
$$
g:=f(P_{11}+\dots+P_{ll}-Q_{11}^{T}-\dots-Q_{rr}^{T})
$$
contains a non-zero off-diagonal polynomial.
\end{itemize}
Consider the matrix
$$
H=\begin{pmatrix}P_{11}&\dots&P_{1l}&\bullet&\dots&\bullet&\bullet\\\vdots&&\vdots&\vdots&&\vdots&\vdots\\P_{l1}&\dots&P_{ll}&\bullet&\dots&\bullet&\bullet\\\bullet&\dots&\bullet&Q_{11}&\dots&Q_{1r}&\bullet\\\vdots&&\vdots&\vdots&&\vdots&\vdots\\\bullet&\dots&\bullet&Q_{r1}&\dots&Q_{rr}&\bullet\\R_1&\dots&R_l&\bullet&\dots&\bullet&\bullet\\\bullet&\dots&\bullet&\bullet&\dots&\bullet&\bullet\end{pmatrix}
$$
where $P_{k,\ell},Q_{k,\ell},R_k\in\gl_m$. For $\lambda\in K$, consider the matrix
$$
A(\lambda)=\begin{pmatrix}I_m&&&&&&\lambda I_m&\\&\ddots&&&&&\\&&I_m&&&\\&&&I_m&&\\&&&&\ddots&&\\&&&&&I_m\\&&&&&&I_m\\&&&&&&&I_{z-m}\end{pmatrix}.
$$
For all $\lambda\in K$, we have
$$
A(\lambda)HA(\lambda)^{-1}=\begin{pmatrix}P_{11}'&\dots&P_{1l}'&\bullet&\dots&\bullet&\bullet\\\vdots&&\vdots&\vdots&&\vdots&\vdots\\P_{l1}'&\dots&P_{ll}'&\bullet&\dots&\bullet&\bullet\\\bullet&\dots&\bullet&Q_{11}&\dots&Q_{1r}&\bullet\\\vdots&&\vdots&\vdots&&\vdots&\vdots\\\bullet&\dots&\bullet&Q_{r1}&\dots&Q_{rr}&\bullet\\\bullet&\dots&\bullet&\bullet&\dots&\bullet&\bullet\end{pmatrix}
$$
where $P_{11}'=P_{11}+\lambda R_1$ and $P_{jj}'=P_{jj}$ for all $j\in\{2,\dots,l\}$. This means that if we let $A(\lambda)$ act on $g$, we obtain the polynomial $h(\lambda)=f(P_{11}+\dots+P_{ll}-Q_{11}^T-\dots-Q_{rr}^T+\lambda R_1)$. Let $d$ be the degree of $f$ and let $f_d=f_d(P)$ be the homogeneous part of $f$ of degree~$d$. Then $f_d(R_1)$ is a non-zero off-diagonal polynomial on $\gl_n$ since $m\leq(n-1)/2$. Since $f_d(R_1)$ is the coefficient of $h(\lambda)$ at $\lambda^d$, it is contained in this span of the $h(\lambda)$. So (*) holds. So we can apply Corollary \ref{cor_done} and this proves Theorem \ref{thm_mainsl} in case (2).

\subsubsection*{Case (3a): $\beta=\infty$, $\gamma<\infty$ and $\cha(K)\neq 2$ or $2\nmid n_i$ for all $i\gg0$}

We do not have $r_i=0$ for all $i\gg0$. Furthermore, if $\cha(K)=2$, then $\cha(K)\mid n_i$ for all $i\gg0$ does not hold. So we again get $Y_0=V$ and $Y_\mu=\emptyset$ for all $\mu\in K\setminus\{0\}$. By restricting to an infinite subsequence we may assume that $r_i>0$, $l_i+r_i>2$ and $z_i=0$ for all $i\in\NN$. To assume that $l_i+r_i>2$, we use \cite[Proposition 2.4]{baranov-zhilinskii}. If $\cha(K)=2$, we may furthermore assume that $2\nmid n_i$ for all $i\in\NN$. Let $i\in\NN$ be such that $I(X_i)\neq0$ and let $f\in I(X_i)$ be a non-zero polynomial of minimal degree. Take $l=l_i$, $r=r_i$, $m=n_i$ and $n=n_{i+1}=(l+r)m$. To prove that the conditions of Corollary \ref{cor_done} are satisfied, we need to check the following condition:
\begin{itemize}
\item[(*)] The span of the $\SL_n$-orbit of the polynomial 
$$
g:=f(P_{11}+\dots+P_{ll}-Q_{11}^{T}-\dots-Q_{rr}^{T})
$$
contains a non-zero off-diagonal polynomial.
\end{itemize}
Consider the matrix
$$
H=\begin{pmatrix}P_{11}&\dots&P_{1l}&\bullet&\dots&\bullet\\\vdots&&\vdots&\vdots&&\vdots\\P_{l1}&\dots&P_{ll}&\bullet&\dots&\bullet\\R_{11}&\dots&R_{1l}&Q_{11}&\dots&Q_{1r}\\\vdots&&\vdots&\vdots&&\vdots\\R_{r1}&\dots&R_{rl}&Q_{r1}&\dots&Q_{rr}\end{pmatrix}
$$
where $P_{k,\ell},Q_{k,\ell},R_k\in\gl_m$. Also consider the matrix
$$
A(\Lambda)=\begin{pmatrix}I_m&&&\Lambda\\&\ddots\\&&I_m\\&&&I_m&&\\&&&&\ddots\\&&&&&I_m\end{pmatrix}
$$
for $\Lambda\in\gl_m$. For all $\Lambda\in\gl_m$, we have
$$
A(\Lambda)HA(\Lambda)^{-1} = \begin{pmatrix}P_{11}'&\dots&P_{1l}'&\bullet&\dots&\bullet\\\vdots&&\vdots&\vdots&&\vdots\\P_{l1}'&\dots&P_{ll}'&\bullet&\dots&\bullet\\\bullet&\dots&\bullet&Q_{11}'&\dots&Q_{1r}'\\\vdots&&\vdots&\vdots&&\vdots\\\bullet&\dots&\bullet&Q_{r1}'&\dots&Q_{rr}'\end{pmatrix}
$$
where
\begin{eqnarray*}
P_{11}'&=&P_{11}+\Lambda R_{11}\\
P_{jj}'&=&P_{jj}\mbox{ for $j\in\{2,\dots,l\}$}\\
Q_{11}'&=&Q_{11}-R_{11}\Lambda\\
Q_{jj}'&=&Q_{\ell\ell}\mbox{ for $\ell\in\{2,\dots,r\}$}. 
\end{eqnarray*}
This means that if we let $A(\Lambda)$ act on the polynomial $g$, we obtain the polynomial $h(\Lambda)=f(P_{11}+\dots+P_{ll}-Q_{11}^T-\dots-Q_{rr}^T+\Lambda R_{11}+\Lambda^T R_{11}^T)$. Let $d$ be the degree of $f$ and let $f_d=f_d(P)$ be the homogeneous part of $f$ of degree $d$. Then we see that the homogeneous part of $h(\Lambda)$ of degree $d$ in the coordinates of $\Lambda$ equals $f_d(\Lambda R_{11}+\Lambda^T R_{11}^T)$.\bigskip

To prove that $f_d(\Lambda R_{11}+\Lambda^T R_{11}^T)$ is non-zero as a polynomial in $\Lambda$ and $R_{11}$, we will use reduction rules for graphs. See for example \cite{bodlaender-antwerpen} for more on this. Let $\Gamma$ be an undirected multigraph. Denote its vertex and edge sets by $V(\Gamma)$ and $E(\Gamma)$.

\medskip
\begin{de}~
We consider the following three reduction rules:
\begin{itemize}
\item[(1)] Remove an edge from $\Gamma$.
\item[(2)] Remove a vertex of $\Gamma$ that has at least one loop.
\item[(3)] Pick a vertex $v$ of $\Gamma$ that has a least one loop. Replace an edge of $\Gamma$ with endpoints $v\neq w$ by a loop at $w$. 
\end{itemize}
We say that $\Gamma$ reduces to a multigraph $\Gamma'$ if $\Gamma'$ can be obtained from $\Gamma$ by applying a series of reductions.
\end{de}

\begin{lm}
If $\Gamma$ reduces to the empty graph, then the linear map
\begin{eqnarray*}
\ell_\Gamma\colon K^{E(\Gamma)}&\rightarrow&K^{V(\Gamma)}\\
(x_e)_e&\mapsto&\left(\sum_{e\ni v}x_e\right)_v
\end{eqnarray*}  
is surjective. Here entries corresponding to loops are only added once.
\end{lm}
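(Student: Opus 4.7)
The plan is to induct on the number of reduction steps needed to reduce $\Gamma$ to the empty graph. The base case is immediate: when $\Gamma$ is empty, the codomain $K^{V(\Gamma)}$ is zero, so $\ell_\Gamma$ is vacuously surjective. For the inductive step, suppose $\Gamma$ reduces in one step to a graph $\Gamma'$ for which $\ell_{\Gamma'}$ is already known to be surjective. Given any target $y\in K^{V(\Gamma)}$, I will lift a preimage $x'$ of (a suitable restriction of) $y$ under $\ell_{\Gamma'}$ to a preimage $x\in K^{E(\Gamma)}$ of $y$ under $\ell_\Gamma$. I handle the three reduction rules separately.

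For rule (1), where $\Gamma'=\Gamma\setminus\{e\}$, the vertex sets coincide, so one simply takes $x'\in K^{E(\Gamma')}$ with $\ell_{\Gamma'}(x')=y$ and extends to $E(\Gamma)$ by declaring $x_e=0$; the vertex sums are unchanged. For rule (2), where a vertex $v$ with a loop $e_0$ is deleted, let $y'$ be the restriction of $y$ to $V(\Gamma')=V(\Gamma)\setminus\{v\}$ and take $x'\in K^{E(\Gamma')}$ with $\ell_{\Gamma'}(x')=y'$. Define $x\in K^{E(\Gamma)}$ by $x_f=x'_f$ for edges $f\in E(\Gamma')$, set $x_f=0$ for every edge incident to $v$ other than the loop $e_0$, and set $x_{e_0}=y_v$. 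For every vertex $u\neq v$, the contributions from edges deleted with $v$ are all zero by construction, so the sum at $u$ matches $\ell_{\Gamma'}(x')_u=y_u$; and the sum at $v$ is just $x_{e_0}=y_v$ since loops are counted once.

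The subtlest case is rule (3): $\Gamma'$ is obtained from $\Gamma$ by picking a vertex $v$ with a loop $e_0$ and replacing an edge $e$ with endpoints $v\neq w$ by a loop $e'$ at $w$. Here $V(\Gamma')=V(\Gamma)$ and $E(\Gamma')=(E(\Gamma)\setminus\{e\})\cup\{e'\}$. Take $x'\in K^{E(\Gamma')}$ with $\ell_{\Gamma'}(x')=y$, and define
\[
x_f=x'_f \text{ for } f\in E(\Gamma)\setminus\{e,e_0\},\qquad x_e=x'_{e'},\qquad x_{e_0}=x'_{e_0}-x'_{e'}.
\]
For vertices $u\notin\{v,w\}$ nothing changes. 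At $w$ the loop $e'$ in $\Gamma'$ contributes $x'_{e'}$ (loops counted once), while in $\Gamma$ the edge $e$ contributes the equal amount $x_e=x'_{e'}$, so the sum at $w$ still equals $y_w$. At $v$ the situation in $\Gamma$ differs from $\Gamma'$ by gaining the contribution $x_e=x'_{e'}$ from $e$ and by replacing $x'_{e_0}$ by $x_{e_0}=x'_{e_0}-x'_{e'}$; these two modifications cancel, so the sum at $v$ remains $y_v$. The key point, and the only place the hypothesis that $v$ carries a loop is used, is precisely that such a loop at $v$ supplies a one-dimensional degree of freedom in $K^{E(\Gamma)}$ that touches only $v$ and can absorb the extra $x'_{e'}$ coming from $e$.

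The main conceptual obstacle is exactly this compensation in rule (3): one must check that the correction is possible without affecting the sums at the other endpoint of $e$, which is why the replacement must be by a loop at $w$ rather than by an edge back to $v$ or to a third vertex. Once these three cases are verified, the induction closes and surjectivity of $\ell_\Gamma$ follows whenever $\Gamma$ reduces to the empty graph.
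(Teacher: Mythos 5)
Your proof is correct and takes essentially the same approach as the paper: induction on the number of reduction steps, lifting preimages rule by rule. The paper's own proof is terser, merely noting that rule (1) is easy and that rules (2) and (3) follow from the observation that $x_e$ appears only in coordinate $v$ when $e$ is a loop at $v$; your explicit liftings (in particular the compensation $x_{e_0}=x'_{e_0}-x'_{e'}$ in rule (3)) spell out exactly how that observation is used.
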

\begin{proof}
If $\Gamma$ is the empty graph, then $\ell_\Gamma$ is surjective. So it suffices to check that $\ell_\Gamma$ is surjective whenever we have a reduction $\Gamma'$ of $\Gamma$ such that the similarly defined map $\ell_{\Gamma'}$ is surjective. When $\Gamma'$ is obtained from $\Gamma$ by applying reduction rule (1), this is easy. The other cases follow from the fact that $x_e$ only appears in coordinate $v$ when $e$ is a loop with endpoint $v$.
\end{proof}

\smallskip

\begin{lm}~\label{lm_char2}
\begin{itemize}
\item[(a)] If $\cha(K)\neq 2$, then $\{PQ+P^TQ^T\mid P,Q\in\gl_n\}=\gl_n$ for all $n\in\NN$.
\item[(b)] If $\cha(K)=2$, then $\{PQ+P^TQ^T\mid P,Q\in\gl_n\}$ is dense in $\sl_n$ for all $n\in\NN$.
\end{itemize}
\end{lm}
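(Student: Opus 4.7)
The plan is to handle (a) and (b) by very different arguments.

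For part (a), I would invoke the classical theorem of Frobenius: every square matrix over a field factors as a product of two symmetric matrices. Given $M\in\gl_n$, fix such a factorization $M=AB$ with $A^T=A$ and $B^T=B$, and set $P:=\tfrac12 A$ and $Q:=B$ (permitted because $\cha(K)\neq 2$). Both $P$ and $Q$ are symmetric, so
\[
PQ+P^TQ^T=PQ+PQ=2PQ=AB=M,
\]
placing every element of $\gl_n$ in the image.

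For part (b), the identity
\[
\tr(PQ+P^TQ^T)=\tr(PQ)+\tr((QP)^T)=2\tr(PQ)=0
\]
shows that in characteristic $2$ the image is automatically contained in $\sl_n$. To establish density I would show that the morphism $F\colon\gl_n\times\gl_n\to\sl_n$, $F(P,Q):=PQ+P^TQ^T$, is dominant onto the irreducible variety $\sl_n$. By upper semicontinuity of rank, this reduces to exhibiting a single base point $(P_0,Q_0)$ at which the differential
\[
dF_{(P_0,Q_0)}(P',Q')=P'Q_0+P_0Q'+(P')^TQ_0^T+P_0^T(Q')^T
\]
surjects onto $\sl_n$. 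For $n=2$ the pair $(P_0,Q_0)=(E_{11},E_{21})$ already works: a direct calculation shows that the image of $dF$ is exactly $\{M\in\gl_2 : M_{11}=M_{22}\}$, which coincides with $\sl_2$ in characteristic~$2$. For general $n$ I would try a sparse choice of $(P_0,Q_0)$ coupling different rows and columns, and back this up with the just-proved graph-reduction lemma: the diagonal entries of $PQ+P^TQ^T$ in characteristic $2$ equal $\ell_\Gamma$ applied to the edge-weights $x_{\{a,i\}}:=P_{ai}Q_{ia}+P_{ia}Q_{ai}$ on the complete graph $K_n$ (without loops), and one checks that $\ell_{K_n}$ surjects onto the trace-zero hyperplane of $K^n$. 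The off-diagonal entries can then be adjusted using the diagonal entries $P_{ii},Q_{jj}$ as free parameters, since these drop out of the diagonal computation in characteristic $2$.

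The main obstacle lies in part (b): in characteristic $2$ the usual symmetric/skew-symmetric splitting of $\gl_n$ degenerates, so the image of $F$ does not split cleanly along a diagonal/off-diagonal decomposition. One has to engineer the base point $(P_0,Q_0)$ so that diagonal and off-diagonal variations of $(P,Q)$ simultaneously cover all of $\sl_n$, and verifying surjectivity of $dF$ uniformly in $n$ — rather than a case-by-case check — is the delicate step that the graph-reduction framework is designed to address.
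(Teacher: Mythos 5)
Your part (a) is exactly the paper's argument: write $M=AB$ with $A,B$ symmetric (Taussky/Frobenius), then $P=\tfrac12 A$, $Q=B$ gives $PQ+P^TQ^T=2PQ=M$. No issues there.

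For part (b) you have correctly identified the strategy the paper uses — show the image lies in $\sl_n$ via the trace identity, then prove dominance of $F\colon\gl_n\times\gl_n\to\sl_n$ (the paper works with the isomorphic quotient $\gl_n/\span(E_{n,n})$, but that is only a cosmetic difference) by exhibiting one point where the differential surjects, and organize the surjectivity check via the graph-reduction lemma. Your $n=2$ computation at $(E_{11},E_{21})$ is correct. However, the general-$n$ step, which is the entire content of the lemma, is left as a sketch, and the sketch as written has a real gap. You propose reading off the diagonal of $PQ+P^TQ^T$ as $\ell_{K_n}$ applied to the edge-weights $x_{\{a,i\}}=P_{ai}Q_{ia}+P_{ia}Q_{ai}$ and then ``adjusting'' off-diagonal entries using the diagonal entries of $P,Q$; but this reasons about the image of $F$ rather than the \emph{linear} map $dF_{(P_0,Q_0)}$, the edge-weights $x_{\{a,i\}}$ are not independent coordinates on the source (they are quadratic and entangled with the off-diagonal target entries), and the claim that $P_{ii},Q_{jj}$ freely steer the off-diagonal output is unsubstantiated. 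The paper instead makes a concrete, non-obvious choice of base point — $R$ the nilpotent Jordan block and $S$ the anti-diagonal all-ones matrix — computes $dF_{(R,S)}$ explicitly on basis vectors $(E_{i,j},0)$ and $(0,E_{k,\ell})$, identifies which basis vectors map to zero (precisely because $\cha K=2$), and then verifies that the resulting multigraph $\Gamma$ (vertices the basis of $\gl_n/\span(E_{n,n})$, edges the remaining input basis vectors) reduces to the empty graph by the three reduction rules. That explicit base point and the verification that $\Gamma$ reduces are exactly the missing pieces in your proposal; without them, the differential-surjectivity claim for general $n$ is unproved.
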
 
\begin{proof}
In part (a) we can even take $P$ and $Q$ to be symmetric, because by \cite[(ii)]{taussky} every matrix is a product of two symmetric matrices. For part (b), suppose that $\cha(K)=2$ and let $n\in\NN$ be an integer. Then $PQ+P^TQ^T\in\sl_n$ for all $P,Q\in\gl_n$. Note that $\{PQ+P^TQ^T\mid P,Q\in\gl_n\}$ is dense in $\sl_n$ if and only if the morphism
\begin{eqnarray*}
\varphi\colon \gl_n\times\gl_n&\rightarrow&\gl_n/\span(E_{n,n})\\
(P,Q)&\mapsto& PQ+P^TQ^T\mod E_{n,n}
\end{eqnarray*}
is dominant. To show that $\varphi$ is dominant, it suffices to show that its derivative 
\begin{eqnarray*}
\mathrm{d}_{(R,S)}\varphi\colon \gl_n\oplus\gl_n&\rightarrow&\gl_n/\span(E_{n,n})\\
(P,Q)&\mapsto& PS + P^TS^T + RQ + R^TQ^T \mod E_{n,n}
\end{eqnarray*}
at the point
$$
(R,S)=\left(\begin{pmatrix}0&1&&\\&\ddots&\ddots&\\&&\ddots&1\\&&&0\end{pmatrix},\begin{pmatrix}&&&&1\\&&&\iddots\\&&\iddots&\\&\iddots&&\\1&&&\end{pmatrix}\right)
$$
is surjective. Note that
\begin{eqnarray*}
(\mathrm{d}_{(R,S)}\varphi)(E_{i,j},0)&=&E_{i,n+1-j}+E_{j,n+1-i}\\
(\mathrm{d}_{(R,S)}\varphi)(0,E_{k,\ell})&=&(1-\delta_{k1})E_{k-1,\ell}+(1-\delta_{\ell n})E_{\ell+1,k}
\end{eqnarray*}
and hence $(\mathrm{d}_{(R,S)}\varphi)(0,E_{1,n})=0$ and $(\mathrm{d}_{(R,S)}\varphi)(E_{i,i},0)=0$ for all $i\in[n]$, because $\cha(K)=2$. The other basis elements of $\gl_n\oplus\gl_n$ all get sent to a sum of one or two basis elements of $\gl_n/\span(E_{n,n})$. To prove that $\mathrm{d}_{(R,S)}\varphi$ is surjective, it suffices by the previous lemma to prove that the restriction of $\mathrm{d}_{(R,S)}\varphi$ to the span of these other basis vectors equals $\ell_\Gamma$ for some multigraph $\Gamma$ that reduces to the empty graph.\bigskip

Define the multigraph $\Gamma$ as follows: We let $V(\Gamma)$ be the basis $\{E_{i,j}\mid (i,j)\neq(n,n)\}$ of $\gl_n/\span(E_{n,n})$ and we let $E(\Gamma)$ be the set
$$
\{(E_{i,j},0)\mid i\neq j\}\cup\{(0,E_{k,\ell})\mid k,\ell\in[n]\}\setminus\{(0,E_{1,n})\}
$$
of basis element of $\gl_n\oplus\gl_n$ that are not mapped to $0$. This allows to define the set of endpoints of an edge in such a way that $(\mathrm{d}_{(R,S)}\varphi)|_{\span(E(\Gamma))}=\ell_{\Gamma}$. Next we check that $\Gamma$ reduces to the empty graph. One can check that $\Gamma$ has two loops at $E_{1,1}$, a loop at $E_{k,1}$ for all $k>1$ and a loop at $E_{\ell,n}$ for all $\ell<n$. We also have:
\begin{itemize}
\item[(x)] edges with endpoints $E_{i,j}$ and $E_{j+1,i+1}$ for all $i,j\in[n-1]$;
\item[(y)] edges with endpoints $E_{k,1}$ and $E_{n,n+1-k}$ for all $1<k<n$; and
\item[(z)] edges with endpoints $E_{\ell,n}$ and $E_{1,n+1-\ell}$ for $1<\ell<n$.
\end{itemize}
First, we remove all other edges from $\Gamma$ using reduction rule (1). Next, we replace the edges (y) and (z) by loops at $E_{n,k}$ for $1<k<n$ and $E_{1,\ell}$ for $1<\ell<n$ using reduction rule (3). The graph $\Gamma'$ obtained this way has has the edges (x) together with loops at $E_{1,1}$ and $E_{1,i},E_{n,i},E_{i,1},E_{i,n}$ for $1<i<n$. Now consider the connected components of $\Gamma'$. One connected component consists of a path from $E_{1,1}$ to $E_{n,n}$ with a loop at $E_{1,1}$. All other components are path with loops at both ends starting at a vertex of the form $E_{1,i}$ or $E_{i,1}$ and ending at a vertex of the form $E_{n,i}$ or $E_{i,n}$. Each of these components reduces to the empty graph by repeatedly using reduction rules (2) and (3). Therefore $\Gamma'$ and $\Gamma$ also reduce to the empty graph. Hence $\mathrm{d}_{(R,S)}\varphi$ is surjective and $\varphi$ is dominant.
\end{proof} 

Since the polynomial $f$ is non-zero, so is $f_d$. By combining the lemma with the fact that $f_d(P+\lambda I_m)=f_d(P)$ for all $P\in\gl_m$ and $\lambda\in K$, we see that the polynomial $f_d(\Lambda R_{11}+\Lambda^T R_{11}^T)$ is non-zero. Now view $f_d(\Lambda R_{11}+\Lambda^T R_{11}^T)$ as a polynomial in~$\Lambda$ whose coefficients are polynomials in the entries of $R_{11}$. Any of its non-zero coefficients is a non-zero off-diagonal polynomial on $\gl_n$ which is contained in the span of the orbit of $g$. Here we use that $m\leq(n-1)/2$ since $l+r>2$. So (*) holds. So we can apply Corollary \ref{cor_done} and this proves Theorem \ref{thm_mainsl} in case (3a).

\subsubsection*{Case (3b): $\beta=\infty$, $\gamma<\infty$, $\cha(K)=2$ and $2\mid n_i$ for all $i\gg0$}

Note that in this case the trace map on $V$ is non-zero. By restricting to an infinite subsequence we may assume that $r_i>0$, $l_i+r_i>2$, $z_i=0$ and $2\mid n_i$ for all $i\in\NN$. Let $\mu\in K$, suppose that $X\subsetneq Y_\mu$ and let $i\in\NN$ be such that $I(Y_{\mu,i})\subsetneq I(X_i)$. Let $f\in I(X_i)\setminus I(Y_{\mu,i})$ be a polynomial of minimal degree. Take $l=l_i$, $r=r_i$, $m=n_i$ and $n=n_{i+1}=(l+r)n$. To prove that the conditions of Corollary \ref{cor_done} are satisfied, we need to check the following condition:
\begin{itemize}
\item[(*)] The span of the $\SL_n$-orbit of the polynomial 
$$
g:=f(P_{11}+\dots+P_{ll}-Q_{11}^{T}-\dots-Q_{rr}^{T})
$$
contains a non-zero off-diagonal polynomial.
\end{itemize}
As in case (3a), we find that all coefficients of $f_d(\Lambda R_{11}+\Lambda^T R_{11}^T)$ are off-diagonal polynomials on $\gl_n$ which are contained in the span of the orbit of $g$. So it suffices to prove that $f_d(\Lambda R_{11}+\Lambda^T R_{11}^T)$ is not the zero polynomial.\bigskip

Suppose that the polynomial $f_d(\Lambda R_{11}+\Lambda^T R_{11}^T)$ is the zero polynomial. Then $f_d(P)=0$ for all $P\in\sl_m$ by Lemma \ref{lm_char2}(b). So $f_d$ is a multiple of the trace function on~$\gl_m$ and we can write $f_d=\tr\cdot h$ for some $h$. But then $f-(\tr-\mu)h\in I(X_i)\setminus I(Y_{\mu,i})$. This contradicts the minimality of the degree of $f$. So $f_d(\Lambda R_{11}+\Lambda^T R_{11}^T)$ can not be the zero polynomial. So (*) again holds. So we can apply Corollary \ref{cor_done} and this proves Theorem \ref{thm_mainsl} in case (3b).

\subsubsection*{Case (4a): $\beta+\gamma<\infty$ and $\cha(K)\nmid n_i$ for all $i\gg0$}

We do not have $\cha(K)\nmid n_i$ for all $i\gg0$. So we get $Y_0=V$ and $Y_\mu=\emptyset$ for all $\mu\in K\setminus\{0\}$. By restricting to an infinite subsequence we may assume that $l_i>2$, $r_i=z_i=0$ and $\cha(K)\nmid n_i$ for all $i\in\NN$. Let $i\in\NN$ be such that $I(X_i)\neq0$ and let $f\in I(X_i)$ be a non-zero polynomial of minimal degree. Take $l=l_i$, $m=n_i$ and $n=n_{i+1}=lm$. Then $m\leq (n-1)/2$. To prove that the conditions of Corollary \ref{cor_done} are satisfied, we need to check the following condition:
\begin{itemize}
\item[(*)] The span of the $\SL_n$-orbit of the polynomial 
$$
g:=f(P_{11}+\dots+P_{ll})
$$
contains a non-zero off-diagonal polynomial.
\end{itemize}
Consider the matrix
$$
H=\begin{pmatrix}P_{11}&\dots&P_{1l}\\\vdots&&\vdots\\P_{l1}&\dots&P_{ll}\end{pmatrix}
$$
where $P_{k,\ell}\in\gl_m$. Also consider the matrix
$$
A(\Lambda)= \begin{pmatrix}I_m&\Lambda\\&I_m\\&&I_m&&\\&&&\ddots\\&&&&I_m\end{pmatrix}
$$
for $\Lambda\in\gl_m$. For all $\Lambda\in\gl_m$, we have
$$
A(\Lambda)HA(\Lambda)^{-1} = \begin{pmatrix}P_{11}'&\dots&P_{1l}'\\\vdots&&\vdots\\P_{l1}'&\dots&P_{ll}'\end{pmatrix}
$$
where $P_{11}'=P_{11}+\Lambda P_{21}$, $P_{22}'=P_{22}-P_{21}\Lambda$ and $P_{jj}'=P_{jj}$ for $j\in\{3,\dots,l\}$. This means that if we let $A(\Lambda)$ act on $g$, we obtain the polynomial $h(\Lambda)=f(P_{11}+\dots+P_{ll}+[\Lambda,P_{21}])$ where $[-,-]$ is the commutator bracket. Let $d$ be the degree of $f$ and let $f_d=f_d(P)$ be the homogeneous part of $f$ of degree $d$. Then we see that the homogeneous part of $h(\Lambda)$ of degree $d$ in the coordinates of $\Lambda$ equals $f_d([\Lambda,P_{21}])$. Since $f$ is non-zero, so is $f_d$. By \cite[Theorem 6.3]{stasinski}, we know that every element of $\gl_m$ is of the form $[X,Y]+\lambda I_m$ for some $X,Y\in\gl_m$ and $\lambda\in K$. So since $f_d(P+\lambda I_m)=f_d(P)$ for all $P\in\gl_m$ and $\lambda\in K$, we see that $f_d([\Lambda,P_{21}])$ is not the zero polynomial. Any non-zero coefficient of $f_d([\Lambda,P_{21}])$ as a polynomial in $\Lambda$ satisfies (*). So we can apply Corollary \ref{cor_done} and this proves Theorem \ref{thm_mainsl} in case (4a).

\subsubsection*{Case (4b): $\beta+\gamma<\infty$ and $\cha(K)\mid n_i$ for all $i\gg0$}

Note that in this case the trace map on $V$ is non-zero. By restricting to an infinite subsequence we may assume that $l_i>2$, $r_i=z_i=0$ and $\cha(K)\mid n_i$ for all $i\in\NN$. We now proceed as in the case (4a) with the same modifications that were established in case (3b). 

\section{Limits of classical groups of type C}

From now on, we assume that $\cha(K)\neq2$. In this section, we let $G$ be the direct limit of a sequence 
$$\begin{tikzcd}
\Sp_{2n_1} \arrow[hook,"\iota_1"]{r} & \Sp_{2n_2} \arrow[hook,"\iota_2"]{r} & \Sp_{2n_3} \arrow[hook,"\iota_3"]{r} & \dots
\end{tikzcd}$$
of diagonal embeddings given by
\begin{eqnarray*}
\iota_i\colon \Sp_{2n_i}&\hookrightarrow&\Sp_{2n_{i+1}}\\
\begin{pmatrix}A&B\\C&D\end{pmatrix}&\mapsto&\begin{pmatrix}\Diag(A,\dots,A,I_{z_i})&\Diag(B,\dots,B,0)\\\Diag(C,\dots,C,0)&\Diag(D,\dots,D,I_{z_i})\end{pmatrix}
\end{eqnarray*}
with $l_i$ blocks $A,B,C,D\in\gl_{n_i}$ for some $l_i\in\NN$ and $z_i\in\ZZ_{\geq0}$. We let $V$ be the inverse limit of the sequence
$$\begin{tikzcd}
\sp_{2n_1}& \arrow[two heads]{l} \sp_{2n_2} & \arrow[two heads]{l} \sp_{2n_3} & \arrow[two heads]{l} \dots
\end{tikzcd}$$
where the maps are given by
\begin{eqnarray*}
\sp_{2n_{i+1}}&\twoheadrightarrow&\sp_{2n_i}\\
\begin{pmatrix}P_{11}&\dots&P_{1l_i}&\bullet&Q_{11}&\dots&Q_{1l_i}&\bullet\\\vdots&&\vdots&\vdots&\vdots&&\vdots&\vdots\\P_{l_i1}&\dots&P_{l_il_i}&\vdots&Q_{l_i1}&\dots&Q_{l_il_i}&\vdots\\\bullet&\dots&\dots&\bullet&\bullet&\dots&\dots&\bullet\\R_{11}&\dots&R_{1l_i}&\bullet&S_{11}&\dots&S_{1l_i}&\bullet\\\vdots&&\vdots&\vdots&\vdots&&\vdots&\vdots\\R_{l_i1}&\dots&R_{l_il_i}&\vdots&S_{l_i1}&\dots&S_{l_il_i}&\vdots\\\bullet&\dots&\dots&\bullet&\bullet&\dots&\dots&\bullet\end{pmatrix}&\mapsto&\begin{pmatrix}\sum_{k=1}^{l_i}P_{kk}&\sum_{k=1}^{l_i}Q_{kk}\\\sum_{k=1}^{l_i}R_{kk}&\sum_{k=1}^{l_i}S_{kk}\end{pmatrix}
\end{eqnarray*}
with $P_{k\ell}=-S_{\ell k}^T,Q_{k\ell},R_{k\ell}\in\gl_{n_i}$ such that $Q_{k\ell}=Q_{\ell k}^T$ and $R_{k\ell}=R_{\ell k}^T$.

\begin{thm}\label{thm_mainsp}
The space $V$ is $G$-Noetherian.
\end{thm}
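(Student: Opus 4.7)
The plan is to follow the framework laid out in Section 2 and adapt the arguments used for cases $(2)$--$(4\mathrm{b})$ of the type A proof. After the reductions of Section 2.1, I may assume the sequence consists of standard diagonal embeddings with signatures $(l_i,z_i)$, and by passing to an infinite subsequence (composing consecutive embeddings as needed) I may assume $l_i\geq 2$ for all $i$ or $z_i\geq 1$ for all $i$, since otherwise $V$ would be finite-dimensional. Since every element of $\sp_{2n}$ is traceless, no non-trivial trace-type $G$-invariant on $V$ arises, and Noetherianity reduces to showing that every proper $G$-stable closed subset $X\subsetneq V$ satisfies $X\subseteq\{0\}$.

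I would then prove two auxiliary lemmas. The first is a symplectic analog of Lemma \ref{lm_lowranklowtuplerank}: if $Y$ is an $\Sp_{2n}$-stable closed subset of $\sp_{2n}$ whose ideal contains a non-zero polynomial $f$ in only the $r$-variables, then the bottom-left block $R$ of every element of $Y$ has rank strictly less than $\deg(f)$. The proof uses the Levi $\GL_n\hookrightarrow\Sp_{2n}$, $g\mapsto\Diag(g,g^{-T})$, under which $R$ transforms by congruence $R\mapsto g^{-T}Rg^{-1}$; since over $\overline K$ with $\cha K\neq 2$ symmetric matrices of equal rank are congruent, the closure of the projection of $Y$ to the $R$-block must lie in a rank locus of rank less than $\deg f$. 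The second lemma mirrors Lemma \ref{lm_boundedtuplerankimpliesrk=0}: if $X$ is a $G$-stable closed subset of $V$ whose $R$-blocks at level $i$ have rank uniformly bounded for all $i\gg 0$, then $X\subseteq\{0\}$. This is proved by induction on the bound, using an analog of Lemma \ref{lm_higherrank} for symmetric matrices under congruence: after a generic Levi conjugation by $\Diag(g_1,\ldots,g_{l_i},I_{z_i},g_1^{-T},\ldots,g_{l_i}^{-T},I_{z_i})$, the diagonal subblocks $R_{jj}$ can be arranged to have rank equal to the bound and to sum to a symmetric matrix of strictly greater rank.

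The central step is to extract the off-diagonal polynomial. Take $f\in I(X_i)\setminus\{0\}$ of minimal degree, lift to $g=\pr_i^*(f)\in I(X_{i+1})$, and act by the symplectic upper-unipotent $A(\Lambda)=\bigl(\begin{smallmatrix}I&\Lambda\\0&I\end{smallmatrix}\bigr)\in\Sp_{2n_{i+1}}$ with $\Lambda$ a symmetric parameter matrix. A direct calculation yields the transformations $\tilde P\mapsto\tilde P\pm\widetilde{\Lambda R}$, $\tilde Q\mapsto\tilde Q\pm(\widetilde{P\Lambda}+\widetilde{\Lambda P^T})\mp\widetilde{\Lambda R\Lambda}$, and $\tilde R$ fixed, where tildes denote summation over the $l_i$ diagonal subblocks. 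With respect to the grading $\grad(p)=1$, $\grad(q)=2$, $\grad(r)=0$ and writing $f_d$ for the $\grad$-top part of $f$, the top-$\Lambda$-degree part of $A(\Lambda)\cdot g$ equals $f_d(\pm\widetilde{\Lambda R},\mp\widetilde{\Lambda R\Lambda},\tilde R)$, a polynomial purely in the $r$-variables of $\sp_{2n_{i+1}}$; by the infinite-field coefficient extraction argument of Section 2.3, every $\Lambda$-monomial coefficient lies in $I(X_{i+1})$. The main obstacle is the non-vanishing of this substituted polynomial, which I would establish via a symplectic analog of Lemma \ref{lm_char2}, showing that the morphism $(\Lambda,R)\mapsto(\widetilde{\Lambda R},\widetilde{\Lambda R\Lambda})$ from pairs of symmetric matrices into $\gl_{n_i}\times\Sym_{n_i}$ is dominant; the first coordinate is surjective by Taussky's theorem, and dominance of the joint map follows from a tangent-space computation at a judicious base point, analogous to the graph-reduction argument in the proof of Lemma \ref{lm_char2}(b). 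Once the non-vanishing is in hand, combining the off-diagonal polynomial with the two auxiliary lemmas forces the $R$-blocks of $X_j$ to have uniformly bounded rank for $j\gg 0$, hence $X\subseteq\{0\}$ and $V$ is $G$-Noetherian.
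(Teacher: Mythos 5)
Your proposal mirrors the architecture of the paper's proof: extract a non-zero polynomial in only the $r$-variables, deduce a rank bound on the $R$-block, propagate that bound to zero, and conclude $X\subseteq\{0\}$. Two places, however, diverge from the paper in ways that matter.

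First, the non-vanishing step. You propose to act by a unipotent $A(\Lambda)=\bigl(\begin{smallmatrix}I&\Lambda\\0&I\end{smallmatrix}\bigr)$ with $\Lambda$ a general symmetric parameter and then prove dominance of the morphism $(\Lambda,R)\mapsto(\widetilde{\Lambda R},\widetilde{\Lambda R\Lambda},\tilde R)$. Be careful here: if $\Lambda$ and $R$ are taken to be single $n_i\times n_i$ symmetric blocks, the image satisfies the hidden relation $QR+P^2=0$ (indeed $\Lambda R\Lambda\cdot R=(\Lambda R)^2$), and the map is not dominant; so the naive version of your morphism is the wrong one. Dominance does hold if $\Lambda$ ranges over full $n_{i+1}\times n_{i+1}$ symmetric matrices with the off-diagonal blocks genuinely in play; for instance, setting $\Lambda_{12}=\Lambda_{21}=\Lambda_0$ one gets $\widetilde{\Lambda R}=\Lambda_0 S$, $\widetilde{\Lambda R\Lambda}=\Lambda_0 S'\Lambda_0$, $\tilde R=R''$ with $S,S',R''$ independent (here $l_i\geq 3$ is used), and given $P$ invertible, Taussky's theorem produces $\Lambda_0$ and $S$, after which $S'=\Lambda_0^{-1}Q\Lambda_0^{-1}$ is forced. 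In particular the tangent-space computation you propose is unnecessary. The paper avoids the dominance question entirely: Lemma~\ref{lm_toppartnotinsp} shows one may choose $f$ so that its top-graded part does not lie in the ideal of $Z=\{P=P^T\}$ (because $\GL_n\cdot Z$ is dense in $\sp_{2n}$), and Lemma~\ref{lm_movingeqsp2} then conjugates by the one-parameter unipotent with $\Lambda_{12}=\Lambda_{21}=\lambda I_m$, which lands the substitution inside $Z$, where non-vanishing is a direct linear-algebra check using $l\geq 3$. This is shorter and avoids any algebraic-geometry input beyond density of a $\GL_n$-orbit.

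Second, a genuine gap: you do not handle the case $\#\{i:l_i>1\}<\infty$, i.e.\ signatures $(1,z_i)$ with $z_i\geq 1$ for all large $i$. In that case there is no propagation step and $X$ need not be $\{0\}$ --- closed subsets of bounded-rank type survive. The paper dispatches this via \cite[Theorem~1.2]{eggermont-snowden} (and sketches an elementary $\Sym(\NN)$-Noetherianity argument in the remark following Theorem~\ref{thm_mainsp}), and only then restricts to a subsequence with $l_i\geq 3$. Your stated reduction ``$l_i\geq 2$ for all $i$ or $z_i\geq 1$ for all $i$'' both fails to cover all infinite-dimensional limits and, in the second branch, leaves the proof incomplete.

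Finally, you propagate rank on the $R$-block under congruence, while the paper first upgrades the bound on $\rk(R)$ to a bound on $\rk(P)$ (Lemma~\ref{lm_boundedranksp}) and then propagates on the $P$-block under similarity using Lemma~\ref{lm_higherrank} directly. Your route is viable in principle, but requires congruence analogues of Lemma~\ref{lm_topleftinv} and Lemma~\ref{lm_higherrank} (valid over $\overline{K}$ with $\cha K\neq 2$), and once $R=0$ you must still invoke $\Sp_{2n}$-invariance to kill $Q$ and then $P$; the paper's final lemma does exactly this. Stating these details would be necessary to close the argument.
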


Let $X\subsetneq V$ be a $G$-stable closed subset. Let $X_i$ be the closure of the projection of $X$ to $\sp_{2n_i}$ and let $I(X_i)\subseteq K[\sp_{2n_i}]$ be the ideal of $X_i$. If $\#\{i\mid l_i>1\}<\infty$, then Theorem \ref{thm_mainsp} follows from \cite[Theorem 1.2]{eggermont-snowden}.

\begin{re}
Let $X\subsetneq V$ be a $G$-stable closed subset in the case where $\#\{i\mid l_i>1\}<\infty$. Then $V$ can be identified with a subspace of the space of $\NN\times\NN$ matrices and we can prove (using technique similar to the ones used in this paper) that $X$ consists of matrices of bounded rank. The $G$-Noetherianity of $V$ then follows from the $\Sym(\NN)$-Noetherianity of $K^{\NN\times k}$ for $k\in\NN$. Important to note here is that, for every $n\in\NN$, the group $\Sp_{2n}$ contains all matrices corresponding to permutations $\pi\in S_{2n}$ such that $\pi(i+n)=\pi(i)+n$ for all $i\in[n]$. This allows us to define an action of $\Sym(\NN)$ on $V$, up to which the closed subset $X$ is Noetherian. Similar statements hold for sequences of types $B$ and $D$.
\end{re}

We assume that $\#\{i\mid l_i>1\}=\infty$. By restricting to an infinite subsequence, we may assume that $l_i\geq 3$ for all $i\in\NN$.

\begin{lm}\label{lm_toppartnotinsp}
Let $n\in\NN$, let $Y\subsetneq\sp_{2n}$ be an $\Sp_{2n}$-stable closed subset and let $Z$ be the closed subset
$$
\left\{\begin{pmatrix}P&Q\\R&-P^T\end{pmatrix}\in\sp_{2n}~\middle|~P=P^T\right\}
$$
of $\sp_{2n}$. Then there is a non-zero polynomial $f\in I(Y)$ whose top-graded part is not contained in the ideal of $Z$.
\end{lm}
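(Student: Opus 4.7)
The plan is to construct $f$ from the ring of $\Sp_{2n}$-invariants on $\sp_{2n}$. For $M \in \sp_{2n}$, the defining relation $M^T J + JM = 0$ shows that $M$ is similar to $-M^T$, so the characteristic polynomial $\det(xI_{2n} - M) = \det(xI_{2n} + M)$ is an even polynomial in $x$; write it as $x^{2n} + b_1(M)\,x^{2n-2} + \cdots + b_n(M)$. The resulting polynomials $b_1, \ldots, b_n \in K[\sp_{2n}]$ are $\Sp_{2n}$-invariant and algebraically independent over $K$, and I will exploit the map $\chi := (b_1, \ldots, b_n) \colon \sp_{2n} \to K^n$.

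First I check that each $b_k$ is homogeneous of grade $2k$. Indeed, $b_k$ is (up to sign) the sum of $2k \times 2k$ principal minors of $M$, and each such minor expands as a signed sum of products $\prod_{i \in S} m_{i, \sigma(i)}$ over permutations $\sigma$ of a subset $S \subseteq [2n]$ of size $2k$. Labeling $i \in [2n]$ by $\epsilon_i = 0$ if $i \leq n$ and $\epsilon_i = 1$ if $i > n$, the block structure of $\sp_{2n}$ yields $\grad(m_{i,j}) = 1 + \epsilon_j - \epsilon_i$, and summing this around each cycle of $\sigma$ telescopes to the cycle length, so the full monomial has grade $|S| = 2k$. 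Next I verify that $\chi(Y) \subsetneq K^n$: if $\chi(Y)$ were dense, then for generic $c \in K^n$ the fibre $\chi^{-1}(c)$ is a single regular semisimple $\Sp_{2n}$-orbit that meets $Y$; by $\Sp_{2n}$-invariance this whole orbit lies in $Y$, so $Y$ would contain a dense open subset of $\sp_{2n}$, contradicting $Y \subsetneq \sp_{2n}$.

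Now pick a non-zero $h \in K[x_1, \ldots, x_n]$ vanishing on $\chi(Y)$ and set $f := h(b_1, \ldots, b_n)$. Then $f \in I(Y)$ and $f \neq 0$ by the algebraic independence of the $b_k$. Grading $K[x_1, \ldots, x_n]$ by $\grad(x_k) := 2k$ makes the substitution $x_k \mapsto b_k$ grade-preserving, so the top-graded part of $f$ is $h_{\mathrm{top}}(b_1, \ldots, b_n)$, where $h_{\mathrm{top}} \neq 0$ is the top-graded part of $h$. To finish, it suffices to show that $b_1|_Z, \ldots, b_n|_Z$ are algebraically independent, which I would do by restricting further to $Z' := \{\Diag(D, -D) : D \in \gl_n \text{ diagonal}\} \subseteq Z$: for $M = \Diag(D, -D)$ the characteristic polynomial is $\prod_{i=1}^n (x^2 - d_i^2)$, so $b_k|_{Z'}$ is $(-1)^k$ times the $k$-th elementary symmetric polynomial in $d_1^2, \ldots, d_n^2$, and these are algebraically independent in $K[d_1, \ldots, d_n]$. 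Thus $h_{\mathrm{top}}(b_1|_Z, \ldots, b_n|_Z) \neq 0$, proving that the top-graded part of $f$ is not in $I(Z)$. The main delicacy of the plan is the density step $\chi(Y) \subsetneq K^n$, which rests on the standard (but non-trivial) facts that the regular semisimple locus is open and dense in $\sp_{2n}$ and has single $\Sp_{2n}$-orbits as its $\chi$-fibres.
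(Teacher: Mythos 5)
Your proof is correct over an algebraically closed field and takes a genuinely different route from the paper's. Whereas the paper starts from an \emph{arbitrary} non-zero $f\in I(Y)$ and moves its top-graded part around with the grading-preserving $\GL_n$-action $A\mapsto\Diag(A,A^{-T})$, concluding from the density of $\GL_n\cdot Z$ in $\sp_{2n}$, you instead \emph{construct} $f$ from the invariant-theoretic quotient $\chi=(b_1,\dots,b_n)$. Your homogeneity computation (each $b_k$ has grade $2k$, via $\grad(m_{ij})=1+\epsilon_j-\epsilon_i$), the check that $b_1|_Z,\dots,b_n|_Z$ are algebraically independent by restricting to $\Diag(D,-D)$, and the grade-preserving substitution $x_k\mapsto b_k$ are all correct. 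What your approach buys is an explicitly $\Sp_{2n}$-invariant witness $f$; what it costs is the reliance on the Chevalley--Kostant picture of the adjoint quotient (regular semisimple locus dense, generic fibres single orbits), where the paper's argument uses only that diagonalizable matrices are dense in $\gl_n$.

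There is, however, a genuine gap in the density step you flag. The claim that generic $\chi$-fibres are single $\Sp_{2n}(K)$-orbits is only true over $\overline{K}$. Already for $n=1$ and $K=\mathbb{R}$, the fibre of $\chi=\det$ over $c>0$ consists of the $M\in\sl_2(\mathbb{R})$ with eigenvalues $\pm i\sqrt{c}$, and this set has two connected components which are two distinct $\SL_2(\mathbb{R})$-orbits. Since the lemma is stated (and proved in the paper) over an arbitrary infinite $K$ with $\cha K\neq 2$, as written your argument only covers the algebraically closed case. The gap is easy to close without the single-orbit fact, by a dimension count: if $\chi(Y)$ were dense, pick an irreducible component $Y_0$ of $Y$ on which $\chi$ is dominant; then $\overline{\Sp_{2n}\cdot Y_0}\subseteq Y$ is irreducible and $\Sp_{2n}$-stable, $\chi$ remains dominant on it, and for generic $c$ its fibre over $c$ contains a regular semisimple $\Sp_{2n}$-orbit of dimension $2n^2$, so $\dim\overline{\Sp_{2n}\cdot Y_0}\geq n+2n^2=\dim\sp_{2n}$, forcing $Y=\sp_{2n}$. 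With this replacement your argument goes through over any infinite $K$ with $\cha K\neq 2$.
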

\begin{proof}
Since $Y\subsetneq\sp_{2n}$, there is a non-zero polynomial $f\in I(Y)$. Since $f$ is non-zero, so is its top-graded part $g$. Let the group $\GL_n$ act on $\sp_{2n}$ via the diagonal embedding $\GL_n\hookrightarrow\Sp_{2n},A\mapsto\Diag(A,A^{-T})$. Then we get a action of $\GL_n$ on $K[\sp_{2n}]$. Note that this action respects the grading on $K[\sp_{2n}]$ and that the ideal $I(Y)$ is $\GL_n$-stable. So for all $A\in\GL_n$ we have $A\cdot f\in I(Y)$ and the top-graded part of this polynomial is $A\cdot g$. Hence it suffices to prove that $A\cdot g\not\in I(Z)$ for some $A\in\GL_n$. Note that
\begin{eqnarray*}
\GL_n\cdot Z&=&\left\{A\cdot \begin{pmatrix}P&Q\\R&-P^T\end{pmatrix}~\middle|~\begin{array}{c}P=P^T,A\in\GL_n\\Q=Q^T,R=R^T\end{array} \right\}\\
&=&\left\{\begin{pmatrix}APA^{-1}&AQA^T\\A^{-T}RA^{-1}&-A^{-T}P^TA^T\end{pmatrix}~\middle|~\begin{array}{c}P=P^T,A\in\GL_n\\Q=Q^T,R=R^T\end{array} \right\}\\
&=&\left\{\begin{pmatrix}APA^{-1}&Q\\R&-(APA^{-1})^T\end{pmatrix}~\middle|~\begin{array}{c}P=P^T,A\in\GL_n\\Q=Q^T,R=R^T\end{array} \right\}
\end{eqnarray*}
and that $\{APA^{-1}\mid P=P^T,A\in\GL_n\}$ is dense in $\gl_n$ since $K$ is infinite and diagonal matrices are symmetric. So $\GL_n\cdot Z$ is dense in $\sp_{2n}$. So since the polynomial $g$ is non-zero, there must be an $A\in\GL_n$ such that $A\cdot g\not\in I(Z)$.
\end{proof}

\begin{lm}\label{lm_movingeqsp2}
Let $i\in\NN$ and let $f=f(P,Q,R)\in I(X_i)$ be a non-zero polynomial whose top-graded part $g$ is not contained in the ideal of
$$
\left\{\begin{pmatrix}P&Q\\R&-P^T\end{pmatrix}\in\sp_{2n_i}~\middle|~P=P^T\right\}.
$$
Then $I(X_{i+1})\cap K[r_{k\ell}|1\leq k,\ell\leq n_{i+1}]/(r_{k\ell}-r_{\ell k})$ contains a non-zero polynomial with degree at most $\deg(f)$.
\end{lm}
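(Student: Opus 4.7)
The plan is to pull $f$ back to $\sp_{2n_{i+1}}$ along the projection, act by a family of symplectic transvections parametrised by an arbitrary $n_i\times n_i$ matrix $\Lambda_0$, and then extract a non-zero coefficient from the highest power of $\Lambda_0$; this coefficient will be a polynomial in the $r$-variables alone.

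First, the projection $\pr_i\colon \sp_{2n_{i+1}}\twoheadrightarrow\sp_{2n_i}$ is linear and preserves the grading $\grad$, so the pull-back
\[
h := f\!\left(\sum_{k=1}^{l_i} P_{kk},\ \sum_{k=1}^{l_i} Q_{kk},\ \sum_{k=1}^{l_i} R_{kk}\right)
\]
lies in $I(X_{i+1})$, has degree $d := \deg f$, and its top-$\grad$ part $\tilde g$ is obtained from $g$ by the same substitution. For any symmetric $\Lambda\in\gl_{n_{i+1}}$ the matrix $A(\Lambda) = \bigl(\begin{smallmatrix}I & \Lambda\\ 0 & I\end{smallmatrix}\bigr)$ lies in $\Sp_{2n_{i+1}}$, and a direct computation shows that conjugation by $A(\Lambda)$ sends $P\mapsto P-\Lambda R$, $Q\mapsto Q+P\Lambda+\Lambda P^T-\Lambda R\Lambda$ and leaves $R$ unchanged. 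Using $l_i\geq 3$, fix distinct indices $k_0,\ell_0\in[l_i]$ and let $\Lambda$ be the symmetric matrix whose $(k_0,\ell_0)$-block is a free matrix $\Lambda_0\in\gl_{n_i}$, whose $(\ell_0,k_0)$-block is $\Lambda_0^T$ and whose remaining blocks vanish. Then $A(\Lambda)\cdot h\in I(X_{i+1})$ depends polynomially on the entries of $\Lambda_0$, and because $K$ is infinite every $\Lambda_0$-monomial coefficient of $A(\Lambda)\cdot h$ again lies in $I(X_{i+1})$.

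Next I would identify the top-$\Lambda_0$-degree coefficient. A monomial of $\tilde g$ with $a$ $p$-factors, $b$ $q$-factors and $c$ $r$-factors has $\grad = a+2b$, and under the substitution above the maximum $\Lambda_0$-degree reachable from it equals $a+2b$, attained by choosing $-\Lambda R$ in each $p$ and $-\Lambda R\Lambda$ in each $q$. Since $\tilde g$ has $\grad = D$ and strictly lower-$\grad$ parts of $h$ contribute strictly lower $\Lambda_0$-degrees, the top-$\Lambda_0$-degree part of $A(\Lambda)\cdot h$ equals
\[
\tilde g\!\left(-\sum_k(\Lambda R)_{kk},\ -\sum_k(\Lambda R\Lambda)_{kk},\ \sum_k R_{kk}\right),
\]
a polynomial in $\Lambda_0$ and the $r$-variables alone. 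Consequently every $\Lambda_0$-monomial coefficient lies in $I(X_{i+1})\cap K[r_{k\ell}\mid 1\leq k,\ell\leq n_{i+1}]/(r_{k\ell}-r_{\ell k})$, and its degree in the $r$-variables is at most the total degree $a+b+c\leq d$ of the corresponding monomial of $\tilde g$.

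The main obstacle is establishing non-vanishing, which is where the hypothesis on $g$ and the assumption $l_i\geq 3$ come in. Writing the three arguments out explicitly using $R_{k\ell}=R_{\ell k}^T$ they become
\[
-\Lambda_0 R_{\ell_0 k_0}-(R_{\ell_0 k_0}\Lambda_0)^T,\qquad -\Lambda_0 R_{\ell_0\ell_0}\Lambda_0^T-\Lambda_0^T R_{k_0 k_0}\Lambda_0,\qquad \sum_k R_{kk}.
\]
To test the resulting polynomial in $\Lambda_0$ and $R$ for non-vanishing I would specialise $\Lambda_0=I_{n_i}$: the three arguments then reduce to $-R_{\ell_0 k_0}-R_{\ell_0 k_0}^T$, $-R_{k_0 k_0}-R_{\ell_0\ell_0}$ and $\sum_k R_{kk}$, each a symmetric $n_i\times n_i$ matrix. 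Varying $R_{\ell_0 k_0}\in\gl_{n_i}$ realises any symmetric first argument; varying $R_{k_0 k_0},R_{\ell_0\ell_0}\in\Sym(n_i)$ realises any symmetric second argument; and thanks to $l_i\geq 3$ there is at least one further diagonal block $R_{m_0 m_0}$ whose free choice allows $\sum_k R_{kk}$ to be any symmetric matrix. Thus at $\Lambda_0=I_{n_i}$ the map $R\mapsto(\mathrm{arg}_1,\mathrm{arg}_2,\mathrm{arg}_3)$ surjects onto $\{P=P^T\}\cap\sp_{2n_i}$, and the hypothesis $g\notin I(\{P=P^T\})$ forces $\tilde g$ evaluated on the three arguments to be non-zero as a polynomial in $R$, hence non-zero as a polynomial in $\Lambda_0$ and $R$. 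Any non-zero $\Lambda_0$-coefficient is then the desired off-diagonal element of $I(X_{i+1})$.
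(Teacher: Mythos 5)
Your proposal is correct and follows essentially the same route as the paper's proof: pull $f$ back, conjugate by a symplectic transvection $A(\Lambda)=\bigl(\begin{smallmatrix}I&\Lambda\\0&I\end{smallmatrix}\bigr)$ with $\Lambda$ supported on one off-diagonal block pair, extract the top-degree coefficient in the transvection parameter, and verify nonvanishing using the hypothesis on $g$ together with $\cha(K)\neq2$ and $l_i\geq3$. The only cosmetic difference is that you let the block be a full matrix parameter $\Lambda_0$ before specialising to $\Lambda_0=I_{n_i}$, whereas the paper works directly with $\Lambda_0=\lambda I_{n_i}$, arriving (up to sign conventions and choice of indices) at the same off-diagonal polynomial $g(R_{21}+R_{12},-(R_{11}+R_{22}),\sum_k R_{kk})$.
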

\begin{proof}
Take $m=n_i$, $l=l_i$, $z=z_i$ and $n=n_{i+1}=lm+z$. Consider the matrix
$$
H=\begin{pmatrix}P_{11}&\dots&P_{1l}&\bullet&Q_{11}&\dots&Q_{1l}&\bullet\\\vdots&&\vdots&\vdots&\vdots&&\vdots&\vdots\\P_{l1}&\dots&P_{ll}&\vdots&Q_{l1}&\dots&Q_{ll}&\vdots\\\bullet&\dots&\dots&\bullet&\bullet&\dots&\dots&\bullet\\R_{11}&\dots&R_{1l}&\bullet&S_{11}&\dots&S_{1l}&\bullet\\\vdots&&\vdots&\vdots&\vdots&&\vdots&\vdots\\R_{l1}&\dots&R_{ll}&\vdots&S_{l1}&\dots&S_{ll}&\vdots\\\bullet&\dots&\dots&\bullet&\bullet&\dots&\dots&\bullet\end{pmatrix}\in\sp_{2n}
$$
and consider the matrix
$$
A(\lambda)=\begin{pmatrix}I_m&&&&\lambda I_m\\&I_m&&\lambda I_m\\&&I_{n-2m}\\&&&I_m\\&&&&I_m\\&&&&&I_{n-2m}\end{pmatrix}\in\Sp_{2n}
$$
for $\lambda\in K$. The polynomial $f=f(P,Q,R)\in I(X_i)$ pulls back to the element
$$
f\left(\sum_{k=1}^lP_{kk},\sum_{k=1}^lQ_{kk},\sum_{k=1}^lR_{kk}\right)
$$
of $I(X_{i+1})$. For $\lambda\in K$, we have
$$
A(\lambda)HA(\lambda)^{-1}=\begin{pmatrix}P_{11}'&\dots&P_{1l}'&\bullet&Q_{11}'&\dots&Q_{1l}'&\bullet\\\vdots&&\vdots&\vdots&\vdots&&\vdots&\vdots\\P_{l1}'&\dots&P_{ll}'&\vdots&Q_{l1}'&\dots&Q_{ll}'&\vdots\\\bullet&\dots&\dots&\bullet&\bullet&\dots&\dots&\bullet\\R_{11}&\dots&R_{1l}&\bullet&S_{11}'&\dots&S_{1l}'&\bullet\\\vdots&&\vdots&\vdots&\vdots&&\vdots&\vdots\\R_{l1}&\dots&R_{ll}&\vdots&S_{l1}'&\dots&S_{ll}'&\vdots\\\bullet&\dots&\dots&\bullet&\bullet&\dots&\dots&\bullet\end{pmatrix}
$$
where
\begin{eqnarray*}
P_{11}'&=& P_{11}+\lambda R_{21}\\
P_{22}'&=& P_{22}+\lambda R_{12}\\
P_{kk}'&=& P_{kk}\mbox{ for $k=3,\dots,l$}\\
Q_{11}'&=& Q_{11}+\lambda(S_{21}-P_{12})-\lambda^2R_{22}\\
Q_{22}'&=& Q_{22}+\lambda(S_{12}-P_{21})-\lambda^2R_{11}\\
Q_{kk}'&=& Q_{kk}\mbox{ for $k=3,\dots,l$}
\end{eqnarray*}
Let $g$ be the top-graded part of $f$. Then we see that $g(R_{21}+R_{12},-(R_{11}+R_{22}),\sum_{k=1}^lR_{kk})$ is contained in the span of 
$$
A(\lambda)\cdot f\left(\sum_{k=1}^lP_{kk},\sum_{k=1}^lQ_{kk},\sum_{k=1}^lR_{kk}\right)
$$
over all $\lambda\in K$. We have $g(P,Q,R)\neq0$ for some symmetric matrices $P,Q,R\in\gl_m$. Since $\cha(K)\neq 2$, there are matrices $R_{12},R_{21}$ such that $R_{12}=R_{21}^T$ and $R_{21}+R_{12}=P$. And, since $l>2$, there are symmetric matrices $R_{11},\dots,R_{ll}$ such that $-(R_{11}+R_{22})=Q$ and $\sum_{k=1}^lR_{kk}=R$. So we see that the polynomial 
$$
g\left(R_{21}+R_{12},-(R_{11}+R_{22}),\sum_{k=1}^lR_{kk}\right)\in I(X_{i+1})
$$
is non-zero.
\end{proof}

Since $X\subsetneq V$, we know that $X_j\subsetneq\sp_{2n_j}$ for some $j\in\NN$. Using the previous lemma, we see that there is a $d\in\ZZ_{\geq0}$ such that $I(X_i)\cap K[r_{k\ell}|1\leq k,\ell\leq n_i]/(r_{k\ell}-r_{\ell k})$ contains a non-zero polynomial of degree at most $d$ for all $i>j$.

\begin{lm}\label{lm_boundedranksp}
Let $n\in\NN$, let $Y\subsetneq\sp_{2n}$ be an $\Sp_{2n}$-stable closed subset, let
$$
M=\begin{pmatrix}M_{11}&M_{12}\\M_{21}&M_{22}\end{pmatrix}\in Y
$$
be an element and suppose that 
$$
I(Y)\cap K[r_{k\ell}|1\leq k,\ell\leq n]/(r_{k\ell}-r_{\ell k})
$$
contains a non-zero polynomial of degree $m+1$. Then $\rk(M_{12}),\rk(M_{21})\leq m$. Furthermore, if $n>6m$, then $\rk(M_{11})=\rk(M_{22})\leq 3m/2$ and $\rk(M)\leq 5m$.
\end{lm}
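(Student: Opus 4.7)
The non-zero polynomial $f$ of degree $m+1$ in the $r$-variables vanishes on the closure of the projection of $Y$ onto the $M_{21}$-block $\Sym_n$. That closure is stable under the congruence action of $\GL_n$ coming from the Levi embedding $A \mapsto \Diag(A, A^{-T})$. By the Remark following the Main Theorem, in characteristic $\ne 2$ the congruence orbits on $\Sym_n$ are classified by rank, so the only $\GL_n$-stable closed subsets of $\Sym_n$ are the rank varieties $\{R \in \Sym_n : \rk R \leq r\}$; the ideal of such a variety is generated by the $(r+1)\times(r+1)$ minors and contains no non-zero polynomial of degree $\leq r$. Consequently $r \leq m$ and $\rk(M_{21}) \leq m$. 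For $\rk(M_{12})$, observe that the element $J = \begin{pmatrix}0 & I_n \\ -I_n & 0\end{pmatrix} \in \Sp_{2n}$ conjugates $\begin{pmatrix}P & Q \\ R & -P^T\end{pmatrix}$ to $\begin{pmatrix}-P^T & -R \\ -Q & P\end{pmatrix}$; pulling back $f(R)$ by $J$ yields $f(-Q) \in I(Y)$, a non-zero polynomial of the same degree in the $q$-variables, and the same argument gives $\rk(M_{12}) \leq m$.

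Now assume $n > 6m$. For each symmetric $T \in \gl_n$, the element $\begin{pmatrix}I_n & 0 \\ T & I_n\end{pmatrix}$ lies in $\Sp_{2n}$, and a direct computation shows that conjugating $M$ by it transforms the $R$-block into $R + TP + P^TT - TQT$. Since the result lies in $Y$, its rank is at most $m$ by the first part, whence
\[
\rk(TP + P^TT) \leq \rk(R) + \rk(TQT) + m \leq 3m
\]
for every symmetric $T$. To convert this into a bound on $\rk(P)$, write $P = UV^T$ with $U, V \in K^{n \times r}$ of rank $r = \rk P$, so that $TP + P^TT = (TU)V^T + V(TU)^T$. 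A dimension count shows that as $T$ ranges over symmetric matrices the column space of $TU$ sweeps out a sufficiently large subset of the Grassmannian $\Gr_r(K^n)$ that for some $T$ it is in general position with the fixed subspace $\im(V)$; when $2r \leq n$ this forces $\rk(TP + P^TT) = 2r$, and when $2r > n$ the rank saturates at $n$. Combining with the upper bound $3m$, we obtain $\min(2\rk P, n) \leq 3m$; since $n > 6m$, the minimum cannot be $n$, and hence equals $2\rk P$, giving $\rk(P) \leq 3m/2$. As $M_{22} = -M_{11}^T$, the same bound applies to $\rk(M_{22})$.

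Finally,
\[
\rk(M) \leq \rk\begin{pmatrix}M_{11} \\ M_{21}\end{pmatrix} + \rk\begin{pmatrix}M_{12} \\ M_{22}\end{pmatrix} \leq 2\rk(M_{11}) + \rk(M_{12}) + \rk(M_{21}) \leq 3m + 2m = 5m.
\]
The main obstacle in this plan is justifying the genericity claim used above: one must verify that some symmetric $T$ yields $\rk(TP + P^TT) = \min(2r, n)$, equivalently that the map $T \mapsto \operatorname{colspan}(TU)$ from $\Sym_n$ to $\Gr_r(K^n)$ has image not contained in the sub-locus of subspaces meeting $\im(V)$ non-trivially. All other steps are straightforward applications of the $\Sp_{2n}$-conjugation and standard rank inequalities.
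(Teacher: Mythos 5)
Your proof reaches the same conclusion as the paper's and the first half (bounding $\rk(M_{12})$ and $\rk(M_{21})$ via the $\GL_n$-orbit closure of the $R$-projection, then transporting to the $Q$-block by conjugating with $J$) is essentially identical to what the paper does, since $J$ is exactly the special case $A=0$ of the symplectic element $\begin{pmatrix}0&I_n\\-I_n&A\end{pmatrix}$ the paper uses. The second half takes a genuinely different route: you conjugate by the lower-triangular unipotent $\begin{pmatrix}I_n&0\\T&I_n\end{pmatrix}$ to get $\rk(TP+P^TT)\leq 3m$ for all symmetric $T$ and then argue directly by genericity of $T$, whereas the paper conjugates by $\begin{pmatrix}0&I_n\\-I_n&A\end{pmatrix}$, uses $\GL_n$-stability to pass to arbitrary conjugates $P'\sim P$, specializes $A=\Diag(I_{2m+1},0)$ to extract an off-diagonal block $P'_{21}$ of small rank, invokes Proposition~\ref{prop_tuplerank} to get $\rk(P,I_n)\leq 3m/2$, and finally specializes $A=I_n$ to rule out a nonzero shift $\lambda$.

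The gap you flag is real but fillable, and it is exactly the step that the paper's use of Proposition~\ref{prop_tuplerank} avoids. The genericity statement you need is: writing $P=UV^T$ with $U,V\in K^{n\times r}$ of rank $r$, there is a symmetric $T$ with $\im(TU)\cap\im(V)=\{0\}$ and $\rk(TU)=r$ (when $2r\leq n$), which forces $\rk((TU)V^T+V(TU)^T)=2r$. This holds because the rational map $\Sym_n\dashrightarrow\Gr_r(K^n)$, $T\mapsto T\cdot\im(U)$, is dominant: its differential at $T=I_n$ is the map $\Sym_n\to\operatorname{Hom}(\im U,K^n/\im U)$ given by ``restrict and project,'' and in coordinates with $\im U=\span(e_1,\dots,e_r)$ this reads $T\mapsto T_{21}$, which is clearly surjective. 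You also use the claim for $2r>n$ (that the rank saturates at $n$), but this case can be sidestepped: it suffices to show $\sup_T\rk(TP+P^TT)>3m$ whenever $\rk P>3m/2$, and the $2r\le n$ version already gives this since $n>6m$ forces $3m<n$. So your proof works, but filling the genericity step costs you roughly as much as the paper's detour through Proposition~\ref{prop_tuplerank}, which the paper has already paid for elsewhere; in the paper's organization, reusing that proposition is the cheaper route.

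One small stylistic remark: in the final rank estimate you write $\rk(M)\leq\rk\begin{pmatrix}M_{11}\\M_{21}\end{pmatrix}+\rk\begin{pmatrix}M_{12}\\M_{22}\end{pmatrix}$; this is fine but the simpler subadditivity $\rk(M)\leq\rk(M_{11})+\rk(M_{12})+\rk(M_{21})+\rk(M_{22})$, together with $\rk(M_{22})=\rk(M_{11})$, gives the same bound with less notation.
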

\begin{proof}
Let $\GL_n$ act on $\sp_{2n}$ via the diagonal embedding
\begin{eqnarray*}
\GL_n&\hookrightarrow&\Sp_{2n}\\
g&\mapsto&\Diag(g,g^{-T})
\end{eqnarray*}
and on $\{R\in\gl_n\mid R=R^T\}$ by $g\cdot R=g^{-T}Rg^{-1}$. Then the projection map
\begin{eqnarray*}
\pi\colon\sp_{2n}&\to&\gl_n\\
\begin{pmatrix}P&Q\\R&S\end{pmatrix}&\mapsto&R
\end{eqnarray*}
is $\GL_n$-equivairant. Let $Z$ be the closure of $\pi(Y)$ in $\{R\in\gl_n\mid R=R^T\}$. Since $Y$ is $\GL_n$-stable, so are $\pi(Y)$ and $Z$. Since $\cha(K)\neq2$, the $\GL_n$-orbits of $\{R\in\gl_n\mid R=R^T\}$ consist of all symmetric matrices of equal rank. So $Z$ must consist of all symmetric matrices of rank at most $h$ for some $h\leq n$. Since $I(Z)$ contains a non-zero polynomial of degree $m+1$, we see that $h\leq m$. See, for example, \cite[\S4]{sturmfels-sullivant}. So 
$$
Y\subseteq\left\{\begin{pmatrix}P&Q\\R&S\end{pmatrix}\in\sp_{2n}~\middle|~\rk(R)\leq m\right\}.
$$
Let $A\in\gl_n$ be a symmetric matrix. Then we have
$$
\begin{pmatrix}0&I_n\\-I_n&A\end{pmatrix}\in\Sp_{2n}
$$
with inverse 
$$
\begin{pmatrix}A&-I_n\\I_n&0\end{pmatrix}.
$$
Let
$$
\begin{pmatrix}P&Q\\R&S\end{pmatrix}
$$
be an element of $Y$. Then
$$
\begin{pmatrix}0&I_n\\-I_n&A\end{pmatrix}\begin{pmatrix}P&Q\\R&S\end{pmatrix} \begin{pmatrix}0&I_n\\-I_n&A\end{pmatrix}^{-1}=\begin{pmatrix}\bullet&\bullet\\ARA+AS-PA-Q&\bullet\end{pmatrix}\in Y.
$$
So we get $\rk(ARA+AS-PA-Q)\leq m$ for all symmetric matrices $A\in\gl_n$. For $A=0$, this gives us $\rk(Q)\leq m$ and so $\rk(M_{12})\leq m$ in particular. For all $A$, we can write
$$
PA+(PA)^T=(ARA+AS-PA-Q)-ARA+Q
$$
since $S=-P^T$. We get
$$
\rk(PA+(PA)^T)\leq \rk(ARA+AS-PA-Q)+\rk(ARA)+\rk(Q)\leq 3m.
$$
Since we had no conditions on the element
$$
\begin{pmatrix}P&Q\\R&S\end{pmatrix}\in Y,
$$
we also get $\rk(P'A+(P'A)^T)\leq 3m$ for all 
$$
\begin{pmatrix}P'&\bullet\\\bullet&\bullet\end{pmatrix}\in \GL_n\cdot\begin{pmatrix}P&Q\\R&S\end{pmatrix}\subseteq Y
$$
and hence $\rk(P'A+(P'A)^T)\leq 3m$ for all $P'\sim P$. Now assume that $n>6m$. Choose $A=\Diag(I_{2m+1},0)$ and write
$$
P'=\begin{pmatrix}P_{11}'&P_{12}'\\P_{21}'&P_{22}'\end{pmatrix}\sim P
$$
with $P_{21}'\in\gl_{2m+1}$. Then 
$$
P'A+(P'A)^T=\begin{pmatrix}\bullet&\bullet&P_{21}'^T\\\bullet\\P_{21}'\end{pmatrix}
$$
and hence $\rk(P_{21}')\leq 3m/2$. By Proposition \ref{prop_tuplerank}, we see that $\rk(P,I_n)\leq 3m/2$ and hence $\rk(P+\lambda I_n)\leq 3m/2$ for some $\lambda\in K$. Next, choose $A=I_n$. Then we see that $\rk(P+P^T)\leq 3m$. So
$$
\rk(2\lambda I_n)\leq \rk(P+P^T)+\rk(P+\lambda I_n)+\rk(P^T+\lambda I_n)\leq 6m<n
$$
and hence $\lambda=0$. So we in fact have $\rk(P)\leq 3m/2$. In particular, we see that $\rk(M_{11})=\rk(M_{22})\leq 3m/2$. Combining this with $\rk(M_{12}),\rk(M_{21})\leq m$, we get $\rk(M)\leq 5m$.
\end{proof}

Using Lemma \ref{lm_boundedranksp}, we see that there is an $m\in\ZZ_{\geq0}$ such that 
$$
X_i\subseteq\left\{\begin{pmatrix}P&Q\\R&S\end{pmatrix}\in\sp_{2n}~\middle|~\rk(P)\leq m\right\}
$$
for all $i\gg0$. As in the proof of Lemma \ref{lm_boundedtuplerankimpliesrk=0}, we see using Lemma \ref{lm_higherrank} that this in fact holds for $m=0$.

\begin{lm}
Let $n\in\NN$ and let $Y\subsetneq\sp_{2n}$ be an $\Sp_{2n}$-stable closed subset of 
$$
\left\{\begin{pmatrix}0&Q\\R&0\end{pmatrix}~\middle|~\begin{array}{c}Q\in\gl_n,Q=Q^T\\R\in\gl_n,R=R^T\end{array}\right\}.
$$
Then $Y\subseteq\{0\}$.
\end{lm}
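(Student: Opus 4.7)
The plan is to exploit unipotent elements of $\Sp_{2n}$ whose conjugation action must preserve $Y$, together with the hypothesis that every element of $Y$ has vanishing diagonal blocks. The symplectic form is $J=\begin{pmatrix}&I_n\\-I_n&\end{pmatrix}$, and a direct check shows that
$$
U(A)=\begin{pmatrix}I_n&A\\0&I_n\end{pmatrix}\in\Sp_{2n}\quad\text{and}\quad L(B)=\begin{pmatrix}I_n&0\\B&I_n\end{pmatrix}\in\Sp_{2n}
$$
whenever $A=A^T$ and $B=B^T$.

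Pick $M=\begin{pmatrix}0&Q\\R&0\end{pmatrix}\in Y$. A direct multiplication gives
$$
U(A)\,M\,U(A)^{-1}=\begin{pmatrix}AR&Q-ARA\\R&-RA\end{pmatrix}.
$$
Since $Y$ is $\Sp_{2n}$-stable, this matrix lies in $Y$, hence its diagonal blocks vanish: $AR=0$ and $RA=0$ for every symmetric $A\in\gl_n$. Taking $A=I_n$ forces $R=0$. Symmetrically,
$$
L(B)\,M\,L(B)^{-1}=\begin{pmatrix}-QB&Q\\R-BQB&BQ\end{pmatrix}\in Y
$$
forces $QB=0$ for every symmetric $B$, and $B=I_n$ yields $Q=0$. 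Hence $M=0$, and so $Y\subseteq\{0\}$.

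There is really no obstacle here: the proof reduces to a two-line matrix computation, combined with the observation that the symmetric unipotent radicals of the two opposite parabolics in $\Sp_{2n}$ already generate enough conjugations to force $Q=R=0$. The hypothesis $Y\subsetneq\sp_{2n}$ is not even used, since the containment in the anti-block-diagonal subspace is already strictly stronger than being a proper subset of $\sp_{2n}$.
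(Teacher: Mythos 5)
Your proof is correct, and it takes a genuinely different (and more self-contained) route than the paper's. Both proofs begin the same way in spirit: conjugate an element $\begin{pmatrix}0&Q\\R&0\end{pmatrix}\in Y$ by a symplectic matrix that moves $R$ into the upper-left block, and then use the hypothesis that $Y$ lies inside the anti-block-diagonal subspace to conclude $R=0$. (The paper conjugates by $\begin{pmatrix}0&I_n\\-I_n&I_n\end{pmatrix}$; you conjugate by $U(I_n)$; both work.) The difference is in how you get $Q=0$. Having shown $R=0$ on all of $Y$, the paper notes that the linear forms $r_{k\ell}$ lie in $I(Y)$ and then invokes the technical Lemma \ref{lm_boundedranksp} with $m=0$ to force $\rk(M_{12})=0$, hence $Q=0$. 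You instead observe that the opposite lower unipotent $L(I_n)=\begin{pmatrix}I_n&0\\I_n&I_n\end{pmatrix}$ is also symplectic and conjugating by it puts $-Q$ in the upper-left block, giving $Q=0$ directly. This makes the argument symmetric in $Q$ and $R$ and avoids appealing to the heavier rank-bound lemma altogether. Both approaches are short, but yours is elementary and self-contained, which is a small improvement. Your closing remark is also right: since the anti-block-diagonal subspace is a proper subspace of $\sp_{2n}$ for $n\geq 1$, the explicit hypothesis $Y\subsetneq\sp_{2n}$ is redundant for your argument, though the paper does rely on it in order to apply Lemma \ref{lm_boundedranksp}.
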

\begin{proof}
Let 
$$
\begin{pmatrix}0&Q\\R&0\end{pmatrix}
$$
be an element of $Y$. Then 
$$
\begin{pmatrix}0&I_n\\-I_n&I_n\end{pmatrix}\begin{pmatrix}0&Q\\R&0\end{pmatrix}\begin{pmatrix}0&I_n\\-I_n&I_n\end{pmatrix}^{-1}=\begin{pmatrix}R&\bullet\\\bullet&\bullet\end{pmatrix}\in Y
$$
since $Y$ is $\Sp_{2n}$-stable and therefore $R=0$. By Lemma \ref{lm_boundedranksp}, we see that $Q=0$. 
\end{proof}

The lemma shows that $X\subseteq\{0\}$. So when $\#\{i\mid l_i>1\}=\infty$, the only $G$-stable closed subsets of $V$ are $V$, $\{0\}$ and $\emptyset$. This proves in particular that $V$ is $G$-Noetherian.

\section{Limits of classical groups of type D}

Recall that we assume that $\cha(K)\neq2$. In this section, we let $G$ be the direct limit of a sequence 
$$\begin{tikzcd}
\O_{2n_1} \arrow[hook,"\iota_1"]{r} & \O_{2n_2} \arrow[hook,"\iota_2"]{r} & \O_{2n_3} \arrow[hook,"\iota_3"]{r} & \dots
\end{tikzcd}$$
of diagonal embeddings given by
\begin{eqnarray*}
\iota_i\colon \O_{2n_i}&\hookrightarrow&\O_{2n_{i+1}}\\
\begin{pmatrix}A&B\\C&D\end{pmatrix}&\mapsto&\begin{pmatrix}\Diag(A,\dots,A,I_{z_i})&\Diag(B,\dots,B,0)\\\Diag(C,\dots,C,0)&\Diag(D,\dots,D,I_{z_i})\end{pmatrix}
\end{eqnarray*}
with $l_i$ blocks $A,B,C,D\in\gl_{n_i}$ for some $l_i\in\NN$ and $z_i\in\ZZ_{\geq0}$. We let $V$ be the inverse limit of the sequence
$$\begin{tikzcd}
\o_{2n_1}& \arrow[two heads]{l} \o_{2n_2} & \arrow[two heads]{l} \o_{2n_3} & \arrow[two heads]{l} \dots
\end{tikzcd}$$
where the maps are given by
\begin{eqnarray*}
\o_{2n_{i+1}}&\twoheadrightarrow&\o_{2n_i}\\
\begin{pmatrix}P_{11}&\dots&P_{1l_i}&\bullet&Q_{11}&\dots&Q_{1l_i}&\bullet\\\vdots&&\vdots&\vdots&\vdots&&\vdots&\vdots\\P_{l_i1}&\dots&P_{l_il_i}&\vdots&Q_{l_i1}&\dots&Q_{l_il_i}&\vdots\\\bullet&\dots&\dots&\bullet&\bullet&\dots&\dots&\bullet\\R_{11}&\dots&R_{1l_i}&\bullet&S_{11}&\dots&S_{1l_i}&\bullet\\\vdots&&\vdots&\vdots&\vdots&&\vdots&\vdots\\R_{l_i1}&\dots&R_{l_il_i}&\vdots&S_{l_i1}&\dots&S_{l_il_i}&\vdots\\\bullet&\dots&\dots&\bullet&\bullet&\dots&\dots&\bullet\end{pmatrix}&\mapsto&\begin{pmatrix}\sum_{k=1}^{l_i}P_{kk}&\sum_{k=1}^{l_i}Q_{kk}\\\sum_{k=1}^{l_i}R_{kk}&\sum_{k=1}^{l_i}S_{kk}\end{pmatrix}
\end{eqnarray*}
with $P_{k\ell}=-S_{\ell k}^T,Q_{k\ell},R_{k\ell}\in\gl_{n_i}$ such that $Q_{k\ell}+Q_{\ell k}^T=R_{k\ell}+R_{\ell k}^T=0$.

\begin{thm}\label{thm_mainod}
The space $V$ is $G$-Noetherian.
\end{thm}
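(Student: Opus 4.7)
The proof should follow the same three-step strategy as the Type C case (Theorem \ref{thm_mainsp}), with modifications to account for the fact that in $\o_{2n}$ the off-diagonal blocks $Q,R$ are skew-symmetric rather than symmetric, and the conjugating elements must preserve the symmetric form $\begin{pmatrix}&I_n\\I_n&\end{pmatrix}$ rather than the symplectic one.

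First, as in the Type C case, if $\#\{i\mid l_i>1\}<\infty$ then the theorem follows from \cite[Theorem 1.2]{eggermont-snowden}, so after passing to an infinite subsequence I may assume $l_i\geq 3$ for all $i\in\NN$. Let $X\subsetneq V$ be a $G$-stable closed subset with $X_i$ the closure of its projection to $\o_{2n_i}$. The goal is to show $X\subseteq\{0\}$.

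The plan is to carry out three analogues of Lemmas \ref{lm_toppartnotinsp}, \ref{lm_movingeqsp2} and \ref{lm_boundedranksp}. (i) A density lemma: let $\GL_n$ act on $\o_{2n}$ via the diagonal embedding $g\mapsto\Diag(g,g^{-T})$; since the $\GL_n$-orbit of $\{P=P^T\}$ is dense in $\gl_n$ (as $K$ is infinite and diagonal matrices are symmetric), any nonzero $f\in I(Y)$ can, after a $\GL_n$-translation, be assumed to have top-graded part not vanishing on the subvariety cut out by $P=P^T$. (ii) A moving-equations lemma: for the input polynomial $f\in I(X_i)$ with appropriate top-graded part $g$, construct a unipotent $A(\lambda)\in\O_{2n_{i+1}}$ (of the same block shape as in Lemma~\ref{lm_movingeqsp2}, but with the signs and symmetry pattern adjusted so that $AJA^T=J$ for the symmetric form) whose conjugation action shifts the $R$-blocks into the $P$- and $Q$-positions. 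Extracting the top $\lambda$-coefficient and using that $l_i\geq 3$ together with the fact that every skew-symmetric matrix decomposes as $R_{12}+R_{12}^T$-style combinations appropriately, one obtains a non-zero polynomial in $I(X_{i+1})\cap K[r_{k\ell}\mid 1\leq k,\ell\leq n_{i+1}]/(r_{k\ell}+r_{\ell k})$ of bounded degree. (iii) A bounded-rank lemma: project $X_i$ onto the skew-symmetric $R$-block; since in characteristic $\neq 2$ the $\GL_n$-orbits on skew-symmetric matrices under $R\mapsto g^{-T}Rg^{-1}$ are exactly the matrices of each given (necessarily even) rank, one deduces a uniform bound on $\rk(R)$, and then a chain of conjugations by $\O_{2n}$-elements of the form $\begin{pmatrix}0&I\\-I&A\end{pmatrix}$-analogues (with symmetric form replacing symplectic) transfers this bound to the other blocks, yielding a uniform bound on $\rk(M)$ for $M\in X_i$.

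Having bounded the rank of the limit elements uniformly, I would then reduce to rank $0$ using Lemma~\ref{lm_higherrank} exactly as in the proof of Lemma~\ref{lm_boundedtuplerankimpliesrk=0}: for large enough $i$, the multiplicity $l_i\geq 3$ forces any positive-rank element of $X_{i+1}$ to project to something of strictly larger rank in $X_i$, contradicting the uniform bound. Finally, an analogue of the last lemma of the Type C section, namely that an $\O_{2n}$-stable closed subset of $\{(0,Q,R)\mid Q,R\text{ skew-symmetric}\}$ is contained in $\{0\}$, is proved by the same trick: conjugation by $\begin{pmatrix}0&I_n\\-I_n&I_n\end{pmatrix}$ (or an $\O_{2n}$-analogue thereof) moves the $R$-block into the $P$-position, and the previous bounded-rank conclusion forces $R=Q=0$.

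The main obstacle will be step (ii): producing the correct unipotent element $A(\lambda)\in\O_{2n}$ whose commutator with a generic element of $\o_{2n}$ shifts the $R$-block entries into both the $P$- and $Q$-positions. The Type C version uses the skew-symmetric pattern $\begin{pmatrix}I_m&&&&\lambda I_m\\&I_m&&\lambda I_m&\\&&I&&&\\&&&I_m&&\\&&&&I_m\\&&&&&I\end{pmatrix}$; for Type D, the signs of the $\lambda I_m$ entries must be flipped (or placed symmetrically rather than antisymmetrically) so that $A(\lambda)JA(\lambda)^T=J$ for $J=\begin{pmatrix}&I\\I&\end{pmatrix}$. Once this conjugation element is written down correctly and its action on the generic element of $\o_{2n_{i+1}}$ is computed, the argument proceeds essentially verbatim, with the parity constraint (even rank for skew-symmetric matrices) absorbed harmlessly into the constants appearing in the bounded-rank estimates.
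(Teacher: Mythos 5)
Your proposal matches the paper's proof of Theorem~\ref{thm_mainod} in structure and in every essential detail: the paper states Lemmas~\ref{lm_e545ey3g} and \ref{lm_d89fg7w98} with proofs declared analogous to the Type $\C$ case, with the conjugating unipotent $A(\lambda)$ modified exactly by the sign flip you describe so that it lies in $\O_{2n}$; then Lemma~\ref{lm_boundedrankod} derives rank bounds block-by-block using the $\GL_n$-action and $\O_{2n}$-elements of the form $\bigl(\begin{smallmatrix}0&I_n\\I_n&A\end{smallmatrix}\bigr)$ and $\bigl(\begin{smallmatrix}I_n&B\\&I_n\end{smallmatrix}\bigr)$ with $A,B$ skew-symmetric, followed by the reduction to rank zero via Lemma~\ref{lm_higherrank} and a final lemma forcing the off-diagonal blocks to vanish. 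The only slight imprecision is the specific matrix $\bigl(\begin{smallmatrix}0&I_n\\-I_n&I_n\end{smallmatrix}\bigr)$ you name at the end, which is not actually in $\O_{2n}$; the paper instead uses $\bigl(\begin{smallmatrix}I_n&A\\&I_n\end{smallmatrix}\bigr)$ with $A$ skew-symmetric to move the $R$-block into the $P$-position, but you flagged this as an analogue to be adjusted, so the approach is sound.
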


This proof of this theorem will have the same structure as the proof of Theorem \ref{thm_mainsp}. Let $X\subsetneq V$ be a $G$-stable closed subset. Let $X_i$ be the closure of the projection of $X$ to $\o_{2n_i}$ and let $I(X_i)\subseteq K[\o_{2n_i}]$ be the ideal of $X_i$. If $\#\{i\mid l_i>1\}<\infty$, then Theorem \ref{thm_mainod} follows from \cite[Theorem 1.2]{eggermont-snowden}. So we assume that $\#\{i\mid l_i>1\}=\infty$. By restricting to an infinite subsequence, we may assume that $l_i\geq 3$ for all $i\in\NN$.

\begin{lm}\label{lm_e545ey3g}
Let $n\in\NN$, let $Y\subsetneq\o_{2n}$ be an $\O_{2n}$-stable closed subset and let $Z$ be the closed subset
$$
\left\{\begin{pmatrix}P&Q\\R&-P^T\end{pmatrix}\in\sp_{2n}~\middle|~P=P^T\right\}
$$
of $\o_{2n}$. Then there is a non-zero polynomial $f\in I(Y)$ whose top-graded part is not contained in the ideal of $Z$.
\end{lm}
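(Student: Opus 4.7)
The plan is to mirror the proof of Lemma~\ref{lm_toppartnotinsp} in the type $\D$ setting. I will start with any non-zero $f\in I(Y)$ and observe that its top-graded part $g$ (in the grading with $\grad(r_{k\ell})=0$, $\grad(p_{k\ell})=1$, $\grad(q_{k\ell})=2$) is again non-zero. The goal will then be to translate $f$ by a suitable auxiliary group element so that this top-graded part does not vanish identically on $Z$.

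The auxiliary group will be the copy of $\GL_n$ sitting diagonally inside $\O_{2n}$ via $h\mapsto\Diag(h,h^{-T})$; a direct computation shows that this map preserves the bilinear form $\begin{pmatrix}0&I_n\\I_n&0\end{pmatrix}$ defining $\O_{2n}$. The induced conjugation action on $\o_{2n}$ sends
$$
\begin{pmatrix}P&Q\\R&-P^T\end{pmatrix}\mapsto\begin{pmatrix}hPh^{-1}&hQh^T\\h^{-T}Rh^{-1}&-(hPh^{-1})^T\end{pmatrix},
$$
so $P$ transforms by similarity while $Q$ and $R$ transform by congruence (preserving skew-symmetry). Since the action is linear separately in the blocks $P$, $Q$ and $R$, the dual action on $K[\o_{2n}]$ respects the grading. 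As $I(Y)$ is $\O_{2n}$-stable, it is in particular $\GL_n$-stable, so for every $h\in\GL_n$ we have $h\cdot f\in I(Y)$ with top-graded part $h\cdot g$. It therefore suffices to find some $h$ with $h\cdot g\notin I(Z)$, or equivalently, with $g$ not identically zero on $h^{-1}\cdot Z$.

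To this end I will show that $\GL_n\cdot Z$ is Zariski-dense in $\o_{2n}$. Since congruence preserves skew-symmetry, the $Q$- and $R$-components of elements of $\GL_n\cdot Z$ already range over all skew-symmetric matrices, so $\GL_n\cdot Z$ equals the set of $(P',Q',R')\in\o_{2n}$ with $P'$ similar to some symmetric matrix. The density then reduces to showing that $\{hPh^{-1}\mid h\in\GL_n,\,P=P^T\}$ is dense in $\gl_n$, which follows from dominance of the morphism $\GL_n\times K^n\to\gl_n$, $(h,\lambda)\mapsto h\,\Diag(\lambda_1,\dots,\lambda_n)\,h^{-1}$: at a point $(I_n,\lambda)$ with the $\lambda_i$ distinct, the differential sends $(X,\mu)$ to $[X,\Diag(\lambda)]+\Diag(\mu)$, whose image contains every off-diagonal entry (because $[X,\Diag(\lambda)]_{ij}=(\lambda_j-\lambda_i)X_{ij}$) and every diagonal entry (via the $\Diag(\mu)$ term).

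Finally, since $g$ is a non-zero regular function on $\o_{2n}$ and $\GL_n\cdot Z$ is Zariski-dense, there must exist $h\in\GL_n$ with $h^{-1}\cdot g$ not vanishing on $Z$; then $h^{-1}\cdot f\in I(Y)$ has top-graded part $h^{-1}\cdot g\notin I(Z)$, as required. The only non-trivial step is the density computation for $\GL_n\cdot\{P=P^T\}$ in $\gl_n$; everything else is a direct transcription of the type $\C$ argument.
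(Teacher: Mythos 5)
Your proof is correct and takes essentially the same approach as the paper: the paper's proof of this lemma is literally ``analogous to the proof of Lemma~\ref{lm_toppartnotinsp}'', and your argument is exactly that transcription to the type $\D$ setting, using the diagonal embedding $\GL_n\hookrightarrow\O_{2n}$, the fact that the induced action is block-linear and hence preserves the grading, and the density of $\GL_n\cdot Z$. The only difference is cosmetic: where the paper disposes of the density of conjugates of symmetric matrices with the one-liner ``since $K$ is infinite and diagonal matrices are symmetric'', you instead verify dominance via a differential computation at a diagonal matrix with distinct eigenvalues — both are fine, and both boil down to density of diagonalizable matrices.
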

\begin{proof}
The proof is analogous to the proof of Lemma \ref{lm_toppartnotinsp}.
\end{proof}

\begin{lm}\label{lm_d89fg7w98}
Let $i\in\NN$ and let $f=f(P,Q,R)\in I(X_i)$ be a non-zero polynomial whose top-graded part $g$ is not contained in the ideal of
$$
\left\{\begin{pmatrix}P&Q\\R&-P^T\end{pmatrix}\in\o_{2n_i}~\middle|~P=P^T\right\}.
$$
Then $I(X_{i+1})\cap K[r_{k\ell}|1\leq k,\ell\leq n_{i+1}]/(r_{k\ell}+r_{\ell k})$ contains a non-zero polynomial with degree at most $\deg(f)$.
\end{lm}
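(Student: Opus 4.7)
My plan is to mirror the proof of Lemma \ref{lm_movingeqsp2}, replacing the symplectic one-parameter family of conjugating matrices by an orthogonal one. Setting $m=n_i$, $z=z_i$, $l=l_i$ and $n=n_{i+1}=lm+z$, I take $H\in\o_{2n}$ to be the generic block-structured element used in the proof of Lemma \ref{lm_movingeqsp2} and consider
$$
A(\lambda)=\begin{pmatrix} I_n & \lambda M \\ 0 & I_n\end{pmatrix},\qquad M=\begin{pmatrix} 0 & I_m & 0 \\ -I_m & 0 & 0 \\ 0 & 0 & 0\end{pmatrix}.
$$
Since $M$ is skew-symmetric, $A(\lambda)$ preserves the form $\begin{pmatrix} 0 & I_n \\ I_n & 0\end{pmatrix}$, so $A(\lambda)\in\O_{2n}$. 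This is the one substantive change from the symplectic setting, where $M$ had to be symmetric in order for $A(\lambda)$ to lie in $\Sp_{2n}$.

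Next I compute $A(\lambda)HA(\lambda)^{-1}$ in block form, using the type D constraints $S_{k\ell}=-P_{\ell k}^T$ and $R_{k\ell}=-R_{\ell k}^T$. A direct calculation shows that the sums of diagonal blocks become
$$
\sum_k P_{kk}'=\sum_k P_{kk}+\lambda(R_{21}+R_{21}^T),\qquad \sum_k R_{kk}'=\sum_k R_{kk},
$$
$$
\sum_k Q_{kk}'=\sum_k Q_{kk}+\lambda\Phi+\lambda^2(R_{11}+R_{22})
$$
for some polynomial $\Phi$ in the $P$- and $S$-blocks. Letting $A(\lambda)$ act on the pullback $f(\sum_k P_{kk},\sum_k Q_{kk},\sum_k R_{kk})\in I(X_{i+1})$ and extracting the coefficient of $\lambda^d$, where $d=\grad(g)$, recovers $g(R_{21}+R_{21}^T,R_{11}+R_{22},\sum_k R_{kk})$: under the grading $\grad$, each $p$-variable in a monomial contributes $\lambda^1$, each $q$-variable contributes $\lambda^2$ and each $r$-variable contributes $\lambda^0$, so only the grade-$d$ monomials of $f$ (i.e.\ those forming $g$) survive at the top power of $\lambda$. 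Since $K$ is infinite, this coefficient lies in the $K$-span of the $A(\lambda)$-orbit and hence in $I(X_{i+1})$; it involves only $r$-type variables and has degree at most $\deg(g)\leq\deg(f)$.

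The main obstacle, though ultimately a routine verification, is to check that this coefficient is not the zero polynomial. The hypothesis says that $g$ does not vanish identically on the locus where the $P$-argument is symmetric and the $Q$- and $R$-arguments are skew-symmetric. Given a witness $(P_*,Q_*,R_*)$ there, I would set $R_{21}=P_*/2$ (using $\cha(K)\neq 2$ so that $R_{21}+R_{21}^T=P_*$), $R_{11}=Q_*$, $R_{22}=0$, $R_{33}=R_*-Q_*$ (using $l\geq 3$) and $R_{44}=\cdots=R_{ll}=0$. Since $P_*$ is symmetric and $Q_*,R_*$ are skew-symmetric, these choices respect all constraints on the blocks of $R_H$, so the substitution realizes $(P_*,Q_*,R_*)$ and the extracted polynomial evaluates to $g(P_*,Q_*,R_*)\neq 0$, as required.
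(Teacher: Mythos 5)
Your proof is correct and takes exactly the approach the paper intends: you reconstruct the same matrix $A(\lambda)=\begin{pmatrix}I_n&\lambda M\\0&I_n\end{pmatrix}$ with the skew-symmetric $M$ (which is precisely the matrix the paper supplies in its one-line "analogous" proof), carry through the block computation with the type~$\D$ sign conventions, extract the top-$\grad$ coefficient, and verify non-vanishing using $\cha(K)\neq 2$ and $l\geq 3$. The details you fill in are exactly those the paper omits, and they check out.
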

\begin{proof}
The proof is analogous to the proof of Lemma \ref{lm_movingeqsp2}, replacing $A(\lambda)$ by the matrix
$$
\begin{pmatrix}I_m&&&&\lambda I_m\\&I_m&&-\lambda I_m\\&&I_{n-2m}\\&&&I_m\\&&&&I_m\\&&&&&I_{n-2m}\end{pmatrix}\in\O_{2n}.
$$
\end{proof}

Since $X\subsetneq V$, we know that $X_j\subsetneq\o_{2n_j}$ for some $j\in\NN$. Using the previous lemma, we see that there is a $d\in\ZZ_{\geq0}$ such that $I(X_i)\cap K[r_{k\ell}\mid 1\leq k,\ell\leq n_i]/(r_{k\ell}+r_{\ell k})$ contains a non-zero polynomial of degree at most $d$ for all $i>j$. 

\begin{lm}\label{lm_boundedrankod}
Let $n\in\NN$, let $Y\subsetneq\o_{2n}$ be an $\O_{2n}$-stable closed subset and suppose that 
$$
I(Y)\cap K[r_{k\ell}|1\leq k,\ell\leq n]/(r_{k\ell}+r_{\ell k})
$$
contains a non-zero polynomial of degree $m+1$. Then
$$
Y\subseteq \left\{\begin{pmatrix}P&Q\\R&S\end{pmatrix}\in\o_{2n}~\middle|~\rk(Q),\rk(R)\leq 2m\right\}.
$$
Furthermore, if $n\geq 20m+2$, then $\rk(M)\leq 10m$ for all $M\in Y$.
\end{lm}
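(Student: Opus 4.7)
The plan is to mirror the structure of Lemma~\ref{lm_boundedranksp}, adapting each step to the orthogonal setting. For the first assertion, let $\GL_n$ act on $\o_{2n}$ via the diagonal embedding $B \mapsto \Diag(B, B^{-T})$, which lands in $\O_{2n}$; then the projection $\pi \colon M \mapsto R$ is $\GL_n$-equivariant, where $\GL_n$ acts on skew-symmetric matrices by $B \cdot R = B^{-T} R B^{-1}$. The closure $Z$ of $\pi(Y)$ is $\GL_n$-stable, and since $\cha(K)\neq 2$ the $\GL_n$-orbits on skew-symmetric matrices are exactly the rank strata, so $Z = \{R = -R^T \mid \rk(R) \leq 2h\}$ for some integer $h$. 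The ideal of this variety is generated by the $(2h+2)\times(2h+2)$ principal Pfaffians, which have degree $h+1$, so the hypothesis that $I(Y) \cap K[r_{k\ell}]/(r_{k\ell}+r_{\ell k})$ contains a nonzero polynomial of degree $m+1$ forces $h \leq m$, giving $\rk(R) \leq 2m$. Conjugating $Y$ by $J = \begin{pmatrix} 0 & I_n \\ I_n & 0 \end{pmatrix} \in \O_{2n}$ interchanges the $Q$- and $R$-blocks (up to sign), so applying the same argument to the resulting polynomial in the $q$-variables yields $\rk(Q) \leq 2m$.

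For the second assertion, assume $n \geq 20m + 2$; I will show $\rk(P) \leq 3m$, which combined with the above yields $\rk(M) \leq 2\rk(P) + \rk(Q) + \rk(R) \leq 10m$. The trick is to conjugate by $g_A = \begin{pmatrix} 0 & I_n \\ I_n & A \end{pmatrix}$, which lies in $\O_{2n}$ precisely when $A$ is skew-symmetric. A direct calculation shows that the bottom-left block of $g_A M g_A^{-1}$ is $Q - PA - AP^T - ARA$, and since this is the $R$-block of an element of $Y$ it has rank at most $2m$; combined with $\rk(Q), \rk(ARA) \leq 2m$ one gets $\rk(PA + AP^T) \leq 6m$ for every skew $A$. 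Taking $A = \Diag(J_s, 0)$ with $J_s = \begin{pmatrix} 0 & I_s \\ -I_s & 0 \end{pmatrix}$ of size $2s \times 2s$, and blocking $P = \begin{pmatrix} P_1 & P_2 \\ P_3 & P_4 \end{pmatrix}$ with $P_1$ of size $2s \times 2s$, one obtains
$$
PA + AP^T = \begin{pmatrix} P_1 J_s + J_s P_1^T & J_s P_3^T \\ P_3 J_s & 0 \end{pmatrix},
$$
whose rank is at least $2\rk(P_3)$: the rows from $(P_3 J_s,0)$ and the columns from $(J_s P_3^T,0)^T$ are linearly independent because the bottom-right block vanishes. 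Hence $\rk(P_3) \leq 3m$. Varying the conjugating $B \in \GL_n$ (in particular over permutation matrices) and choosing $s$ so that both $2s$ and $n - 2s$ exceed $3m+1$ (possible since $n \geq 20m+2$) gives the same bound on $P_{\mathcal{K},\mathcal{L}}$ for all disjoint $\mathcal{K},\mathcal{L} \subseteq [n]$ of size $3m+1$, and Proposition~\ref{prop_tuplerank} then yields $\rk(P, I_n) \leq 3m$, i.e.\ $\rk(P + \lambda I_n) \leq 3m$ for some $\lambda \in K$.

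To force $\lambda = 0$, I would use a second family of orthogonal elements $k_{A''} = \begin{pmatrix} I_n & 0 \\ A'' & I_n \end{pmatrix}$ with $A''$ skew-symmetric. An analogous block-matrix calculation gives the bottom-left block of $k_{A''} M k_{A''}^{-1}$ equal to $A''P + P^T A'' + R - A''QA''$, of rank at most $2m$, so $\rk(A''P + P^T A'') \leq 6m$. Writing $P = -\lambda I_n + P'$ with $\rk(P') \leq 3m$, we have $A''P + P^T A'' = -2\lambda A'' + (A''P' + P'^T A'')$, and hence $\rk(2\lambda A'') \leq 6m + 2\rk(P') \leq 12m$. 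Choosing $A''$ of rank at least $n-1 \geq 20m+1 > 12m$, for instance $A'' = \Diag(J_{\lfloor n/2 \rfloor}, 0)$ (padded with a zero row/column if $n$ is odd), together with $\cha(K) \neq 2$ forces $\lambda = 0$, giving $\rk(P) \leq 3m$ and the desired bound on $\rk(M)$. The main obstacle will be the careful block-matrix conjugation computations (verifying in particular that each conjugator lies in $\O_{2n}$, which for types $\mathrm{D}$ genuinely requires $A, A''$ skew-symmetric and $\cha(K) \neq 2$) together with the rank lower bound $\rk(PA + AP^T) \geq 2\rk(P_3)$, which crucially exploits the vanishing bottom-right block.
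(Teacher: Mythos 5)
Your proof is correct and follows essentially the same route as the paper: project to the $R$-block to bound $\rk(R)\le 2m$ via Pfaffians, conjugate by $J$ for $Q$, derive $\rk(PA+AP^T)\le 6m$ from $\begin{pmatrix}0&I_n\\ I_n&A\end{pmatrix}$ with $A$ skew, apply Proposition~\ref{prop_tuplerank} to an off-diagonal block of $PA+AP^T$ to get $\rk(P,I_n)\le 3m$, and then rule out a nonzero eigenvalue by a second family of unipotent orthogonal conjugations. The only cosmetic differences are your specific choice of skew matrix $A=\Diag(J_s,0)$ versus the paper's anti-diagonal $\pm I_{3m+1}$ blocks, and your use of lower-triangular conjugators acting on the $P$-block directly (yielding $\rk(2\lambda A'')\le 12m$) versus the paper's upper-triangular conjugators applied to the full matrix $M$ together with $\rk(M,\Diag(I_n,-I_n))\le 10m$ — both land the same punch.
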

\begin{proof}
Let $Z$ be the closure of the subset 
$$
\left\{R~\middle|~\begin{pmatrix}P&Q\\R&S\end{pmatrix}\in Y\right\}
$$
of $\{R\in\gl_n\mid R+R^T=0\}$. Let $\GL_n$ act on $\o_{2n}$ via the diagonal embedding
\begin{eqnarray*}
\GL_n&\hookrightarrow&\O_{2n}\\
g&\mapsto&\Diag(g,g^{-T})
\end{eqnarray*}
and on $\{R\in\gl_n\mid R+R^T=0\}$ by $g\cdot R=gRg^T$. Then we see that $Y$ is $\GL_n$-stable and therefore $Z$ is also $\GL_n$-stable. So $Z$ must consist of all skew-symmetric matrices of rank at most $h$ for some even $h\leq n$. Since $I(Z)$ contains a non-zero polynomial of degree $m+1$, we see that $h\leq 2m$. See \cite[\S3]{abeasis-del-fra}. So 
$$
Y\subseteq\left\{\begin{pmatrix}P&Q\\R&S\end{pmatrix}\in\o_{2n}~\middle|~\rk(R)\leq 2m\right\}.
$$
Let $A\in\gl_n$ be a skew-symmetric matrix and let
$$
\begin{pmatrix}P&Q\\R&S\end{pmatrix}
$$
be an element of $Y$. Then we have
$$
\begin{pmatrix}0&I_n\\I_n&A\end{pmatrix}\in\O_{2n}
$$
and hence
$$
\begin{pmatrix}0&I_n\\I_n&A\end{pmatrix}\begin{pmatrix}P&Q\\R&S\end{pmatrix} \begin{pmatrix}0&I_n\\I_n&A\end{pmatrix}^{-1}=\begin{pmatrix}\bullet&\bullet\\Q+AS-PA-ARA&\bullet\end{pmatrix}\in Y.
$$
So we get $\rk(Q+AS-PA-ARA)\leq 2m$. Choosing $A=0$, we see that 
$$
Y\subseteq\left\{\begin{pmatrix}P&Q\\R&S\end{pmatrix}\in\o_{2n}~\middle|~\rk(Q)\leq 2m\right\}.
$$
Assume that $n\geq 2(3m+1)$. Since $S=-P^T$ and $A=-A^T$, we get
$$
\rk(PA-(PA)^T)\leq \rk(Q+AS-PA-ARA)+\rk(ARA)+\rk(Q)\leq 6m.
$$
Since $Y$ is $\GL_n$-stable, we have $\rk(P'A-(P'A)^T)\leq 6m$ for all $P'\sim P$. Choose 
$$
A=\begin{pmatrix}&&I_{3m+1}\\&0\\-I_{3m+1}\end{pmatrix}
$$ 
and write
$$
P'=\begin{pmatrix}P_{11}'&P_{12}'&P_{13}'\\P_{21}'&P_{22}'&P_{23}'\\P_{31}'&P_{32}'&P_{33}'\end{pmatrix}
$$
with $P_{11}',P_{13}',P_{31}',P_{33}'\in\gl_{3m+1}$. Then 
$$
P'A-(P'A)^T=\begin{pmatrix}\bullet&P_{23}'^T&\bullet\\-P_{23}'&0&P_{21}'\\\bullet&-P_{21}'^T&\bullet\end{pmatrix}
$$
has rank at most $6m$. Therefore the submatrix
$$
\begin{pmatrix}0&P_{21}'\\-P_{21}'^T&\bullet\end{pmatrix}
$$
also has rank at most $6m$ and hence and hence $\rk(P_{21}')\leq 3m$. By Proposition \ref{prop_tuplerank}, we see that $\rk(P,I_n)\leq 3m$. Hence
$$
Y\subseteq\{M\in\o_{2n}\mid \rk(M,\Diag(I_n,-I_n))\leq 2\cdot 2m+2\cdot 3m=10m\}.
$$
Assume that $n\geq 20m+2$, let $M+\lambda\Diag(I_n,-I_n)$ be an element of $Y$ with $\rk(M)\leq 10m$ and $\lambda\in K$ and let $B\in\gl_n$ be a skew-symmetric matrix of rank at least $n-1$. Then
$$
\begin{pmatrix}I_n&B\\&I_n\end{pmatrix}\in\O_{2n}
$$
and therefore
$$
\begin{pmatrix}I_n&B\\&I_n\end{pmatrix}\left(M+\lambda\Diag(I_n,-I_n)\right)\begin{pmatrix}I_n&B\\&I_n\end{pmatrix}^{-1}\in Y.
$$
So this element must be of the form $M'-\mu\Diag(I_n,-I_n)$ with $\rk(M)\leq 10m$ and $\mu\in K$. Now note that
$$
\rk\left(\lambda\begin{pmatrix}I_n&B\\&I_n\end{pmatrix}\Diag(I_n,-I_n)\begin{pmatrix}I_n&B\\&I_n\end{pmatrix}^{-1}+\mu\Diag(I_n,-I_n)\right)\leq \rk(M)+\rk(M')\leq 20m.
$$
So since
$$
\lambda\begin{pmatrix}I_n&B\\&I_n\end{pmatrix}\Diag(I_n,-I_n)\begin{pmatrix}I_n&B\\&I_n\end{pmatrix}^{-1}+\mu\Diag(I_n,-I_n)=\begin{pmatrix}\bullet&-2\lambda B\\\bullet&\bullet\end{pmatrix}
$$
and $\rk(2B)\geq n-1>20m$, we see that $\lambda=0$. Hence $Y$ consists of matrices of rank at most $10m$.
\end{proof}

Using Lemma \ref{lm_boundedrankod}, we see that there is an $m\in\ZZ_{\geq0}$ such that 
$$
X_i\subseteq\left\{\begin{pmatrix}P&Q\\R&S\end{pmatrix}\in\o_{2n}~\middle|~\rk(P)\leq m\right\}
$$
for all $i\gg0$. As in the proof of Lemma \ref{lm_boundedtuplerankimpliesrk=0}, we see using Lemma \ref{lm_higherrank} that this in fact holds for $m=0$.

\begin{lm}
Let $n\in\NN$ and let $Y\subsetneq\o_{2n}$ be an $\O_{2n}$-stable closed subset of 
$$
\left\{\begin{pmatrix}0&Q\\R&0\end{pmatrix}~\middle|~\begin{array}{c}Q\in\gl_n,Q+Q^T=0\\R\in\gl_n,R+R^T=0\end{array}\right\}.
$$
Then $Y\subseteq\{0\}$.
\end{lm}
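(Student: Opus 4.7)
The plan is to proceed in direct analogy with the corresponding lemma in the symplectic section: I will produce a family of elements of $\O_{2n}$ whose conjugation action moves the $R$-block into the $P$-block position. Since $Y$ is contained in the set of matrices with vanishing $P$-block (and vanishing $S$-block), this will force $R=0$, and a second application of the same idea (after swapping $Q$ and $R$) will then give $Q=0$. Along the way I only need very simple orthogonal elements and the assumption $\cha(K)\neq 2$.

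Fix an element $N=\begin{pmatrix}0 & Q\\ R & 0\end{pmatrix}\in Y$ with $Q,R\in\gl_n$ skew-symmetric, and for each skew-symmetric matrix $B\in\gl_n$ set $M(B)=\begin{pmatrix}I_n & B\\ 0 & I_n\end{pmatrix}$. A direct check of the orthogonality condition $M(B)\begin{pmatrix}0 & I_n\\ I_n & 0\end{pmatrix}M(B)^T=\begin{pmatrix}0 & I_n\\ I_n & 0\end{pmatrix}$ shows that $M(B)\in\O_{2n}$ precisely when $B$ is skew-symmetric. A short block computation then yields
$$
M(B)NM(B)^{-1}=\begin{pmatrix}BR & \bullet\\ R & \bullet\end{pmatrix};
$$
since $Y$ is $\O_{2n}$-stable, this conjugate lies in $Y$ and therefore has vanishing $(1,1)$-block, forcing $BR=0$ for every skew-symmetric $B\in\gl_n$.

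For $n\geq 2$, taking $B=e_ie_j^T-e_je_i^T$ with $i<j$ shows that rows $i$ and $j$ of $R$ both vanish, so letting $i,j$ range over $[n]$ I conclude $R=0$; the case $n=1$ is vacuous since every skew-symmetric $1\times 1$ matrix is zero, so the ambient set already equals $\{0\}$. To then kill $Q$, I note that the matrix $M_0=\begin{pmatrix}0 & I_n\\ I_n & 0\end{pmatrix}$ lies in $\O_{2n}$, and conjugation by $M_0$ sends $\begin{pmatrix}0 & Q\\ 0 & 0\end{pmatrix}$ to $\begin{pmatrix}0 & 0\\ Q & 0\end{pmatrix}\in Y$; now $Q$ plays the role of $R$ in a new element of $Y$, and the previous argument applied to this element gives $Q=0$. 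The main (and essentially only) obstacle is choosing the orthogonal elements so that they really lie in $\O_{2n}$ and so that their conjugation action places $R$ (and later $Q$) into a block on which $Y$ vanishes identically; once these choices are pinned down, the rest is linear algebra.
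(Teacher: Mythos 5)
Your proof is correct, and it takes a genuinely different (and more elementary) route for the second half. For the step $R=0$, you and the paper use essentially the same conjugation by unipotent upper-triangular orthogonal block matrices $\begin{pmatrix}I_n&B\\0&I_n\end{pmatrix}$ with $B$ skew-symmetric, landing $BR$ in the top-left block; your explicit choice of $B=e_ie_j^T-e_je_i^T$ nicely spells out why $BR=0$ for all skew-symmetric $B$ forces $R=0$ (and you correctly dispose of $n=1$ separately, since there a skew-symmetric $1\times 1$ block is automatically zero). The divergence is in the step $Q=0$: the paper invokes the earlier rank-bound Lemma \ref{lm_boundedrankod} with $m=0$ (once all $r_{k\ell}$ vanish on $Y$, that lemma gives $\rk(Q)\leq 0$), whereas you observe that the swap matrix $\begin{pmatrix}0&I_n\\I_n&0\end{pmatrix}$ lies in $\O_{2n}$ and carries $\begin{pmatrix}0&Q\\0&0\end{pmatrix}$ to $\begin{pmatrix}0&0\\Q&0\end{pmatrix}$, after which the $R=0$ argument applies verbatim. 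Your approach avoids re-importing the heavier rank-bound machinery and keeps the lemma entirely self-contained; the paper's approach is shorter to state given that Lemma \ref{lm_boundedrankod} is already proved in the same section. Both are sound under the standing hypothesis $\cha(K)\neq2$, which you rightly flag is needed for the skew-symmetric classification of the unipotent elements.
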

\begin{proof}
Let 
$$
\begin{pmatrix}0&Q\\R&0\end{pmatrix}
$$
be an element of $Y$. Then 
$$
\begin{pmatrix}I_n&A\\&I_n\end{pmatrix}\begin{pmatrix}0&Q\\R&0\end{pmatrix}\begin{pmatrix}I_n&A\\&I_n\end{pmatrix}^{-1}=\begin{pmatrix}AR&\bullet\\\bullet&\bullet\end{pmatrix}\in Y
$$
for all $A\in\gl_n$ with $A+A^T=0$ since $Y$ is $\O_{2n}$-stable and therefore $R=0$. By Lemma~\ref{lm_boundedrankod}, we see that $Q=0$. 
\end{proof}

As in the previous section, the lemma shows that $X\subseteq\{0\}$. So again, when $\#\{i\mid l_i>1\}=\infty$, the only $G$-stable closed subsets of $V$ are $V$, $\{0\}$ and $\emptyset$ and the space $V$ is $G$-Noetherian.

\section{Limits of classical groups of type B}

In this last section of the proof of the Main Theorem, we still assume that $\cha(K)\neq2$. Now, we let $G$ be the direct limit of a sequence 
$$\begin{tikzcd}
\O_{2n_1+1} \arrow[hook,"\iota_1"]{r} & \O_{2n_2+1} \arrow[hook,"\iota_2"]{r} & \O_{2n_3+1} \arrow[hook,"\iota_3"]{r} & \dots
\end{tikzcd}$$
of diagonal embeddings. To prove that the corresponding inverse limit $V$ is $G$-Noetherian, it suffices to consider the case where $K$ is algebraically closed. The following proposition shows that, if $K=\overline{K}$ and $\iota_i$ has signature $(l_i,z_i)$ with $l_i$ even, then we can insert a group of type $D$ into the sequence defining $G$.

\begin{prop}
Suppose that $K$ is algebraically closed. Let $m,n\in\ZZ_{\geq0}$ be integers and let $\iota\colon\O_{2m+1}\hookrightarrow\O_{2n+1}$ be a diagonal embedding with signature $(l,z)$. If $l$ is even, then $\iota$ is the composition of diagonal embeddings $\O_{2m+1}\hookrightarrow\O_{l(2m+1)}$ and $\O_{l(2m+1)}\hookrightarrow\O_{2n+1}$.
\end{prop}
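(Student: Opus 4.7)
The plan is to realize $\O_{l(2m+1)}$ as the orthogonal group of an $\O_{2m+1}$-invariant non-degenerate subspace of the standard representation of $\O_{2n+1}$. Writing $V=K^{2n+1}$ and $W=K^{2m+1}$ for the standard representations of $\O_{2n+1}$ and $\O_{2m+1}$ respectively, the diagonal signature $(l,z)$ of $\iota$ gives an $\O_{2m+1}$-equivariant linear isomorphism $V\cong W^{\oplus l}\oplus K^{\oplus z}$ under which $\iota(A)$ acts as $\Diag(A,\dots,A,I_z)$ with $l$ blocks $A$.

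Next, I would show that the defining non-degenerate symmetric bilinear form $B$ on $V$ decomposes as an orthogonal sum of non-degenerate forms on the two summands $W^{\oplus l}$ and $K^{\oplus z}$. Since $\cha(K)\neq 2$ and $K=\overline{K}$, the representation $W$ of $\O_{2m+1}$ is irreducible, self-dual, and not isomorphic to the trivial representation $K$ (even for $m=0$, where $\O_1=\{\pm1\}$ acts on $W=K$ by the sign character, which is inequivalent to the trivial character because $\cha(K)\neq2$). By Schur's lemma, every $\O_{2m+1}$-invariant bilinear form on $W^{\oplus l}\oplus K^{\oplus z}$ splits as an orthogonal sum of an invariant form on $W^{\oplus l}$ and one on $K^{\oplus z}$, and the two restrictions of $B$ are non-degenerate because $B$ is.

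Because $l$ is even, $\dim(W^{\oplus l})=l(2m+1)$ is even, and over an algebraically closed field of characteristic $\neq 2$ all non-degenerate symmetric bilinear forms on a vector space of a given even dimension are equivalent. Hence there is a linear isomorphism $W^{\oplus l}\to K^{l(2m+1)}$ sending $B|_{W^{\oplus l}}$ to the standard form defining $\O_{l(2m+1)}$. Under this identification, the subgroup of $\O_{2n+1}$ preserving $W^{\oplus l}$ and acting as the identity on $K^{\oplus z}$ is exactly $\O_{l(2m+1)}$. The resulting inclusion $j_2\colon\O_{l(2m+1)}\hookrightarrow\O_{2n+1}$ is diagonal with signature $(1,z)$, and the map $j_1\colon\O_{2m+1}\hookrightarrow\O_{l(2m+1)}$ given by the diagonal action $A\mapsto\Diag(A,\dots,A)$ on $W^{\oplus l}$ is diagonal with signature $(l,0)$. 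By construction $j_2\circ j_1=\iota$, since $\iota(A)$ acts on $V$ as $\Diag(A,\dots,A,I_z)$.

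The main obstacle will be the Schur-type splitting of $B$ in the second paragraph: one needs that $W$ is irreducible and inequivalent to the trivial representation as an $\O_{2m+1}$-module in all cases, including the degenerate cases $m=0$ and small $l$. Once this splitting is in hand, the rest of the argument is simply recognizing the standard orthogonal structure on the even-dimensional space $W^{\oplus l}$ and reading the factorization off the orthogonal decomposition $V=W^{\oplus l}\perp K^{\oplus z}$.
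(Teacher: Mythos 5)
Your proof is correct, but it takes a different route from the paper's. The paper first invokes Lemma~\ref{lm_equiv_diag}: since any two diagonal embeddings $\O_{2m+1}\hookrightarrow\O_{2n+1}$ with the same signature differ by conjugation by an element of $\O_{2n+1}$, it suffices to exhibit a single model embedding with signature $(l,z)$ that visibly factors. The paper then constructs this model explicitly using the auxiliary anti-diagonal form $H_k$, checks the two pieces are diagonal with signatures $(l,0)$ and $(1,z)$, and is done; no direct analysis of the given $\iota$ is needed. You instead work intrinsically with the given $\iota$: you use Schur's lemma to split the defining form $B$ on $K^{2n+1}$ orthogonally along the isotypic decomposition $W^{\oplus l}\perp K^{\oplus z}$, and then use the congruence of non-degenerate symmetric forms over $\overline{K}$ (with $\cha K\neq 2$) to identify the stabilizer of the $W$-isotypic summand with $\O_{l(2m+1)}$. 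The underlying ingredients coincide --- Lemma~\ref{lm_equiv_diag} is itself proved by exactly this Schur-plus-congruence reasoning --- but the packaging differs: the paper's route is shorter given the established lemma and produces the explicit matrix form that is actually reused in the rest of Section~6, whereas your route is self-contained and avoids matrix coordinates. One small imprecision: you invoke irreducibility of $W$, but what the Schur splitting really requires is only that $W$ has no trivial sub- or quotient-representation of $\O_{2m+1}$ (equivalently, $\operatorname{Hom}_{\O_{2m+1}}(W,K)=0$), which is easy to check directly via reflections and sidesteps any worry about modular irreducibility of the natural module; your $m=0$ sign-character observation is the right kind of check and the conclusion you draw is correct.
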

\begin{proof}
By Lemma \ref{lm_equiv_diag}, it suffices to find one diagonal embedding $\iota\colon\O_{2m+1}\hookrightarrow\O_{2n+1}$ with signature $(l,z)$ for which the proposition holds. For $k\in\NN$, note that the group
$$
H_k=\left\{A\in\GL_k~\middle|~A\begin{pmatrix}&&1\\&\iddots\\1\end{pmatrix}A^T=\begin{pmatrix}&&1\\&\iddots\\1\end{pmatrix}\right\}
$$
is conjugate to $\O_k$ in $\GL_k$. The map
\begin{eqnarray*}
H_{2m+1}&\hookrightarrow&H_{l(2m+1)}\\
A&\mapsto&\Diag(A,\dots,A)
\end{eqnarray*}
induces a diagonal embedding $\O_{2m+1}\hookrightarrow\O_{l(2m+1)}$ with signature $(l,0)$. Note that $2n+1=l(2m+1)+z$ and so $z$ is odd. Write $z=2k+1$. Then the map
\begin{eqnarray*}
\O_{l(2m+1)}&\hookrightarrow&\O_{2n+1}\\
\begin{pmatrix}A&B\\C&D\end{pmatrix}&\mapsto&\begin{pmatrix}A&&&B\\&I_k\\&&1\\C&&&D\\&&&&I_k\end{pmatrix}
\end{eqnarray*}
is a diagonal embedding with signature $(1,z)$. Now, let $\iota$ be the composition of these two diagonal embeddings. Then $\iota$ is itself a diagonal embedding and has signature $(l,z)$.
\end{proof}

Suppose that $K$ is algebraically closed and that the diagonal embeddings $\iota_i$ have signatures $(l_i,z_i)$ with $l_i$ even for infinitely many $i\in\NN$. Then the proposition shows that we can replace our sequence by a supersequence in which groups of type $D$ appear infinitely many times. In this case $V$ is $G$-Noetherian by the previous section. So, even if $K$ is not algebraically closed, we only have to consider the case where this does not happen. And, by replacing our sequence by an infinite subsequence, we may assume that $l_i\in\NN$ odd for every $i\in\NN$. As both $n_i$ and $n_{i+1}=l_in_i+z_i$ are odd, this forces $z_i\in\ZZ_{\geq0}$ to be even for all $i\in\NN$. Our next task is to find diagonal embeddings with such signatures.\bigskip

First, note that for $n\in\NN$ and $z\in\ZZ_{\geq0}$ the map
\begin{eqnarray*}
\iota_{1,2z}\colon \O_{2n+1}&\hookrightarrow&\O_{2(n+z)+1}\\
\begin{pmatrix}A&\alpha&B\\\beta&\mu&\gamma\\C&\delta&D\end{pmatrix}&\mapsto&\begin{pmatrix}A&&\alpha&B&\\&I_z&&&\\\beta&&\mu&\gamma&\\C&&\delta&D&\\&&&&I_z\end{pmatrix}
\end{eqnarray*}
is a diagonal embedding with signature $(1,2z)$. Here $A,B,C,D\in\gl_n$, $\alpha,\beta^T,\gamma^T,\delta\in K^n$ and $\mu\in K$. The associated map of Lie algebras is
\begin{eqnarray*}
\pr_{1,2z}\colon \o_{2(n+z)+1}&\twoheadrightarrow&\o_{2n+1}\\
\begin{pmatrix}P&\bullet&v&Q&\bullet\\\bullet&\bullet&\bullet&\bullet&\bullet\\\phi&\bullet&0&\psi&\bullet\\R&\bullet&w&S&\bullet\\\bullet&\bullet&\bullet&\bullet&\bullet\end{pmatrix}&\mapsto&\begin{pmatrix}P&v&Q\\\phi&0&\psi\\R&w&S\end{pmatrix}
\end{eqnarray*}
with $P=-S^T,Q,R\in\gl_n$ and $v=-\psi^T,w=-\phi^T\in K^n$ such that $Q+Q^T=R+R^T=0$.\bigskip

Next, we construct a diagonal embedding $\O_{2n+1}\hookrightarrow\O_{l(2n+1)}$ with signature $(l,0)$ for all $n\in\NN$ and $l\in\NN$ odd. Write
$$
J_k=\begin{pmatrix}&&1\\&\iddots\\1\end{pmatrix}\in\GL_k
$$
for $k\in\NN$ and take
$$
H_{2n+1,l}=\left\{A\in\GL_{l(2n+1)}~\middle|~A\begin{pmatrix}&&I_{ln}\\&J_l\\I_{ln}\end{pmatrix}A^T=\begin{pmatrix}&&I_{ln}\\&J_l\\I_{ln}\end{pmatrix}\right\}
$$
for all $n\in\NN$ and $l\in\NN$ odd. Then we have
$$
P\begin{pmatrix}&&I_{ln}\\&J_l\\I_{ln}\end{pmatrix}P^T=\begin{pmatrix}&&I_{ln+k}\\&1\\I_{ln+k}\end{pmatrix}
$$
where
$$
P=\begin{pmatrix}I_{ln}\\&I_k\\&&1\\&&&&I_{ln}\\&&&J_k\end{pmatrix}
$$
is a permutation matrix. So the map
\begin{eqnarray*}
H_{2n+1,l}&\rightarrow&\O_{l(2n+1)}\\
A&\mapsto&PAP^T
\end{eqnarray*}
is an isomorphism. Consider the map
\begin{eqnarray*}
\O_{2n+1}&\hookrightarrow&H_{2n+1,l}\\
\begin{pmatrix}A&\alpha&B\\\beta&\mu&\gamma\\C&\delta&D\end{pmatrix}&\mapsto&\begin{pmatrix}A&&&\alpha&&&&&B\\&\ddots&&&\ddots&&&\iddots\\&&A&&&\alpha&B\\\beta&&&\mu&&&&&\gamma\\&\ddots&&&\ddots&&&\iddots\\&&\beta&&&\mu&\gamma\\&&C&&&\delta&D\\&\iddots&&&\iddots&&&\ddots\\C&&&\delta&&&&&D\end{pmatrix}
\end{eqnarray*}
where $A,B,C,D\in\gl_n$, $\alpha,\beta^T,\gamma^T,\delta\in K^n$ and $\mu\in K$ all occur $l$ times on the right hand side. Write $l=2k+1$.  By taking the composition of these two maps, we get a diagonal embedding $\O_{2n+1}\hookrightarrow\O_{l(2n+1)}$ with signature $(l,0)$.\bigskip

Write $J=J_l$ and consider the Lie algebra
\begin{eqnarray*}
\h_{2n+1,l}&=&\left\{P\in\gl_{l(2n+1)}~\middle|~P\begin{pmatrix}&&I_{ln}\\&J_l\\I_{ln}\end{pmatrix}+\begin{pmatrix}&&I_{ln}\\&J_l\\I_{ln}\end{pmatrix}P^T=0\right\}\\
&=&\left\{\begin{pmatrix}P&V&Q\\\Phi&U&\Psi\\R&W&S\end{pmatrix}\in\gl_{l(2n+1)}~\middle|~\begin{array}{c}P+S^T=Q+Q^T=R+R^T=0\\VJ+\Psi^T=WJ+\Phi^T=0\\UJ+JU^T=0\end{array}\right\}
\end{eqnarray*}
of $H_{2n+1,l}$. The map $\O_{2n+1}\hookrightarrow H_{2n+1,l}$ corresponds to the map $\h_{2n+1,l}\twoheadrightarrow\o_{2n+1}$ sending
$$
\begin{pmatrix}P_{11}&\dots&P_{1l}&V_{11}&\dots&V_{1l}&Q_{11}&\dots&Q_{1l}\\\vdots&&\vdots&\vdots&&\vdots&\vdots&&\vdots\\P_{l1}&\dots&P_{ll}&V_{l1}&\dots&V_{ll}&Q_{l1}&\dots&Q_{ll}\\\Phi_{11}&\dots&\Phi_{1l}&U_{11}&\dots&U_{1l}&\Psi_{11}&\dots&\Psi_{1l}\\\vdots&&\vdots&\vdots&&\vdots&\vdots&&\vdots\\\Phi_{l1}&\dots&\Phi_{ll}&U_{l1}&\dots&U_{ll}&\Psi_{l1}&\dots&\Psi_{ll}\\R_{11}&\dots&R_{1l}&W_{11}&\dots&W_{1l}&S_{11}&\dots&S_{1l}\\\vdots&&\vdots&\vdots&&\vdots&\vdots&&\vdots\\R_{l1}&\dots&R_{ll}&W_{l1}&\dots&W_{ll}&S_{l1}&\dots&S_{ll}\end{pmatrix}
$$
to
$$\begin{pmatrix}P_{11}+\dots+P_{ll}&V_{11}+\dots+V_{ll}&Q_{1l}+\dots+Q_{l1}\\\Phi_{11}+\dots+\Phi_{ll}&U_{11}+\dots+U_{ll}&\Psi_{1l}+\dots+\Psi_{l1}\\R_{1l}+\dots+R_{l1}&W_{1l}+\dots+W_{l1}&S_{11}+\dots+S_{ll}\end{pmatrix}.
$$
Here, for each entry, we either sum along the diagonal or along the anti-diagonal in a manner consistent with the definition of the map $\O_{2n+1}\hookrightarrow H_{2n+1,l}$. The map $H_{2n+1,l}\rightarrow\O_{l(2n+1)}$ corresponds to the map $\o_{l(2n+1)}\rightarrow\h_{2n+1,l}$ sending $Q$ to $P^TQP^{-T}$. \bigskip

We let the diagonal embeddings in the sequence
$$\begin{tikzcd}
\O_{2n_1+1} \arrow[hook,"\iota_1"]{r} & \O_{2n_2+1} \arrow[hook,"\iota_2"]{r} & \O_{2n_3+1} \arrow[hook,"\iota_3"]{r} & \dots
\end{tikzcd}$$
be (compositions of) the forms above. As in the previous sections, if only finitely many embeddings have signature $(l_i,2z_i)$ with $l_i>1$, then Theorem \ref{thm_mainsp} follows from \cite[Theorem 1.2]{eggermont-snowden}. So we assume that $\#\{l_i\mid l_i>1\}=\infty$. Now, by replacing our sequence by an infinite subsequence, we may assume that $l_i\in\NN$ is odd and at least $3$ for every $i\in\NN$. 

\begin{lm}\label{lm_movingeqd2}
Let $Y\subsetneq\h_{2n+1,l}$ be an $H_{2n+1,l}$-stable closed subset and let $Z$ be the closed subset
$$
\left\{\begin{pmatrix}P&V&Q\\\Phi&U&\Psi\\R&W&S\end{pmatrix}\in\h_{2n+1,l}~\middle|~P=P^T\right\}
$$
of $\h_{2n+1,l}$. Then there is a non-zero polynomial $f\in I(Y)$ whose top-graded part is not contained in the ideal of $Z$.
\end{lm}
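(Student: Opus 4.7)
The plan is to follow verbatim the structure of the proof of Lemma~\ref{lm_toppartnotinsp}. Choose any non-zero $f\in I(Y)$, which exists because $Y\subsetneq \h_{2n+1,l}$, and let $g$ be its top-graded part with respect to the natural grading on $K[\h_{2n+1,l}]$ extending the one defined in Section~2 (so that $p$-, $v$-, $u$- and $q$-coordinates have positive degree while $r$- and $w$-coordinates have degree $0$). Since $f$ is non-zero, so is $g$, and the objective is to exhibit an element $A$ of a suitable subgroup of $H_{2n+1,l}$ such that $A\cdot g\notin I(Z)$; the polynomial $A\cdot f\in I(Y)$ is then the one required by the lemma.

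For the group action, I would embed $\GL_{ln}\hookrightarrow H_{2n+1,l}$ by $A\mapsto\Diag(A,I_l,A^{-T})$. One checks that this lands in $H_{2n+1,l}$: the outer blocks satisfy $A\cdot(A^{-T})^{T}=A\cdot A^{-1}=I_{ln}$ and the middle block gives $I_l\, J_l\, I_l^{T}=J_l$. Under conjugation by $\Diag(A,I_l,A^{-T})$, an element
$$
\begin{pmatrix}P&V&Q\\ \Phi&U&\Psi\\ R&W&S\end{pmatrix}
$$
of $\h_{2n+1,l}$ gets sent to
$$
\begin{pmatrix}APA^{-1}&AV&AQA^{T}\\ \Phi A^{-1}&U&\Psi A^{T}\\ A^{-T}RA^{-1}&A^{-T}W&A^{-T}SA^{T}\end{pmatrix},
$$
so each block of coordinates is mapped into itself, and hence the grading is preserved. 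Since $Y$ is $H_{2n+1,l}$-stable, $I(Y)$ is stable under this $\GL_{ln}$-action, so for every $A\in\GL_{ln}$ the polynomial $A\cdot f$ lies in $I(Y)$ with top-graded part $A\cdot g$.

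The remaining step is to show that $\GL_{ln}\cdot Z$ is dense in $\h_{2n+1,l}$. In $Z$, the blocks $V,W,U,Q,R$ are unconstrained (beyond the defining equations of $\h_{2n+1,l}$), while $P$ is forced to be symmetric. The $\GL_{ln}$-action moves each of the other blocks within its own coordinate subspace and conjugates $P\mapsto APA^{-1}$. Because $K$ is infinite and every diagonal matrix is symmetric, the set $\{APA^{-1}:A\in\GL_{ln},\,P=P^{T}\}$ is dense in $\gl_{ln}$, exactly as in the proof of Lemma~\ref{lm_toppartnotinsp}. Combined, this gives $\overline{\GL_{ln}\cdot Z}=\h_{2n+1,l}$; since $g$ is non-zero there must be some $A\in\GL_{ln}$ with $A\cdot g\notin I(Z)$, and we are done.

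I do not expect any serious difficulty: the argument is structurally identical to that of Lemma~\ref{lm_toppartnotinsp}, the only change being the insertion of an identity block $I_l$ in the middle of the embedded $\GL_{ln}$ so as to respect the form defining $H_{2n+1,l}$. The single substantive check is that $\Diag(A,I_l,A^{-T})\in H_{2n+1,l}$, which is immediate from the shape of the defining matrix.
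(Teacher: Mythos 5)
Your proposal is correct and matches the paper's intent: the paper's proof is simply the remark that it is analogous to Lemma~\ref{lm_toppartnotinsp}, and you carry out exactly that analogy, with the one necessary adaptation being the embedding $\GL_{ln}\hookrightarrow H_{2n+1,l}$, $A\mapsto\Diag(A,I_l,A^{-T})$, which you correctly identify and verify lands in $H_{2n+1,l}$ and preserves the grading blockwise.
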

\begin{proof}
The proof is analogous to the proof of Lemma \ref{lm_toppartnotinsp}.
\end{proof}

\begin{lm}\label{lm_r489vysrs890}
Let $X$ be an $H_{2n+1,l}$-stable closed subset of $\h_{2n+1,l}$ and let $Y$ be the closure of its image in $\o_{2n+1}$. Let $f\in I(Y)\subseteq K[\o_{2n+1}]$ be a non-zero polynomial whose top-graded part $g$ is not contained in the ideal of
$$
\left\{\begin{pmatrix}P&V&Q\\\Phi&U&\Psi\\R&W&S\end{pmatrix}\in\h_{2n+1,l}~\middle|~P=P^T\right\}.
$$
Then $I(X)$ contains a non-zero polynomial with degree at most $\deg(f)$ that only depends on $R$ and two columns of $W$.
\end{lm}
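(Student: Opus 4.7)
The plan is to adapt the strategy of Lemmas~\ref{lm_movingeqsp2} and~\ref{lm_d89fg7w98} to the block structure of $\h_{2n+1,l}$. First I pull $f$ back along the projection $\pi\colon\h_{2n+1,l}\twoheadrightarrow\o_{2n+1}$ to obtain $\tilde f:=\pi^*f\in I(X)$; concretely, $\tilde f$ is the polynomial on $\h_{2n+1,l}$ obtained by substituting, for each entry of the argument of $f$, the corresponding diagonal or anti-diagonal block sum described just before Lemma~\ref{lm_movingeqd2}. Let $g$ denote the top-graded part of $f$; by hypothesis $g$ does not vanish on some element of $\h_{2n+1,l}$ with non-symmetric $P$-block.

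Next I construct a one-parameter family $A(\lambda)\in H_{2n+1,l}$ (so a conjugate under $P$ of an element of $\O_{l(2n+1)}$) such that conjugating a generic element of $\h_{2n+1,l}$ by $A(\lambda)$ modifies only a small, prescribed set of diagonal $P$-blocks and $V$-columns, by adding $\lambda$-multiples of certain $R$-blocks $R_{k\ell}$ and certain columns $W_{\cdot,\ell}$; all other entries, in particular nearly all of the $R$- and $W$-variables, are left untouched. The matrix $A(\lambda)$ is built in the same spirit as the $A(\lambda)$ of Lemma~\ref{lm_movingeqsp2}, with the block-row/column placement dictated by preservation of the middle block $J_l$ of the bilinear form defining $H_{2n+1,l}$; this is the place where the type $B$ middle row/column forces one to be careful, and the constraint ``two columns of $W$'' comes out naturally from choosing $A(\lambda)$ to be non-trivial in only two of the $l$ positions indexing the $V$- and $W$-blocks.

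Because $A(\lambda)\cdot\tilde f\in I(X)$ for every $\lambda\in K$ and $K$ is infinite, all coefficients in $\lambda$ lie in $I(X)$. The top-degree coefficient in $\lambda$ is $g$ evaluated on an element of $\o_{2n+1}$ whose $P$-, $Q$-, $v$- and associated transpose/skew entries are linear combinations of the selected $R_{k\ell}$'s and of two columns of $W$, and whose remaining entries are zero; here we use $l\geq 3$ to freely produce any desired value of those entries while keeping the other projected variables available. Since $g\notin I(Z)$, this specialized polynomial is non-zero, and it is by construction of degree at most $\deg(f)$ and a polynomial in only $R$ and two columns of $W$, as required.

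The main obstacle is the explicit construction and verification of $A(\lambda)$: I must check that it genuinely lies in $H_{2n+1,l}$ (equivalently, that its image under the conjugation $M\mapsto P^{-1}MP$ lies in $\O_{l(2n+1)}$) and that conjugation by it produces exactly the linear substitutions in the $P$- and $V$-blocks needed to recover $g$ as a top coefficient, without polluting other variables. Once this matrix is written down explicitly, the remainder of the argument is the standard top-coefficient extraction plus the non-vanishing check supplied by the hypothesis on $g$, exactly as in the preceding symplectic and even-orthogonal cases.
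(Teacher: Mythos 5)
Your plan follows the paper's strategy for the first step, but it has a genuine gap at the end: a single one-parameter family $A(\lambda)$ of block-unipotent type cannot produce a polynomial depending on only two columns of $W$. Recall that the pullback $\tilde f$ of $f$ along the projection $\h_{2n+1,l}\twoheadrightarrow\o_{2n+1}$ has as its $w$-argument the anti-diagonal sum $W_{1l}+W_{2,l-1}+\cdots+W_{l1}$, which involves all $l\geq 3$ columns of $W$. Any $A(\lambda)\in H_{2n+1,l}$ of the block upper-unitriangular form you describe (nontrivial only in the $P$-to-$S$ corner, identity on the middle block) leaves the bottom block-row $(R,W,S)$ unchanged under conjugation, so after conjugating and extracting the leading $\lambda$-coefficient, the $w$-argument is \emph{still} $W_{1l}+\cdots+W_{l1}$. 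Your assertion that the ``remaining entries are zero'' in the element on which $g$ is evaluated is therefore false: the $w$-slot of that element is a sum running over all $l$ columns of $W$, not zero, and not a combination of only two columns.

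The paper closes this gap with a \emph{second} conjugation, by a family $B(\mu)$ of the form $\Diag(I_{ln},B_0(\mu),I_{ln})$ with $B_0(\mu)$ a $\mu$-dependent unipotent element of the middle $l\times l$ orthogonal group. Conjugation by $B(\mu)$ fixes $R$ but performs $\mu$-dependent column operations on $W$ (and on $V,\Phi,\Psi,U$). Taking the top coefficient with respect to a \emph{different} grading --- now $\grad(v)=\grad(w)=1$ and $\grad(p)=\grad(q)=\grad(r)=0$ --- collapses the $l$-column dependence to the two specific blocks $W_{1l}$ and $W_{l-1,2}$, landing in columns $l$ and $2$ of $W$. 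Without an analogue of this second step (or some multi-parameter $A$ that simultaneously controls the middle block), the argument you outline stalls at a polynomial involving all $l$ columns of $W$, which does not establish the conclusion of the lemma.

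Two smaller remarks. First, the non-vanishing at the end does not follow solely from ``$g\notin I(Z)$'': after the second coefficient extraction, one must check that the top part $h$ of $g$ with respect to the new grading, after the substitution, is still non-zero. Second, you write that the two-column constraint ``comes out naturally from choosing $A(\lambda)$ to be non-trivial in only two of the $l$ positions indexing the $V$- and $W$-blocks,'' but the restriction on the $V$-blocks only controls the $v$-argument; it does nothing to the $w$-argument, which is where the obstruction sits.
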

\begin{proof}
Consider the matrix
$$
\begin{pmatrix}P_{11}&\dots&P_{1l}&V_{11}&\dots&V_{1l}&Q_{11}&\dots&Q_{1l}\\\vdots&&\vdots&\vdots&&\vdots&\vdots&&\vdots\\P_{l1}&\dots&P_{ll}&V_{l1}&\dots&V_{ll}&Q_{l1}&\dots&Q_{ll}\\\Phi_{11}&\dots&\Phi_{1l}&U_{11}&\dots&U_{1l}&\Psi_{11}&\dots&\Psi_{1l}\\\vdots&&\vdots&\vdots&&\vdots&\vdots&&\vdots\\\Phi_{l1}&\dots&\Phi_{ll}&U_{l1}&\dots&U_{ll}&\Psi_{l1}&\dots&\Psi_{ll}\\R_{11}&\dots&R_{1l}&W_{11}&\dots&W_{1l}&S_{11}&\dots&S_{1l}\\\vdots&&\vdots&\vdots&&\vdots&\vdots&&\vdots\\R_{l1}&\dots&R_{ll}&W_{l1}&\dots&W_{ll}&S_{l1}&\dots&S_{ll}\end{pmatrix}\in\h_{2n+1,l}
$$
and note that the polynomial $f=f(P,Q,R,v,w)\in I(Y)$ induces the element 
$$
f(P_{11}+\dots+P_{ll},Q_{1l}+\dots+Q_{l1},R_{1l}+\dots+R_{l1},V_{11}+\dots+V_{ll},W_{1l}+\dots+W_{l1})
$$
of $I(X)$. Consider the matrix
$$
A(\lambda)=\begin{pmatrix}I_n&&&&&&-\lambda I_n\\&\ddots\\&&I_n&&\lambda I_n\\&&&I_l\\&&&&I_n\\&&&&&\ddots\\&&&&&&I_n\end{pmatrix}\in H_{2n+1,l}
$$
for $\lambda\in K$. One can check that
$$
g(R_{1l}-R_{l1},-(R_{1l}+R_{l1}),R_{1l}+\dots+R_{l1},W_{1l}-W_{l1},W_{1l}+\dots+W_{l1})
$$
is contained in the span of
$$
A(\lambda)\cdot f(P_{11}+\dots+P_{ll},Q_{1l}+\dots+Q_{l1},R_{1l}+\dots+R_{l1},V_{11}+\dots+V_{ll},W_{1l}+\dots+W_{l1})
$$
over all $\lambda\in K$. So it is an element of $I(X)$ and its degree is at most $\deg(f)$.\bigskip

Next, consider the matrix
$$
B(\mu)=\begin{pmatrix}I_{ln}\\&1\\&\mu&\ddots\\&&&\ddots\\&&&-\mu&1\\&&&&&I_{ln}\end{pmatrix}\in H_{2n+1,l}
$$
for $\mu\in K$. Let $h(P,Q,R,v,w)$ be the top-graded part of $g$ with respect to the grading where $P,Q,R$ get grading $0$ and $v,w$ get grading $1$. Then one can check that
$$
h(R_{1l}-R_{l1},-(R_{1l}+R_{l1}),R_{1l}+\dots+R_{l1},-W_{l-1,2},W_{1l}+W_{l-1,2})
$$
is contained in the span of
$$
B(\mu)\cdot g(R_{1l}-R_{l1},-(R_{1l}+R_{l1}),R_{1l}+\dots+R_{l1},W_{1l}-W_{l1},W_{1l}+\dots+W_{l1})
$$
over all $\mu\in K$. This polynomial is contained in $I(X)$ and has degree at most $\deg(f)$. 
\end{proof}

The following proposition tells us how to use the equation we gain from Lemma~\ref{lm_movingeqd2}. Let $\GL_n$ act on $\{Q\in\gl_n\mid Q=-Q^T\}$ by $g\cdot Q=gQg^T$. Let $k\leq n$ be an integer and let $\GL_n$ act on $K^{n\times k}$ by left-multiplication.

\begin{prop}\label{prop_boundedrankd}
Let $R\in\gl_n$ be a skew-symmetric matrix and let $W\in K^{n\times k}$ be a matrix of rank $k$. Then the closure of the $\GL_n$-orbit of $(R,W)$ inside $\{Q\in\gl_n\mid Q=-Q^T\}\oplus K^{n\times k}$ contains all tuples $(Q,V)$ with $\rk(Q)\leq\rk(R)-2k$. 
\end{prop}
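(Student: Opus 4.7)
My plan is to use the $\GL_n$-invariance of the orbit closure twice: first to reduce to $W = W_0 := \binom{I_k}{0}$, and second, once a single pair $(Q_0, V)$ with $\rk(Q_0) = r - 2k$ is in the closure, to sweep out the entire target set. The initial reduction is easy: since $W$ has rank $k$, some $h \in \GL_n$ sends $W$ to $W_0$, so I may replace $(R, W)$ by $(hRh^T, W_0)$ without changing the orbit closure.

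For the degeneration, I would consider, for any target $V \in K^{n \times k}$ of rank $k$ and matrices $G_0, G_1 \in K^{n \times (n-k)}$, the one-parameter family $g_t := (V \mid G_0 + tG_1) \in K^{n \times n}$. Then $g_t W_0 = V$ for every $t$, and for generic $G_1$ the polynomial $\det(g_t)$ is not identically zero in $t$, so $g_t \in \GL_n$ for all small $t \neq 0$. A direct computation shows
$$\lim_{t \to 0} g_t R g_t^T = (V \mid G_0)\, R\, (V \mid G_0)^T,$$
whose rank equals $\rk(R|_U)$ for $U := \mathrm{rowspan}(V \mid G_0) \subseteq K^n$. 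Crucially, as $G_0$ varies over $K^{n \times (n-k)}$, $U$ can be taken to be any subspace of $K^n$ of dimension at least $k$ whose projection to the first $k$ coordinates is surjective.

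The central step will be to choose such a $U$ with $\rk(R|_U) = m := r - 2k$. Using that the maximal isotropic subspace for a rank-$r$ skew-symmetric form on $K^n$ has dimension $n - r/2$, together with $n \geq 2k$ (which follows from $r \leq n$ and $r \geq 2k$), one verifies that for $d = \max(k, m)$ the attainable even values of $\rk(R|_U)$ over $d$-dimensional subspaces $U$ of the allowed form fill the interval $[\max(0, r + 2d - 2n),\, \min(d, r)]$, and $m$ lies in this interval (the lower-bound inequality reduces to $d \leq n - k$ and the upper-bound inequality is immediate). Hence a suitable $U$, and therefore $G_0$, exists; the degeneration yields $(Q_0, V) \in \overline{\GL_n \cdot (R, W_0)}$ with $Q_0$ skew of rank exactly $m$.

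To finish, I would choose $V$ so that its last $n - m$ rows have rank $k$ (possible since $n - m \geq k$). By $\GL_n$-invariance the orbit closure contains the set $\{(hQ_0 h^T, hV) : h \in \GL_n\}$; using that $\mathrm{Stab}(Q_0) \subseteq \GL_n$ acts on $K^{n \times k}$ with dense orbit through such $V$ (the standard direct check for the stabilizer of a canonical rank-$m$ skew matrix), this set is dense in $\{(Q, V') : \rk(Q) \leq m,\, V' \in K^{n \times k}\}$, whose closure is itself. The hard part will be the geometric claim that $\rk(R|_U) = m$ is attained by some constrained $U$; this follows from an elementary dimension count on isotropic subspaces but needs care.
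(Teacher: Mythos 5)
Your strategy --- a single degeneration $g_t = (V\mid G_0 + tG_1)$ followed by a stabilizer-density argument --- is genuinely different from the paper's, which inducts on $k$, normalizes $(R,W)$ step by step inside the orbit, and then applies the explicit one-parameter degeneration $\Diag(I_{n-2},\lambda,1)$. However, the final density step of your argument fails for a concrete reason, independent of the geometric existence claim you flag yourself. Density of $\mathrm{Stab}(Q_0)\cdot V$ in $K^{n\times k}$ is equivalent to $\mathrm{im}(V)\cap\mathrm{im}(Q_0)=0$ (this is exactly what ``the last $n-m$ rows of $V$ have rank $k$'' amounts to once $Q_0$ is conjugated to the canonical rank-$m$ skew form), so it is a condition on the pair $(Q_0,V)$, not one you can impose on $V$ in advance.

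In your construction $Q_0 = g_0 R g_0^T$ with $g_0 = (V\mid G_0)$ of rank $d=\dim U$. Both $\mathrm{im}(V)$ (dimension $k$) and $\mathrm{im}(Q_0)$ (dimension $m$) are contained in the $d$-dimensional space $\mathrm{im}(g_0)$, so $\dim\bigl(\mathrm{im}(V)\cap\mathrm{im}(Q_0)\bigr)\ge k+m-d$. With your choice $d=\max(k,m)$ this is $\ge\min(k,m)$, which is strictly positive whenever $k,m>0$; thus $\mathrm{Stab}(Q_0)\cdot V$ necessarily lands in a proper subvariety of $K^{n\times k}$ and the density argument cannot close. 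Repairing this requires $d\ge k+m=\rk(R)-k$, and then one must still show that the triviality of the intersection is actually attained for a suitable $(V,G_0)$, not merely dimensionally unobstructed. On top of this, the ``hard part'' you acknowledge --- the existence of a $d$-dimensional $U$ projecting onto $K^k$ with $\rk(R|_U)=m$ --- remains unproven; for $m<k$ it is a nontrivial statement about the image of the quadratic map $\phi\mapsto A + B\phi - (B\phi)^T + \phi^T C\phi$ and does not reduce to a dimension count on isotropic subspaces. The paper's induction on $k$ sidesteps both issues by peeling off one column of $W$ and two dimensions of $R$ at a time, keeping every step an explicit orbit manipulation.
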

\begin{proof}
We will prove the proposition using induction on $k$. The case $k=0$ is well-known. So assume that $0<2k\leq\rk(R)$. Let $X$ be the closure of the $\GL_n$-orbit of $(R,W)$. Note that we may replace $(R,W)$ with any element in its $\GL_n$-orbit. Since $\rk(W)=k$, we may therefore assume that the last column of $W$ equals $e_n$. Now, if we act with a matrix of the form 
$$
\begin{pmatrix}1\\&\ddots\\&&\ddots\\a_1&\dots&a_{n-1}&1\end{pmatrix},
$$
then the last column of $W$ stays equal to $e_n$. And, the last column of $R$ becomes 
$$
\binom{a_1r_1+\dots+a_{n-1}r_{n-1}+r_n}{0}
$$
if we write
$$
R=\begin{pmatrix}
r_1&\dots&r_{n-1}&r_n\\
\bullet&\dots&\bullet&0
\end{pmatrix}
$$
with $r_1,\dots,r_n\in K^{n-1}$. As $\rk(R)>k=\rk(W)$ and $e_n$ is contained in the image of $W$, we see that 
$$
\binom{a_1r_1+\dots+a_{n-1}r_{n-1}+r_n}{0}
$$ 
is not contained in the image of $W$ for some $a_1,\dots,a_{n-1}$. So we may also assume that the last column of $R$ is not contained in the image of $W$. Next, note that the last column of $W$ stays $e_n$ and the last column of $R$ stays outside the image of $W$ if we act with a matrix of the form $\Diag(g,1)$ with $g\in\GL_{n-1}$. Since the last column of $R$ is non-zero, we may therefore assume in addition that 
$$
R=\begin{pmatrix}R'&w&0\\-w^T&0&1\\0&-1&0\end{pmatrix}
$$
for some $R'\in\gl_{n-2}$ and $w\in K^{n-2}$. So the vector $e_{n-1}$ is not contained in the image of $W$. Note that $\rk(R')\geq\rk(R)-2$. Write
$$
W=\begin{pmatrix}W'&0\\v^T&0\\u^T&1\end{pmatrix}
$$
with $W'\in K^{(n-2)\times(k-1)}$ and $u,v\in K^{k-1}$. Since $e_{n-1}$ is not contained in the image of $W$, the matrix $(W~e_{n-1})$ has rank $k+1$ and hence $\rk(W')=k-1$. The limit
$$
\lim_{\lambda\rightarrow0}\Diag(I_{n-2},\lambda,1)\cdot(R,W)=\left(\begin{pmatrix}R'&0&0\\0&0&0\\0&0&0\end{pmatrix},\begin{pmatrix}W'&0\\0&0\\u^T&1\end{pmatrix}\right)
$$
is an element of $X$. Using the induction hypothesis, we see that $X$ contains
$$
\left(\begin{pmatrix}Q&0&0\\0&0&0\\0&0&0\end{pmatrix},\begin{pmatrix}V&0\\0&0\\u^T&1\end{pmatrix}\right)
$$
for all skew-symmetric matrices $Q\in\gl_{n-2}$ of rank at most $\rk(R)-2k$ and all $V\in K^{(n-2)\times(k-1)}$. By acting with a permutation matrix, we see in particular that
$$
\left(\begin{pmatrix}Q&0&0\\0&0&0\\0&0&0\end{pmatrix},\begin{pmatrix}0&0\\I_{k-1}&0\\u^T&1\end{pmatrix} \right)\in X
$$
for all skew-symmetrix matrices $Q\in\gl_{n-k}$ of rank at most $\rk(R)-2k$. Therefore
$$
\left(\Diag(Q,0),V\right)=\lim_{\lambda\rightarrow0}\left(\Diag(I_{n-k},\lambda I_k)+\left(0~V\begin{pmatrix}I_{k-1}&0\\-u^T&1\end{pmatrix} \right)\right)\cdot \left(\begin{pmatrix}Q&0&0\\0&0&0\\0&0&0\end{pmatrix},\begin{pmatrix}0&0\\I_{k-1}&0\\u^T&1\end{pmatrix} \right)\in X
$$
for all skew-symmetrix matrices $Q\in\gl_{n-k}$ of rank at most $\rk(R)-2k$ and all matrices $V\in K^{n\times k}$. So since $X$ is $\GL_n$-stable, we see that $(Q,V)\in X$ for all skew-symmetric matrices $Q\in\gl_n$ of rank at most $\rk(R)-2k$ and all matrices $V\in K^{n\times k}$.
\end{proof}

\begin{lm}\label{lm_4t5789gyr89}
There are integers $c_0,c_1,c_2\in\NN$ such that the following holds: let $m\in\ZZ_{\geq0}$ be an integer with $c_2m\leq n$ and let $M\in\h_{2n+1,l}$ be an element such that for all matrices
$$
\begin{pmatrix}P&V&Q\\\Phi&U&\Psi\\R&W&S\end{pmatrix}\in H_{2n+1,l}\cdot M
$$
it holds that $\rk(R)\leq m$ or the first and last column of $W$ are linearly dependent. Then we have $\rk(M)\leq c_1m+c_0$.
\end{lm}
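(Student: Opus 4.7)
The plan is to mimic Lemma~\ref{lm_boundedrankod}: we bound each block of
$$M = \begin{pmatrix} P & V & Q \\ \Phi & U & \Psi \\ R & W & S \end{pmatrix} \in \h_{2n+1,l}$$
separately by conjugating $M$ with carefully chosen elements of $H_{2n+1,l}$, applying the hypothesis to the resulting orbit elements, and summing the bounds. All constants $c_0, c_1, c_2$ will depend only on $l$.

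The crucial step is bounding $\rk(R)$. Consider the subgroup
$$\Gamma = \{\Diag(g, h, g^{-T}) : g \in \GL_{ln},\ h \in \O_l(J)\} \subseteq H_{2n+1,l},$$
under which $(R, W)$ transforms as $(R, W) \mapsto (g^{-T} R g^{-1},\ g^{-T} W h^{-1})$; in particular $\rk(R)$ and $\rk(W)$ are both preserved. For $l\geq 3$ odd and $\cha(K)\neq 2$, Witt's theorem implies that $(h^{-1}e_1,h^{-1}e_l)$ runs through all pairs of isotropic vectors with $J$-pairing $1$ as $h$ varies over $\O_l(J)$; since isotropic vectors span $K^l$, whenever $\rk(W)\geq 2$ we can choose $h$ so that the first and last columns of $Wh^{-1}$ are linearly independent. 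The hypothesis then yields the dichotomy: for every $M'\in H_{2n+1,l}\cdot M$, \emph{either} $\rk(R(M'))\leq m$ \emph{or} $\rk(W(M'))\leq 1$. To convert this into a numerical bound on $\rk(R)$, we use a unipotent conjugation $M\mapsto e^N M e^{-N}$ with $N\in\h_{2n+1,l}$ supported on the $(V,\Psi)$-blocks; a direct Lie-bracket computation shows that to first order the $W$-block of $M$ is shifted by $-RN_V$ for an arbitrary $N_V\in K^{ln\times l}$ while the $R$-block is unchanged (the corrections appearing only at order $N^2$). Hence, provided $\rk(R)$ is sufficiently large compared to $m$, a generic choice of $N_V$ produces an orbit element whose $W$-block has rank $\geq 2$ while its $R$-block still has rank $>m$, violating the dichotomy. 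Quantifying this carefully yields $\rk(R)\leq c\,m+c'$ for constants depending only on $l$.

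Conjugating by the block-swap involution $A=\left(\begin{smallmatrix}0&0&I_{ln}\\0&I_l&0\\I_{ln}&0&0\end{smallmatrix}\right)$, which one checks directly preserves the form $B_{\mathrm{form}} = \left(\begin{smallmatrix}0&0&I\\0&J&0\\I&0&0\end{smallmatrix}\right)$ and so lies in $H_{2n+1,l}$, interchanges the positions of $R,Q$ and of $V,W$, so the same argument bounds $\rk(Q)$. The blocks $V,\Phi,\Psi,W$ each have a side of dimension $l$, so $\rk(V),\rk(\Phi),\rk(\Psi),\rk(W)\leq l$ trivially, and $\rk(U)\leq l$ likewise. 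To bound $\rk(P)$ (and therefore $\rk(S)=\rk(P)$), we conjugate by $I+N$ with $N\in\h_{2n+1,l}$ supported on position $(3,1)$ by a skew-symmetric matrix $N_R$, exactly as in the proof of Lemma~\ref{lm_boundedrankod}; this yields bounds on $\rk(PA-(PA)^T)$ for arbitrary skew-symmetric $A\in\gl_{ln}$ in terms of $\rk(Q),\rk(R)$, and an application of Proposition~\ref{prop_tuplerank} together with the $I_{ln}$-shift elimination argument of Lemma~\ref{lm_boundedrankod} then gives $\rk(P)\leq c''\,m+c'''$.

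Summing the block-rank estimates yields $\rk(M)\leq c_1 m+c_0$ with $c_0,c_1$ depending on $l$; the hypothesis $c_2 m\leq n$ is imposed to ensure Propositions~\ref{prop_tuplerank} and~\ref{prop_boundedrankd} apply with enough ambient room. The main obstacle is the bound on $\rk(R)$: one must carefully combine the $\Gamma$-action (which can ``rotate'' the columns of $W$ among themselves but cannot enlarge its column span) with the unipotent motion $W\mapsto W-RN_V$ (which can enlarge the column span at the cost of perturbing the other blocks in controlled ways), and verify that whenever $\rk(R)$ exceeds a linear function of $m$ the combined freedom suffices to exhibit an orbit element that violates the dichotomy. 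The bookkeeping of the higher-order corrections to $R$ under the unipotent motion, and the interplay of these corrections with the rank bounds already established on the other blocks, constitute the technical heart of the argument.
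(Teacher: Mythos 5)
Your overall strategy --- conjugating by elements of $H_{2n+1,l}$ to control each block and combining bounds --- is the same as the paper's, and your treatment of $\rk(R)$ (via the $\O_l(J)$-action on the columns of $W$ and the unipotent $W\mapsto W+RA$), of $\rk(Q)$ (via the block swap), and of $\rk(P)=\rk(S)$ (via the skew-symmetric conjugation, imitating Lemma~\ref{lm_boundedrankod}) all correspond to steps actually taken in the paper.

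There is, however, a genuine gap where you dispose of the remaining blocks. You write that $\rk(V),\rk(\Phi),\rk(\Psi),\rk(W),\rk(U)\leq l$ trivially and then that you get $\rk(M)\leq c_1m+c_0$ with $c_0,c_1$ \emph{depending on $l$}. This does not prove the lemma: the statement quantifies $c_0,c_1,c_2\in\NN$ \emph{before} $m$, $n$ and (implicitly) $l$, so these constants must be uniform. And this uniformity is essential for the application. After passing to a subsequence one only knows that each $l_i$ is odd and $\geq 3$; the $l_i$ may grow without bound. The lemma is then used to produce a \emph{single} integer bounding the ranks of all elements of every $X_i$ (exactly as $m$ is uniform in the type $\D$ argument preceding this section), which feeds into a rank-pushing argument analogous to Lemma~\ref{lm_boundedtuplerankimpliesrk=0}. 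An $l_i$-dependent bound $\rk(M)\leq c_1(l_i)m+c_0(l_i)$ does not give this. The paper avoids the issue by bounding $\rk(W),\rk(V),\rk(\Phi),\rk(\Psi),\rk(U)$ by $O(m)$ directly: it conjugates once more by the unipotent
$$
\begin{pmatrix}I_{ln}&A&-\tfrac12 AJA^T\\&I_l&-JA^T\\&&I_{ln}\end{pmatrix},
$$
reads off the new bottom-right block $T=-\tfrac12 RAJA^T-WJA^T+S$, and uses the already-proved bounds on $\rk(R)$, $\rk(S)$ and $\rk(T)$ to bound $\rk(WJA^T)$, hence $\rk(W)$; a similar computation in the top-right block bounds $\rk(AUJA^T)$ and hence $\rk(U)$. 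This extra step is unavoidable if the constants are to be independent of $l$.

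Two smaller remarks. First, your concern about ``higher-order corrections to $R$'' is unfounded: using the exact group element displayed above (rather than an exponential $e^N$ expanded to first order), the conjugate has $R$-block exactly $R$ and $W$-block exactly $W+RA$, with no error terms; the paper computes this directly. Second, Proposition~\ref{prop_boundedrankd} is not used inside this lemma's proof --- it is used upstream, to pass from the degree of the off-diagonal polynomial to the numerical hypothesis on $R$; inside the lemma only Proposition~\ref{prop_tuplerank} (and the ambient-room condition $c_2m\leq n$) is needed.
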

\begin{proof}
Let
$$
\begin{pmatrix}P&V&Q\\\Phi&U&\Psi\\R&W&S\end{pmatrix}
$$
be an element of the orbit of $M$. We assume that $c_2m\leq n$ with $c_2$ high enough and we will prove a series of claims, which together imply that $\rk(M)\leq c_1m+c_0$ for suitable $c_0,c_1\in\NN$.

\begin{itemize}
\item[(x)] We have $\rk(R)\leq m+4$.
\end{itemize}
Suppose that $\rk(R)>m$. Note that $\Diag(I_{ln},g,I_{ln})\in H_{2n+1,l}$ for all $g\in\GL_l$ with $gJg^T=J$. We have
$$
\begin{pmatrix}I_{ln}\\&g\\&&I_{ln}\end{pmatrix}\begin{pmatrix}P&V&Q\\\Phi&U&\Psi\\R&W&S\end{pmatrix}\begin{pmatrix}I_{ln}\\&g\\&&I_{ln}\end{pmatrix}^{-1}=\begin{pmatrix}P&Vg^{-1}&Q\\g\Phi&gUg^{-1}&g\Psi\\R&Wg^{-1}&S\end{pmatrix}
$$
for all $g\in\GL_l$. So we see that the first and last column of $Wg^{-1}$ are linearly dependent for all $g\in\GL_l$ with $gJg^T=J$. Using the fact that
$$
g=\begin{pmatrix}1\\&\ddots\\\lambda&&\ddots\\&&&\ddots\\&&-\lambda&&1\end{pmatrix}
$$ 
satisfies $gJg^T=J$ as long as $\lambda$ is not in the middle row together with $JJJ^T=J$, it is now easy to check that $\rk(W)\leq 2$. Next, note that
$$
\begin{pmatrix}I_{ln}&A&-\frac{1}{2}AJA^T\\&I_l&-JA^T\\&&I_{ln}\end{pmatrix}\in H_{2n+1,l}
$$
for all $A\in K^{ln\times l}$. For all $A\in K^{ln\times l}$, we have
$$
\begin{pmatrix}I_{ln}&A&-\frac{1}{2}AJA^T\\&I_l&-JA^T\\&&I_{ln}\end{pmatrix}^{-1}\begin{pmatrix}P&V&Q\\\Phi&U&\Psi\\R&W&S\end{pmatrix}\begin{pmatrix}I_{ln}&A&-\frac{1}{2}AJA^T\\&I_l&-JA^T\\&&I_{ln}\end{pmatrix}=\begin{pmatrix}\bullet&\bullet&\bullet\\\bullet&\bullet&\bullet\\R&W+RA&\bullet\end{pmatrix}
$$
and hence $\rk(W+RA)\leq 2$. So $\rk(RA)\leq 4$ and hence $\rk(R)\leq 4$.

\begin{itemize}
\item[(y)] We have $\rk(Q)\leq m+4$ and $\rk(P)=\rk(S)\leq 3(m+4)/2$.
\end{itemize}
Repeat the proof of Lemma \ref{lm_boundedrankod} and act with matrices
$$
\begin{pmatrix}&&I_{ln}\\&I_l\\I_{ln}&&A\end{pmatrix},\begin{pmatrix}I_{ln}&&B\\&I_l\\&&I_{ln}\end{pmatrix}
$$
with $A=-A^T$ and $B=-B^T$.

\begin{itemize}
\item[(z)] We have $\rk(W)=\rk(\Phi),\rk(V)=\rk(\Psi)\leq 4(m+4)$ and $\rk(U)\leq22(m+4)$.
\end{itemize}
We have 
$$
\begin{pmatrix}I_{ln}&A&-\frac{1}{2}AJA^T\\&I_l&-JA^T\\&&I_{ln}\end{pmatrix}^{-1}\begin{pmatrix}P&V&Q\\\Phi&U&\Psi\\R&W&S\end{pmatrix}\begin{pmatrix}I_{ln}&A&-\frac{1}{2}AJA^T\\&I_l&-JA^T\\&&I_{ln}\end{pmatrix}=\begin{pmatrix}\bullet&\bullet&\bullet\\\bullet&\bullet&\bullet\\\bullet&\bullet&T\end{pmatrix}
$$
with $T=-\frac{1}{2}RAJA^T-WJA^T+S$ for all $A\in K^{ln\times l}$. So $\rk(WJA^T)\leq 4(m+4)$ for all $A\in K^{ln\times l}$. So $\rk(W)=\rk(\Phi)\leq 4(m+4)$. By conjugating with
$$
\begin{pmatrix}&&I_{ln}\\&I_l\\I_{ln}\end{pmatrix}
$$  
we also see that $\rk(V)=\rk(\Psi)\leq 4(m+4)$. We have
$$
\begin{pmatrix}I_{ln}&A&-\frac{1}{2}AJA^T\\&I_l&-JA^T\\&&I_{ln}\end{pmatrix}^{-1}\begin{pmatrix}P&V&Q\\\Phi&U&\Psi\\R&W&S\end{pmatrix}\begin{pmatrix}I_{ln}&A&-\frac{1}{2}AJA^T\\&I_l&-JA^T\\&&I_{ln}\end{pmatrix}=\begin{pmatrix}\bullet&\bullet&T\\\bullet&\bullet&\bullet\\\bullet&\bullet&\bullet\end{pmatrix}
$$
with
$$
T=\begin{pmatrix}I_{ln}&-A&-\frac{1}{2}AJA^T\end{pmatrix}\begin{pmatrix}P&V&Q\\\Phi&U&\Psi\\R&W&S\end{pmatrix}\begin{pmatrix}-\frac{1}{2}AJA^T\\-JA^T\\I_{ln}\end{pmatrix}.
$$
Now, we know that $\rk(T)\leq m+4$. Also, the matrix $T$ is a sum of nine matrices: the matrix $AUJA^T$ and eight other matrices for which we have found bounds on the rank. Adding all these bounds together, we find that 
$$
\rk(AUJA^T)\leq(1+1+1+3/2+3/2+4+4+4+4)(m+4)=22(m+4)
$$ for all $A\in K^{ln\times l}$. Hence $\rk(U)\leq 22(m+4)$. \bigskip

Together (x), (y) and (z) show that
$$
\rk\begin{pmatrix}P&V&Q\\\Phi&U&\Psi\\R&W&S\end{pmatrix}\leq c_1m+c_0
$$
for some $c_0,c_1\in\NN$. So this holds in particular if we let this matrix be $M$ itself.
\end{proof}

We combine these results as in the previous section. Lemmas \ref{lm_movingeqd2} and \ref{lm_r489vysrs890} play the roles of Lemmas \ref{lm_e545ey3g} and \ref{lm_d89fg7w98} and give us off-diagonal polynomials. Then, Proposition \ref{prop_boundedrankd} with $k=2$ shows us the structure of the off-diagonal part of the matrix as a $\GL_n$-representation with the Zariski topology. From this and the degree of the off-diagonal polynomial, we get bounds on ranks of some submatrices. Lemma \ref{lm_4t5789gyr89} turns these bounds into a rank bound on the matrix itself. Finally, we find similarly to Lemma \ref{lm_boundedtuplerankimpliesrk=0} that $X\subseteq\{0\}$ and this implies that $V$ is $G$-Noetherian.

\section{Further questions}

\subsection*{Representation-inducing functors}
As stated in the introduction, many examples of infinite-dimensional spaces that are Noetherian up to the action of some group arise from taking limits of sequences after applying certain functors. So one could hope that our spaces $V$ and groups $G$ can be contructed from functors in such a way that these functors are suitably Noetherian and that this Noetherianity implies the results of this paper. Concretely, is there a class of topologically Noetherian functors from which the representations in this paper arise and do any new representations arise from such functors? 

\subsection*{Classifications for types \texorpdfstring{$\B$, $\C$ and $\D$}{B, C and D}}
Theorem \ref{thm_mainsl} classifies all $G$-stable closed subsets of $V$ when $G$ is the direct limit of diagonal embeddings between classical groups of type $\A$. One wonders whether such a classification exists for the other types. The key part of the proof of Theorem \ref{thm_mainsl} seems to be Proposition \ref{prop_pointclosure}, which gives a complete descriptions of the closures of orbits. So it would be very interesting to see whether such descriptions can be found for the other types.

\end{document}